\numberwithin{equation}{section}
\numberwithin{figure}{section}
\newlength{\baseunit}
\newtheorem{theorem}{Theorem}
\newtheorem{claim}{Claim}
\newtheorem{example}{Example}
\newtheorem{lemma}{Lemma}
\newtheorem{algorithm}{Algorithm}
\DeclareMathOperator{\rt}{Root}
\newcommand{\LA}[1]{\refstepcounter{equation}\text{(\theequation)}\label{#1}}
\newcommand{\LAQ}[2]{\begin{itemize}\item[\LA{#1}]{#2} \end{itemize}}
\numberwithin{equation}{subsection}
\begin{document}
\title{Generators for the $C^m$-closures of Ideals}
\date{\today }
\author{Charles Fefferman, Garving K. Luli}
\thanks{CF is supported by NSF Grant DMS-1608782 and AFOSR Grant
FA9550-12-1-0425. GKL is supported by NSF Grant DMS-1554733.}
\maketitle


Let $\mathscr{R}$ denote the ring of real polynomials on $\mathbb{R}^{n}$.
Fix $m\geq 0$, and let $A_{1},\cdots ,A_{M}\in \mathscr{R}$. The \underline{$%
C^{m}$-closure} of $\left( A_{1},\cdots ,A_{M}\right) $, denoted here by $%
\left[ A_{1},\cdots ,A_{M};C^{m}\right] $, is the ideal of all $f\in 
\mathscr{R}$ expressible in the form $f=F_{1}A_{1}+\cdots +F_{M}A_{M}$ with
each $F_{i}\in C^{m}\left( \mathbb{R}^{n}\right) $. See \cite{cf-luli-dfq-cm}.

In this paper we exhibit an algorithm to compute generators for $\left[
A_{1},\cdots ,A_{M};C^{m}\right] $.

More generally, fix $m\geq 0$, and let $\boldsymbol{A=}\left( A_{ij}\right) 
_{\substack{ i=1,\cdots ,N \\ j=1,\cdots ,M}}$ be a matrix of (possibly discontinuous) semialgebraic
functions on $\mathbb{R}^{n}$.

We write $\left[ \boldsymbol{A};C^{m}\right] $ to denote the $\mathscr{R}$%
-module of all polynomial vectors $\vec{f}=\left( f_{1},\cdots ,f_{N}\right) 
$ (each $f_{i}\in \mathscr{R}$) expressible in the form 
\[
f_{i}\left( x\right) =\sum_{j=1}^{M}A_{ij}\left( x\right) F_{j}\left(
x\right) \text{ }\left( i=1,\cdots ,N\right) 
\]%
with each $F_{j}\in C^{m}\left( \mathbb{R}^{n}\right) $.

In this paper, we apply the main result of \cite{cf-luli-dfq-cm} to compute generators for $%
\left[ \boldsymbol{A};C^{m}\right] $.

Along the way, we provide an algorithm to compute generators for the ideal
of all polynomials that vanish on a given semialgebraic set $E\subset 
\mathbb{R}^{n}$. Another algorithm for this task appears in \cite{mohab}.

To understand $\left[ \boldsymbol{A};C^{m}\right] $, we study differential
operators $L$, acting on vectors of functions. Our operators $L$ have the
form 
\begin{equation}
L\vec{f}\left( x\right) =\sum_{\left\vert \alpha \right\vert \leq
s}\sum_{i=1}^{N}a_{\alpha }^{i}\left( x\right) \partial ^{\alpha
}f_{i}\left( x\right) \text{ for }\vec{f}=\left( f_{1},\cdots ,f_{N}\right) 
\text{,}  \label{i-1}
\end{equation}%
where the coefficients $a_{\alpha }^{i}$ are (possibly discontinuous)
semialgebraic functions on $\mathbb{R}^n$. We call an operator of the form (\ref{i-1}) a 
\underline{semialgebraic differential operator}.

Given a semialgebraic differential operator $L$, we introduce the $\mathscr{R}$%
-module $\mathcal{M}\left( L\right) $, consisting of all polynomial vectors $%
\vec{P}=\left( P_{1},\cdots ,P_{N}\right) $ (each $P_{i}\in \mathscr{R}$)
such that $L\left( Q\vec{P}\right) =0$ on $\mathbb{R}^{n}$ for all $Q\in 
\mathbb{\mathscr{R}}$.

Our interest in $\mathcal{M}\left( L\right) $ arises from the following
result, proven in \cite{cf-luli-dfq-cm}. 

\begin{theorem}
\label{theorem-Ac}Given $m$ and $\boldsymbol{A}$, there exist semialgebraic
differential operators $L_{1},\cdots ,L_{K}$ such that $\left[ \boldsymbol{A}%
;C^{m}\right] =\mathcal{M}\left( L_{1}\right) \cap \cdots \cap \mathcal{M}%
\left( L_{K}\right) $. Moreover, the operators $L_{1},\cdots ,L_{K}$ can be
computed from $\boldsymbol{A}$ and $m$.
\end{theorem}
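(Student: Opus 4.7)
The proof rests on the algorithmic content of the main theorem of \cite{cf-luli-dfq-cm}, which (as I would read it) characterizes membership in $[\boldsymbol{A};C^m]$ locally: a polynomial vector $\vec{P}$ lies in $[\boldsymbol{A};C^m]$ if and only if, at every $x\in\mathbb{R}^{n}$, its Taylor jet $J_{x}^{D}\vec{P}$ (for some $D$ determined by $m$, $M$, $N$) belongs to a linear subspace $H(x)$ of the jet space that encodes the obstructions to expressing $\vec{P}$ locally as $\boldsymbol{A}\vec{F}$ with $\vec{F}\in C^{m}$. The plan is to convert the parametrized family $\{H(x)\}_{x\in\mathbb{R}^{n}}$ into finitely many semialgebraic differential operators in the sense of (\ref{i-1}).

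First, I would combine Tarski--Seidenberg elimination with a cylindrical semialgebraic cell decomposition of $\mathbb{R}^{n}$ adapted to $\boldsymbol{A}$ to produce, on each cell, an explicit semialgebraic basis for the annihilator of $H(x)$ inside the jet space. This gives semialgebraic functions $a_{\alpha}^{k,i}(x)$ such that the linear forms $\vec{\xi}\mapsto \sum_{|\alpha|\le D}\sum_{i}a_{\alpha}^{k,i}(x)\,\xi_{\alpha}^{i}$ cut out $H(x)$ pointwise. Pasting these cellwise descriptions together yields finitely many globally defined operators $L_{1},\ldots,L_{K}$ of the form (\ref{i-1}). By construction $L_{k}\vec{P}\equiv0$ whenever $\vec{P}\in[\boldsymbol{A};C^{m}]$, and since $[\boldsymbol{A};C^{m}]$ is closed under polynomial multiplication, the same holds for $L_{k}(Q\vec{P})$ for every $Q\in\mathscr{R}$; hence $[\boldsymbol{A};C^{m}]\subseteq\bigcap_{k}\mathcal{M}(L_{k})$.

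For the reverse inclusion, I would exploit the quantifier ``for all $Q\in\mathscr{R}$'' in the definition of $\mathcal{M}(L_{k})$. If $\vec{P}\in\bigcap_{k}\mathcal{M}(L_{k})$, then choosing polynomials $Q$ whose Taylor data is concentrated at a prescribed base point $x$ extracts the full pointwise jet-level condition $J_{x}^{D}\vec{P}\in H(x)$ at every $x\in\mathbb{R}^{n}$; by the \cite{cf-luli-dfq-cm} characterization this forces $\vec{P}\in[\boldsymbol{A};C^{m}]$.

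The main obstacle, and the step that will absorb most of the work, is the \emph{finiteness and effective computability} of the list $L_{1},\ldots,L_{K}$: one must argue that the cellwise linear descriptions of $H(x)$ can be assembled into only finitely many globally semialgebraic operators (not one per cell of an ever-refining decomposition), that the descending chain of $\mathscr{R}$-modules $\bigcap_{k\le r}\mathcal{M}(L_{k})$ stabilizes (Noetherianity of polynomial modules), and that the whole assembly is driven by an algorithm whose input is $(\boldsymbol{A},m)$ and whose halting is guaranteed by a quantitative bound tied to $D$, $n$, and the complexity of $\boldsymbol{A}$.
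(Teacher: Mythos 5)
The paper does not prove this theorem. Immediately after stating Theorem \ref{theorem-Ac}, the authors write that it is ``proven in \cite{cf-luli-dfq-cm}'' --- it is imported verbatim as the starting point for the algorithmic work that follows, and the entire content of the present paper is the computation of generators for $\mathcal{M}(L)$, \emph{given} the reduction furnished by Theorem \ref{theorem-Ac}. So there is no in-paper proof against which to compare your sketch; the correct answer to ``what is the paper's proof of this statement?'' is a citation, not an argument.

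Beyond that procedural point, the sketch has a substantive gap even judged on its own terms: it assumes, as given, that $[\boldsymbol{A};C^{m}]$ is characterized by a pointwise jet-level condition $J_{x}^{D}\vec{P}\in H(x)$ for a semialgebraically-varying family of linear subspaces $H(x)$. But such a local-to-global statement --- that a vector of polynomials locally expressible as $\boldsymbol{A}\vec{F}$ with controlled jets is globally expressible with $\vec{F}\in C^{m}(\mathbb{R}^{n})$ --- is precisely the deep Whitney-extension-type content that \cite{cf-luli-dfq-cm} supplies, and it is essentially equivalent to Theorem \ref{theorem-Ac} itself (the family of operators $L_{1},\ldots,L_{K}$ \emph{is} the encoding of the family $H(x)$). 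Your argument therefore presupposes what it is meant to establish. Two smaller issues: (i) the reverse inclusion $\bigcap_{k}\mathcal{M}(L_{k})\subseteq[\boldsymbol{A};C^{m}]$ is dispatched by a remark about ``choosing $Q$ concentrated at a base point,'' but if the $L_{k}$ already cut out $H(x)$ pointwise, the choice $Q=1$ suffices and the role of the $Q$-quantifier in the definition of $\mathcal{M}(L)$ is left unexplained --- this quantifier is not decorative, and a proof must account for why it is built into $\mathcal{M}(L)$; (ii) the ``pasting cellwise descriptions together'' step is asserted rather than argued, and since $\dim H(x)$ can jump across strata, producing a single finite list of globally defined semialgebraic operators from per-cell linear data requires a real argument, not just an appeal to Noetherianity.
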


The main result of this paper is an algorithm to compute generators for $%
\mathcal{M}\left( L\right) $, given an arbitrary semialgebraic differential
operator $L$. Once we know generators for each $\mathcal{M}\left( L_{\nu
}\right) $ $\left( \nu =1,\cdots ,K\right) $ as in Theorem \ref{theorem-Ac},
standard computational algebra \cite{grobner,cox} allows us to compute generators for $%
\mathcal{M}\left( L_{1}\right) \cap \cdots \cap \mathcal{M}\left( L_{K
}\right) $. Thanks to Theorem \ref{theorem-Ac}, the task of exhibiting
generators for $\left[ \boldsymbol{A};C^{m}\right] $ is thus reduced to the
task of computing generators for $\mathcal{M}\left( L\right) $.

The problem of computing the $C^{0}$-closure $\left[ \boldsymbol{A};C^{0}\right] $ was posed by Brenner \cite{Brenner}, and
Epstein-Hochster \cite{Hochster}, and solved by Fefferman-Koll\'ar \cite%
{Feff-Kollar} and Koll\'ar \cite{kollar}. See also \cite{cf-luli-dfq-cm}. So our results on  $\left[ \boldsymbol{A};C^{m}\right] $ are new only for $%
m\geq 1$.

To explain the ideas in our computation of $\mathcal{M}\left( L\right) $, we
consider in turn several examples of increasing complexity.

\begin{example}
\label{ex-i1}Let $Lf\left( x\right) =\mathbb{I}_{E}\left( x\right) f\left(
x\right) $, where $E\subset \mathbb{R}^{n}$ is semialgebraic. and $\mathbb{I}%
_{E}$ denotes the indicator function of $E$.
\end{example}

Thus, $L$ is a $0^{th}$ order operator acting on scalar functions $f$.

Then $\mathcal{M}\left( L\right) $ is the ideal $I\left( E\right) $ of all
polynomials that vanish on $E$. To get a hint of the issues that arise, let $%
E_{a}=\left\{ \left( x,y,z\right) \in \mathbb{R}^{3}:x^{2}-zy^{2}=0,z\leq
a\right\} $ for all $a\in \mathbb{R}$.

If $a>0$, one checks that $I\left( E_{a}\right) $ is the principal ideal
generated by $x^{2}-zy^{2}$. However, if $a < 0$, then $E_{a}=\left\{
\left( 0,0,z\right) :z\leq a\right\} $ and $I\left( E_{a}\right) $ is the
ideal generated by $x$ and $y$.

Remarkably, there are no standard algorithms to compute generators for $%
I\left( E\right) $ given a semialgebraic set $E$. Safey El Din et al \cite{mohab} produce an
algorithm that does the job. Here, we introduce a simpler but less efficient
algorithm than that of \cite{mohab} to compute generators for $I\left( E\right) $,
using a geometric idea to reduce matters to standard algorithms.

Our algorithm for $I\left( E\right) $ proceeds as follows. First, we
partition $E$ into finitely many simple smooth pieces $E_{\nu }$ $\left( \nu
=1,\cdots ,\nu _{\max }\right) $.

To each $E_{\nu }\subset \mathbb{R}^{n}$ we associate a complex variety $%
V_{\nu }\mathbb{\subset \mathbb{C}}^{n}$ (the \textquotedblleft
complexification" of $E_{\nu }$) and show that the polynomials vanishing on $%
E_{\nu }$ are precisely those that vanish on $V_{\nu }$. 

This reduces the computation of generators for $I\left( E_{\nu }\right) $ to
the corresponding problem for complex affine varieties, which is
well-understood (See \cite{grobner,cox}). Because $E$ is the union of the $E_{\nu }$, we have $%
I\left( E\right) =I\left( E_{1}\right) \cap \cdots \cap I\left( E_{\nu
_{\max }}\right) $. Once we know generators for each $I\left( E_{\nu
}\right) $, standard algorithms \cite{grobner,cox} produce generators for their
intersection. So we can compute generators for $\mathcal{M}\left( L\right) $
in Example \ref{ex-i1}.

\begin{example}
\label{ex-i2}Suppose our operator $L$ has polynomial coefficients, i.e., the 
$a_{\alpha }^{i}$ in $\left( \ref{i-1}\right) $ are polynomials.
\end{example}

Then we can easily prove the following assertion.

\begin{claim}
\label{claim-i1}$\mathcal{M}\left( L\right) $ consists of all polynomial
vectors $\left( P_{1},\cdots ,P_{N}\right) $ that solve a system of linear
equations 
\begin{equation}
\sum_{i=1}^{N}A_{\nu }^{i}\left( x\right) P_{i}\left( x\right) =0\text{ }%
\left( \nu =1,\cdots ,\nu _{\max }\right)   \label{i-2}
\end{equation}%
with polynomial coefficients $A_{\nu }^{i}$. Moreover, the $A_{\nu }^{i}$
may be computed from $L$.
\end{claim}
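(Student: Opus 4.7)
The plan is to first reduce the defining condition ``$L(Q\vec{P})\equiv 0$ for every $Q\in\mathscr{R}$'' to finitely many polynomial identities via the Leibniz rule, and then upgrade these identities---which a priori involve derivatives of the $P_i$---to an honest $\mathscr{R}$-linear system of the form $\sum_i A_\nu^i P_i=0$ by proving that $\mathscr{R}^N/\mathcal{M}(L)$ is torsion-free.

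First I would apply Leibniz to write
\[
L(Q\vec{P})(x)=\sum_{|\beta|\le s}\partial^\beta Q(x)\cdot R_\beta(\vec{P})(x),\qquad R_\beta(\vec{P})(x):=\sum_{i}\sum_{\alpha\ge\beta}\binom{\alpha}{\beta}a_\alpha^i(x)\,\partial^{\alpha-\beta}P_i(x).
\]
Choosing $Q(y)=(y-x_0)^\beta/\beta!$ prescribes the jet $(\partial^\beta Q(x_0))_{|\beta|\le s}$ freely at any point $x_0$, so $\vec{P}\in\mathcal{M}(L)$ is equivalent to the finite list of polynomial identities $R_\beta(\vec{P})\equiv 0$ for $|\beta|\le s$. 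Each $R_\beta(\vec P)$ is a polynomial whose expression still involves derivatives of the $P_i$, so these are not yet of the stated form.

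The crux of the proof will be showing that $\mathscr{R}^N/\mathcal{M}(L)$ is a torsion-free $\mathscr{R}$-module. Applying Leibniz a second time gives the identity
\[
R_\beta(Q_0\vec{P})=\sum_{\delta}\binom{\beta+\delta}{\beta}(\partial^\delta Q_0)\,R_{\beta+\delta}(\vec{P}),
\]
with the convention $R_\gamma=0$ for $|\gamma|>s$. I would then argue by reverse induction on $|\beta|$: if $Q_0\vec{P}\in\mathcal{M}(L)$ with $Q_0\ne 0$, the top case $|\beta|=s$ collapses to $Q_0 R_\beta(\vec{P})=0$, forcing $R_\beta(\vec{P})=0$ since $\mathscr{R}$ is a domain; each lower order then follows because, by the inductive hypothesis, the only surviving term in the sum is again $Q_0R_\beta(\vec{P})$. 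This is precisely the point where the polynomial-coefficient hypothesis enters---passing from a pointwise vanishing $Q_0 R_\beta\equiv 0$ to $R_\beta\equiv 0$ requires $R_\beta(\vec P)$ to be a polynomial---and I expect it to be the main technical step.

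With torsion-freeness in hand, the remainder is formal: any finitely generated torsion-free module $M$ over the Noetherian domain $\mathscr{R}$ embeds in a finite free module (embed $M\hookrightarrow M\otimes_{\mathscr{R}}\mathrm{Frac}(\mathscr{R})$, pick a basis of the latter, and clear denominators on the images of a generating set). Applied to $M=\mathscr{R}^N/\mathcal{M}(L)$, this yields an $\mathscr{R}$-linear map $T\colon\mathscr{R}^N\to\mathscr{R}^{\nu_{\max}}$ with $\ker T=\mathcal{M}(L)$, whose matrix entries are the desired polynomials $A_\nu^i$ in~\rf{i-2}. Effectivity is routine computational commutative algebra: the $R_\beta$ are explicit in $L$; module generators of $\mathcal{M}(L)$ are extracted from the conditions $R_\beta(\vec P)\equiv 0$ by Gr\"obner-basis linear algebra on a sufficiently high-degree truncation (termination guaranteed by Noetherianity); and the torsion-free embedding is then a standard computation.
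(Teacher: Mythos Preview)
Your existence argument is correct but takes a genuinely different route from the paper. Both you and the paper begin by testing against $Q=(x-x_0)^\gamma/\gamma!$, but from that point the paper simply inducts on the order $s$: one rewrites
\[
L(\vec f)=\sum_{|\alpha|=s}\partial^\alpha\Bigl\{\textstyle\sum_i a_\alpha^i f_i\Bigr\}+\tilde L(\vec f),\qquad \text{ord}(\tilde L)\le s-1,
\]
observes that the choice $|\gamma|=s$ forces $\sum_i a_\gamma^i P_i=0$, and then notes that on this locus $L(Q\vec P)=\tilde L(Q\vec P)$, so $\mathcal{M}(L)=\{\vec P:\sum_i a_\gamma^i P_i=0\ (|\gamma|=s)\}\cap\mathcal{M}(\tilde L)$. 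Iterating, the $A_\nu^i$ are literally the successive top-order coefficients, read off directly from $L$; no torsion-freeness, no embedding into a free module, and effectivity is immediate. Your torsion-freeness observation is a nice structural fact (and your reverse-induction proof of it is clean), but it is not needed: the paper's induction is more elementary and yields the $A_\nu^i$ explicitly.

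Where your proposal is thin is the clause ``the $A_\nu^i$ may be computed from $L$.'' Your sketch says to extract generators of $\mathcal{M}(L)$ by Gr\"obner linear algebra on a ``sufficiently high-degree truncation (termination guaranteed by Noetherianity),'' and then compute the free embedding. Noetherianity guarantees the ascending chain $M_D=\langle\mathcal{M}(L)\cap\mathscr{R}^N_{\le D}\rangle$ stabilizes, but it does not by itself give you a computable stopping criterion: $M_D=M_{D+1}$ does not imply $M_D=\mathcal{M}(L)$. One can repair this (e.g.\ once $\operatorname{rank}M_D$ stabilizes, use torsion-freeness to show $\mathcal{M}(L)=\operatorname{sat}(M_D)$, which is computable---though detecting rank stabilization has the same issue), but none of this is ``routine,'' and the paper's direct induction avoids the problem entirely.
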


Standard computational algebra \cite{grobner,cox} produces generators for the $\mathscr{R}$%
-module of polynomial vectors satisfying (\ref{i-2}).

To prove Claim \ref{claim-i1} we proceed by induction on $s$, the order of
the operator $L$. In the base case $s=0$ there is nothing to prove. 

For the induction step, we fix $s\geq 1$, assume Claim \ref{claim-i1} for
operators of order less than $s$, and suppose $L$ is given by (\ref{i-1}),
with polynomial coefficients. We can then write $L$ in the form 
\begin{equation}
L\left( f_{1},\cdots ,f_{N}\right) \left( x\right) =\sum_{\left\vert \alpha
\right\vert = s}\partial ^{\alpha }\left\{ \sum_{i=1}^{N}a_{\alpha
}^{i}f_{i}\right\} \left( x\right) +\tilde{L}\left( f_{1},\cdots
,f_{N}\right) \left( x\right) \text{,}  \label{i-3}
\end{equation}%
where $\tilde{L}$ has polynomial coefficients and order less than $s$.

By definition, a polynomial vector $\vec{P}=\left( P_{1},\cdots
,P_{N}\right) $ belongs to $\mathcal{M}\left( L\right) $ if and only if $%
L\left( Q\vec{P}\right) =0$ on $\mathbb{R}^{n}$ for every polynomial $Q$.

For fixed $x_{0}\in \mathbb{R}^{n}$ and fixed multiindex $\gamma $ of order $%
\left\vert \gamma \right\vert =s$, we take $Q\left( x\right) =\frac{1}{\gamma
!}\left( x-x_{0}\right) ^{\gamma }$. Substituting into (\ref{i-3}), we see
that 
\[
L\left( Q\vec{P}\right) \left( x_{0}\right) =\sum_{i=1}^{N}a_{\gamma
}^{i}\left( x_{0}\right) \cdot P_{i}\left( x_{0}\right) \text{.}
\]%
Therefore, if $\vec{P}=\left( P_{1},\cdots ,P_{N}\right) $ belongs to $%
\mathcal{M}\left( L\right) $, then 
\begin{equation}
\sum_{i=1}^{N}a_{\gamma }^{i}P_{i}=0\text{ on }\mathbb{R}^{n}\text{ for }%
\left\vert \gamma \right\vert =s;  \label{i-4}
\end{equation}%
consequently, for any $Q\in \mathscr{R}$ we have 
\[
0=L\left( Q\vec{P}\right) =\sum_{\left\vert \alpha \right\vert =
s}\partial ^{\alpha }\left\{ Q\sum_{i=1}^{N}a_{\alpha }^{i}P_{i}\right\} +%
\tilde{L}\left( Q\vec{P}\right) =\tilde{L}\left( Q\vec{P}\right) \text{.}
\]%
Thus, if $\vec{P}$ belongs to $\mathcal{M}\left( L\right) $, then $\left( %
\ref{i-4}\right) $ holds, and 
\begin{equation}
\vec{P}\in \mathcal{M}\left( \tilde{L}\right) \text{.}  \label{i-5}
\end{equation}

Conversely,  $\left( \ref{i-4}\right) $ and  $\left( \ref{i-5}\right) $
obviously imply $\vec{P}\in \mathcal{M}\left( L\right) $.

So $\mathcal{M}\left( L\right) $ consists of all $\vec{P}\in \mathcal{M}%
\left( \tilde{L}\right) $ satisfying $\left( \ref{i-4}\right) $.

Claim \ref{claim-i1} for $L$ now follows at once from Claim \ref{claim-i1}
for $\tilde{L}$, completing our induction on $s$.

So we can compute generators for $\mathcal{M}\left( L\right) $ in Example %
\ref{ex-i2}.

\begin{example}
\label{ex-i3} (Generalizes Example \ref{ex-i2}.) Let $\Gamma =\left\{ \left(
x,G\left( x\right) \right) :x\in U\right\} $, where $U\subset \mathbb{R}^{n}$
 is an open semialgebraic set and $G:U\rightarrow \mathbb{R}^{p}is$ a
real-analytic semialgebraic function. We write $\left( x,y\right) $ to
denote a point of $\mathbb{R}^{n}\times \mathbb{R}^{p}$. For this example, $%
\mathscr{R}$ denotes the ring of polynomials on $\mathbb{R}^{n}\times 
\mathbb{R}^{p}$. Let $L$ be a differential operator of the form 
\begin{equation}
L\left( f_{1},\cdots ,f_{N}\right) \left( x,y\right) =\sum_{\left\vert
\alpha \right\vert ,\left\vert \beta \right\vert \leq
s}\sum_{i=1}^{N}a_{\alpha \beta }^{i}\left( x,y\right) \partial _{x}^{\alpha
}\partial _{y}^{\beta }f_{i}\left( x,y\right) \label{i-6}
\end{equation}

with polynomial coefficients $a_{\alpha \beta }^{i}\left( x,y\right) $, and
let 
\[
L^{\Gamma }\vec{f}\left( x,y\right) =\mathbb{I}_{\Gamma }\left( x,y\right)
\cdot L\vec{f}\left( x,y\right) \text{.}
\]
\end{example}

Thus, $\mathcal{M}\left( L^{\Gamma }\right) $ is the $\mathscr{R}$-module of
all polynomial vectors $\vec{P}=\left( P_{1},\cdots ,P_{N}\right) $ (each $%
P_{i}\in \mathscr{R}$), such that 
\begin{equation}
L\left( Q\vec{P}\right) =0\text{ on }\Gamma \text{ for all }Q\in \mathscr{R}%
\text{.}  \label{i-7}
\end{equation}

We write $\mathcal{M}\left( L,\Gamma \right) $ to denote the $\mathscr{R}$%
-module of all $\vec{P}$ that satisfy (\ref{i-7}).

This generalizes Example \ref{ex-i2}, which arises here as the case $p=0$.

The analogue of Claim \ref{claim-i1} here is as follows.

\begin{claim}
\label{claim-i2}There exist differential operators $L_{1},\cdots ,L_{\nu
_{\max }}$, of the form 
\[
L_{\nu }\left( f_{1},\cdots ,f_{N}\right) \left( x,y\right)
=\sum_{\left\vert \beta \right\vert \leq s}\sum_{i=1}^{N}b_{\nu \beta
}^{i}\left( x,y\right) \partial _{y}^{\beta }f_{i}\left( x,y\right) \text{ }(%
\text{each }b_{\nu \beta }^{i}\in \mathscr{R}),
\]%
such that 
\[
\mathcal{M}\left( L^{\Gamma }\right) =\mathcal{M}\left( L_{1},\Gamma \right)
\cap \cdots \cap \mathcal{M}\left( L_{\nu _{\max }},\Gamma \right) \text{.}
\]%
Moreover, we can compute the $L_{\nu }$ from $L$ and $\Gamma $.
\end{claim}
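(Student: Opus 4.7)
The plan is to argue by induction on $s_{x}$, the largest $|\alpha|$ for which some coefficient $a_{\alpha,\beta}^{i}$ appearing in $L$ is not identically zero. When $s_{x}=0$, the operator $L$ is already of the required $y$-only form and we take $L_{1}:=L$.

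For the inductive step $s_{x}\ge 1$, the first move is to extract $y$-only operators from the top-$x$-order part of $L$. For each multiindex $\gamma$ with $|\gamma|=s_{x}$, define
\[
L_{\gamma}\vec{f}(x,y):=\sum_{|\beta|\le s}\sum_{i=1}^{N}a_{\gamma,\beta}^{i}(x,y)\,\partial_{y}^{\beta}f_{i}(x,y).
\]
Substituting the test function $Q(x,y)=\frac{(x-x_{0})^{\gamma}}{\gamma!}Q_{0}(x,y)$, for arbitrary $Q_{0}\in\mathscr{R}$ and $(x_{0},y_{0})\in\Gamma$, into $L(Q\vec{P})(x_{0},y_{0})$ and expanding via Leibniz, nonvanishing contributions require $\gamma\le\alpha$ componentwise; since $|\gamma|=s_{x}$ is maximal in $L$, only the term $\alpha=\gamma$ survives, and a direct calculation yields $L(Q\vec{P})(x_{0},y_{0})=L_{\gamma}(Q_{0}\vec{P})(x_{0},y_{0})$. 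Hence $\mathcal{M}(L^{\Gamma})\subseteq\mathcal{M}(L_{\gamma},\Gamma)$ for every $|\gamma|=s_{x}$, and each $L_{\gamma}$ is $y$-only with polynomial coefficients, computable from $L$.

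The main obstacle is the next step: assuming $\vec{P}\in\bigcap_{|\gamma|=s_{x}}\mathcal{M}(L_{\gamma},\Gamma)$, we need to produce a differential operator $\tilde{L}$ with polynomial coefficients and $x$-order strictly less than $s_{x}$ such that
\[
\mathcal{M}(L^{\Gamma})=\mathcal{M}(\tilde{L}^{\Gamma})\cap\bigcap_{|\gamma|=s_{x}}\mathcal{M}(L_{\gamma},\Gamma).
\]
The divergence-form argument used in the proof of Claim \ref{claim-i1} does not carry over directly, because a polynomial vanishing on $\Gamma$ need not have its $\partial_{x}$-derivatives vanishing on $\Gamma$. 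To circumvent this, I exploit the semialgebraic structure of $\Gamma$: using the algorithm for Example \ref{ex-i1}, compute polynomial generators $H_{1},\dots,H_{r}$ of the ideal $I(\Gamma)\subset\mathscr{R}$. Differentiating the defining identities $H_{k}(x,G(x))\equiv 0$ by the chain rule gives relations $\partial_{x_{i}}H_{k}+\sum_{j}\partial_{x_{i}}G_{j}\,\partial_{y_{j}}H_{k}\equiv 0$ on $\Gamma$. After clearing denominators with a suitable Jacobian minor of the $H_{k}$ with respect to $y$, these relations allow us to rewrite $\partial_{x}$-derivatives of polynomials, modulo $I(\Gamma)$, as polynomial combinations of $\partial_{y}$-derivatives together with derivations of Jacobian-bracket type $(\partial_{y}H)\partial_{x_{i}}-(\partial_{x_{i}}H)\partial_{y}$ that preserve $I(\Gamma)$. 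Substituting these identities into the top-$x$-order terms of $L$ and collecting the remainder produces the desired $\tilde{L}$.

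With $\tilde{L}$ in hand, the inductive hypothesis supplies $y$-only operators $\tilde{L}_{1},\dots,\tilde{L}_{\mu_{\max}}$ with $\mathcal{M}(\tilde{L}^{\Gamma})=\bigcap_{\mu}\mathcal{M}(\tilde{L}_{\mu},\Gamma)$. Combining with the reduction above,
\[
\mathcal{M}(L^{\Gamma})=\bigcap_{|\gamma|=s_{x}}\mathcal{M}(L_{\gamma},\Gamma)\,\cap\,\bigcap_{\mu}\mathcal{M}(\tilde{L}_{\mu},\Gamma),
\]
which is the desired decomposition. Every $L_{\nu}$ in the final list is computable from $L$ and $\Gamma$: the top-order extraction is given by the explicit Leibniz formulas above, and the construction of $\tilde{L}$ rests on the computable generators of $I(\Gamma)$ produced by the algorithm for Example \ref{ex-i1}.
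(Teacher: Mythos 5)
Your first step---extracting, for each top-order multiindex $\gamma$ with $|\gamma|=s_x$, the $y$-only operator $L_\gamma$ via the test functions $Q=\tfrac{(x-x_0)^\gamma}{\gamma!}Q_0$---is correct, and the computation showing $L(Q\vec{P})(x_0,y_0)=L_\gamma(Q_0\vec{P})(x_0,y_0)$ is sound. You also correctly identify why the divergence-form trick from Claim~\ref{claim-i1} fails here, and the idea of using tangent derivations built from defining polynomials of $\Gamma$ is indeed the right direction.

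However, the decisive step is where the proposal has a genuine gap. You assert the existence of a \emph{single} operator $\tilde{L}$ of strictly lower $x$-order for which
$\mathcal{M}(L^\Gamma)=\mathcal{M}(\tilde{L}^\Gamma)\cap\bigcap_{|\gamma|=s_x}\mathcal{M}(L_\gamma,\Gamma)$,
but you never construct $\tilde{L}$ or verify either inclusion. The difficulty is real: the $L_\gamma$ you extracted via test-function substitution are \emph{not} the coefficients appearing when you rewrite $\Delta^D L$ in terms of powers of tangent vector fields, so the two procedures (Leibniz extraction vs.\ tangent-field substitution) do not automatically match up. Simply ``substituting the identities into the top-order terms and collecting the remainder'' does not yield an operator identity between $L$, the $L_\gamma$, and some $\tilde{L}$ from which the claimed module equality can be read off; you would need to control the commutators $[\Delta^s,X_{j_1}\cdots X_{j_t}]$ and prove a separate statement about how vanishing of the $y$-only pieces on $\Gamma$ propagates to vanishing of $L(Q\vec{P})$ on $\Gamma$. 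The paper does this (Section~4) by: (i) using the structured relations $\tilde{P}_\mu(x,y_\mu)=0$ produced by the graph-decomposition algorithm, so the $y$-Jacobian is diagonal and the tangent fields $X_j=\Delta\partial_{x_j}+Y_j$ have explicit polynomial form; (ii) proving a commutator lemma and assembling the full identity $[\Delta]^D L = \sum_{|\alpha|\le M}X^\alpha L_\alpha$ in one pass, rather than one $x$-order at a time; and (iii) proving by induction on $M_\ast$, with test functions $(x-x^0)^\alpha Q$ and the tangency of the $X_j$, that $L(Q\vec{P})=0$ on $V$ for all $Q$ is equivalent to $L_\alpha(Q\vec{P})=0$ on $V$ for all $Q$ and $\alpha$. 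Your sketch omits all three of these, and in particular never establishes the module equality it relies on.

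Two further, smaller issues. First, using arbitrary generators $H_1,\dots,H_r$ of $I(\Gamma)$ rather than the $\tilde P_\mu(x,y_\mu)$ requires additional work (selecting a nonsingular $y$-Jacobian minor, proving the resulting $\Delta$ is nonvanishing on a dense subset of $\Gamma$), which is not addressed. Second, the density of $\Gamma\setminus\{\Delta=0\}$ in $\Gamma$---needed to pass from $\Delta^D L(Q\vec P)=0$ to $L(Q\vec P)=0$---is never mentioned, but is an essential ingredient.
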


The point is that the $L_{\nu }$ involve no $x$-derivatives.

By using our algorithm to compute $I\left( E\right) $ (see Example \ref%
{ex-i1}), we can reduce the computation of generators for each $\mathcal{M}%
\left( L_{\nu },\Gamma \right) $ to the study of a system of linear
equations with polynomial coefficients, which can be treated by standard
computational algebra. Thus, we are able to produce generators for $\mathcal{%
M}\left( L^{\Gamma }\right) $ in Example \ref{ex-i3}.

\begin{example}
\label{ex-i4}We generalize Example \ref{ex-i3}. We suppose that the
coefficients $a_{\alpha \beta }^{i}$ in $\left( \ref{i-6}\right) $ are
real-analytic semialgebraic functions, rather than polynomials.
\end{example}

We reduce matters to Example \ref{ex-i3} by introducing new variables $%
z_{\alpha \beta }^{i}$ to replace the coefficients $a_{\alpha \beta }^{i}$.
We write $z$ to denote $\left( z_{\alpha \beta }^{i}\right) _{\substack{ %
i=1,\cdots ,N \\ \left\vert \alpha \right\vert ,\left\vert \beta \right\vert
\leq s}}$ .

Instead of $\Gamma =\left\{ \left( x,G\left( x\right) \right) :x\in
U\right\} $, we consider 
\[
\Gamma ^{+}=\left\{ \left( x,y,z\right) :\left( x,y\right) \in \Gamma
,z_{\alpha \beta }^{i}=a_{\alpha \beta }^{i}\left( x,y\right) \right\} \text{%
;}
\]%
and instead of $L$, we consider 
\[
L^{+}\left( f_{1},\cdots ,f_{N}\right) \left( x,y,z\right) =\sum_{\left\vert
\alpha \right\vert ,\left\vert \beta \right\vert \leq
s}\sum_{i=1}^{N}z_{\alpha \beta }^{i}\partial _{x}^{\alpha }\partial
_{y}^{\beta }f_{i}\left( x,y,z\right) \text{.}
\]

The algorithm for Example \ref{ex-i3} then produces generators for the
module of all polynomial vectors $\vec{P}$ in $\left( x,y,z\right) $ such
that 
\begin{equation}
L^{+}\left( Q\vec{P}\right) =0\text{ on }\Gamma ^{+}\text{ for all }Q\text{.}
\label{i-8}
\end{equation}

We then have to understand which solutions $\vec{P}$ of \eqref{i-8} do not
depend on $z$. We carry this out in Section \ref{section7} below, thus producing
generators for the module $\mathcal{M}\left( L^{\Gamma }\right) $ in Example %
\ref{ex-i4}.

Finally, we compute generators for $\mathcal{M}\left( L\right) $ in the
general case. Let $L$ be a semialgebaic differential operator. Standard
algorithms allow us to partition $\mathbb{R}^{n}$ into semialgebraic sets $%
E_{\nu }$ $\left( \nu =1,\cdots ,\nu _{\max }\right) $ on each of which
(after a $\nu $-dependent linear change of coordinates) $L$ and $E_{\nu }$ may be
brought to the form of Example \ref{ex-i4}, with $E_{\nu }$ playing the r%
\^{o}le of $\Gamma $. Therefore, $\mathcal{M}\left( L\right) $ is the
intersection of finitely many $\mathscr{R}$-modules, for each of which we
can produce generators. The computation of generators for $\mathcal{M}\left(
L\right) $ may then be accomplished by standard computational algebra.

This concludes the introductory explanation of our algorithm to compute $%
\mathcal{M}\left( L\right) $. For full details, see Sections \ref{preliminary}-\ref{section7} below.

We have made no attempt here to estimate the number of computer operations
needed to execute our algorithms. Surely an expert in computational
semialgebraic geometry could make significant improvements in our
algorithms, and estimate the complexity of the improved algorithms. We
welcome such progress. 

We are grateful to Matthias Aschenbrenner, Saugata Basu, Edward Bierstone, Jes\'{u}s De Loera,
Zeev Dvir, J\'anos Koll\'ar,
Pierre Milman, Wieslaw Paw\l {}ucki, Mohab Safey El Din, Ary Shaviv, Rekha Thomas, and the participants in the 9%
$^{th}$-11$^{th}$ Whitney workshops for valuable discussions, and to the
Technion -- Israel Institute of Technology, College of William and Mary, and
Trinity College Dublin, for hosting the above workshops. 

\section{Preliminaries}\label{preliminary}
We begin with a few elementary lemmas.

\begin{lemma}
\label{prelim-lemma1}Suppose $V\subset \mathbb{R}^{n}\times \mathbb{R}^{m}$
is a connected, real-analytic submanifold. Write $\left( x_{1},\cdots
,x_{n},y_{1},\cdots ,y_{m}\right) $ to denote a point of $\mathbb{R}%
^{n}\times \mathbb{R}^{m}$, and suppose that $x_{1},\cdots ,x_{n}$ are local
coordinates on a nonempty relatively open set $U\subset V$. Let $\Delta
\left( x_{1},\cdots ,x_{n}\right) $ be a nonzero polynomial.
Let $P\left( x_{1},\cdots ,x_{n},y_{1},\cdots ,y_{m}\right) $ be a
polynomial. If $\Delta \left( x_{1},\cdots ,x_{n}\right) \cdot P\left(
x_{1},\cdots ,x_{n},y_{1},\cdots ,y_{m}\right) =0$ on $V$, then $P\left(
x_{1},\cdots ,x_{n},y_{1},\cdots ,y_{m}\right) =0$ on $V$.
\end{lemma}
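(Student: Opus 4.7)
The plan is to use the local coordinate structure to kill $\Delta$ on an open piece of $V$, then propagate the vanishing by real-analytic continuation.

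First, I would unpack the hypothesis that $x_1,\dots,x_n$ are local coordinates on $U$: the projection $\pi:V\to\mathbb{R}^n$, $(x,y)\mapsto x$, restricted to $U$ is a real-analytic diffeomorphism onto an open set $\pi(U)\subset\mathbb{R}^n$. Consequently there exist real-analytic functions $g_1,\dots,g_m$ on $\pi(U)$ with $U=\{(x,g_1(x),\dots,g_m(x)):x\in\pi(U)\}$. Substituting into the assumed identity $\Delta\cdot P=0$ on $V$, we obtain $\Delta(x)\cdot P(x,g_1(x),\dots,g_m(x))=0$ for every $x\in\pi(U)$.

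Next, I would exploit that $\Delta$ is a nonzero polynomial on $\mathbb{R}^n$: its zero locus is a proper real-algebraic subset with empty interior, so the set $\{x\in\pi(U):\Delta(x)\ne 0\}$ is open and dense in $\pi(U)$. On this set the equation forces $P(x,g_1(x),\dots,g_m(x))=0$, and since $x\mapsto P(x,g(x))$ is continuous (in fact real-analytic) on $\pi(U)$, the vanishing extends by density to all of $\pi(U)$. Pulling back via $\pi|_U$, we conclude $P\equiv 0$ on $U$.

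Finally, I would extend from $U$ to all of $V$ by the identity principle for real-analytic functions. The restriction $P|_V$ is a real-analytic function on the connected real-analytic manifold $V$, and it vanishes on the nonempty open subset $U$; hence $P|_V\equiv 0$. The one step that deserves care, and which I regard as the main (though still standard) point, is this last invocation of the real-analytic identity principle on a manifold — one needs $V$ to be connected (given), $U$ to be nonempty and relatively open in $V$ (given), and $P$ restricted to $V$ to be real-analytic (immediate from $V$ being a real-analytic submanifold and $P$ being a polynomial). Everything else reduces to elementary facts about nonzero polynomials and continuous functions.
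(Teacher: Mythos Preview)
Your proof is correct and follows essentially the same route as the paper's: use that $\{\Delta\neq 0\}$ is dense in $U$ (via the local coordinate hypothesis) to conclude $P|_U=0$, then invoke the identity principle on the connected real-analytic manifold $V$. The only quibble is that ``$x_1,\dots,x_n$ are local coordinates on $U$'' need not force $\pi|_U$ to be globally injective, but your argument goes through verbatim after shrinking $U$ around any point, so this is harmless.
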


\begin{proof}
Since $\left( x_{1},\cdots ,x_{n}\right) $ are local coordinates on $U$, we
know that 
\begin{equation*}
\left\{ \left( x_{1},\cdots ,x_{n},y_{1},\cdots ,y_{m}\right) \in U:\Delta
\left( x_{1},\cdots ,x_{n}\right) \not=0\right\}
\end{equation*}
is dense in $U$, hence $P\left( x_{1},\cdots ,x_{n},y_{1},\cdots
,y_{m}\right) =0$ on $U$. The lemma follows, since $P$ is a polynomial and $%
V $ is a connected real-analytic manifold.
\end{proof}

\begin{lemma}
\label{prelim-lemma2}For each $\mu =1,\cdots ,m$, let 
\begin{equation*}
\tilde{P}_{\mu }\left( x_{1},\cdots ,x_{n},y_{\mu }\right) =a_{\mu }\left(
x_{1},\cdots ,x_{n}\right) y_{\mu }^{D_{\mu }}+\sum_{j<D_{\mu }}b_{j\mu
}\left( x_{1},\cdots ,x_{n}\right) \left( y_{\mu }\right) ^{j}
\end{equation*}%
be a polynomial in $\mathbb{R}\left[ x_{1},\cdots ,x_{n},y_{\mu }\right] $,
with $a_{\mu }\left( x_{1},\cdots ,x_{n}\right) $ nonzero. Let $%
\Delta \left( x\right) =\prod_{\mu =1}^{m}a_{\mu }\left( x_{1},\cdots
,x_{n}\right) $ for $x=\left( x_{1},\cdots ,x_{n}\right) $. Let $P\left(
x_{1},\cdots ,x_{n},y_{1},\cdots ,y_{m}\right) \in \mathbb{R}\left[
x_{1},\cdots ,x_{n},y_{1},\cdots ,y_{m}\right] $ be given and let $K\geq 1$.
Then, for some $l\geq 0$, we can express 
\begin{eqnarray*}
\left( \Delta \left( x\right) \right) ^{l}P\left( x_{1},\cdots
,x_{n},y_{1},\cdots ,y_{m}\right)  &=&\sum_{\mu =1}^{m}H_{\mu }\left(
x_{1},\cdots ,x_{n},y_{1},\cdots ,y_{m}\right) \cdot \left[ \tilde{P}_{\mu
}\left( x_{1},\cdots ,x_{n},y_{\mu }\right) \right] ^{K} \\
&&+P^{\#}\left( x_{1},\cdots ,x_{n},y_{1},\cdots ,y_{m}\right) ,
\end{eqnarray*}%
where $H_{\mu }$ and $P^{\#}$ are polynomials, and $\deg _{y_{1},\cdots
,y_{m}}P^{\#}\leq D$; here, $D$ is a constant that may be computed from the $%
D_{\mu }$ and $K$.
\end{lemma}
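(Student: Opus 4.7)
The plan is to perform iterated pseudo-division of $P$ by the polynomials $[\tilde P_\mu]^K$, viewed one $\mu$ at a time as polynomials in the single variable $y_\mu$ over the coefficient ring $\mathbb{R}[x_1,\dots,x_n,y_1,\dots,\widehat{y_\mu},\dots,y_m]$. The key observation is that, as a polynomial in $y_\mu$, $[\tilde P_\mu]^K$ has degree $K D_\mu$ and leading coefficient $[a_\mu(x)]^K$, which depends only on $x$.

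First I would handle a single index $\mu$. Write $P$ as a polynomial in $y_\mu$ of some degree $d_\mu = \deg_{y_\mu} P$. Perform ordinary long division in $y_\mu$ by $[\tilde P_\mu]^K$; each reduction step requires dividing by the leading coefficient $[a_\mu(x)]^K$, which we instead clear by multiplying the dividend by $[a_\mu(x)]^K$. After at most $\max(d_\mu - KD_\mu + 1,\,0)$ steps, we obtain
\[
\bigl[a_\mu(x)\bigr]^{K \ell_\mu} P \;=\; H_\mu \cdot [\tilde P_\mu]^K + R_\mu,
\]
where $H_\mu,R_\mu$ are polynomials and $\deg_{y_\mu} R_\mu < K D_\mu$. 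Crucially, since the multiplier $[a_\mu(x)]^K$ is independent of every $y_\nu$, the degrees of $R_\mu$ in the other variables $y_\nu$ ($\nu\neq\mu$) are bounded by the corresponding degrees of $P$.

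Next I would iterate this procedure through $\mu=1,2,\dots,m$. Starting from $P$, reduce modulo $[\tilde P_1]^K$ in $y_1$, producing a remainder $R_1$ with $\deg_{y_1} R_1 < K D_1$; then reduce $R_1$ modulo $[\tilde P_2]^K$ in $y_2$, producing $R_2$ with $\deg_{y_2} R_2 < K D_2$, and still $\deg_{y_1} R_2 < K D_1$ because the $y_2$-reduction only multiplies by powers of $a_2(x)$ and subtracts $y_1$-degree-preserving multiples of $[\tilde P_2]^K$; and so on. After all $m$ reductions, combining the identities and re-collecting the intermediate multipliers into a single power of $\Delta(x)=\prod_\mu a_\mu(x)$ (choose $l$ larger than $K\ell_\mu$ for every $\mu$ so that $\Delta^l$ carries all the needed $a_\mu$-factors), we arrive at
\[
\bigl[\Delta(x)\bigr]^l \, P \;=\; \sum_{\mu=1}^m H_\mu \cdot [\tilde P_\mu]^K + P^\#,
\]
with $\deg_{y_\mu} P^\# < K D_\mu$ for each $\mu$. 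Setting $D = K\sum_{\mu=1}^m D_\mu$ gives the desired bound $\deg_{y_1,\dots,y_m} P^\# \le D$, and $D$ depends only on the $D_\mu$ and $K$.

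There is no serious obstacle here; the only point requiring care is the bookkeeping in the iteration, namely verifying that reducing in $y_\nu$ at step $\nu$ does not reintroduce high degree in the previously-reduced variables $y_\mu$ for $\mu<\nu$. This holds because each reduction multiplies only by powers of $a_\nu(x)$ (a function of $x$ alone) and subtracts polynomial multiples of $[\tilde P_\nu]^K$ whose degrees in $y_\mu$ ($\mu\neq\nu$) do not exceed those already achieved.
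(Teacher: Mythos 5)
Your proof is correct and follows essentially the same route as the paper's. The paper organizes the argument by working in the localized ring $\mathscr{R}$ of rational functions $P(x)/\Delta(x)^p$, where each $\tilde P_\mu$ becomes a unit times a monic polynomial in $y_\mu$, then performs ordinary division in $\mathscr{R}[y_1,\dots,y_m]$ variable-by-variable (with an explicit induction over $i$ that writes the current remainder as a combination of low-degree monomials $y_1^{\alpha_1}\cdots y_i^{\alpha_i}$ with coefficients in $\mathscr{R}[y_{i+1},\dots,y_m]$), and only clears denominators at the very end. You instead perform pseudo-division directly over $\mathbb{R}[x,y]$, multiplying by powers of $a_\mu(x)$ as you go and then recombining into a single power of $\Delta$. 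These are two presentations of the same idea; the crucial bookkeeping point in both is the one you highlight, namely that the $y_\nu$-reduction multiplies only by $x$-dependent factors and subtracts multiples of $[\tilde P_\nu]^K$ (which involves no $y_\mu$ with $\mu\neq\nu$), so previously achieved degree bounds in the other $y$-variables are preserved. The paper's localized-ring formulation makes this bookkeeping invisible, which is the only real advantage; your version is more elementary. Your final bound $D=K\sum_\mu D_\mu$ is correct and computable as required.
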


\begin{proof}
Let $\mathscr{R}$ be the ring of rational functions of the form 
\begin{equation*}
\frac{P\left( x_{1},\cdots ,x_{n}\right) }{\left( \Delta \left( x\right)
\right) ^{p}}\text{, for some polynomial }P\text{ and some integer power }p%
\text{.}
\end{equation*}%
Each $\tilde{P}_{\mu }$ is a unit times a monic polynomial in $y_{\mu }$,
when we work in the ring $\mathscr{R}\left[ y_{1},\cdots ,y_{m}\right] $.
Hence, in $\mathscr{R}\left[ y_{1},\cdots ,y_{m}\right] $, we may divide
polynomials by powers of $\tilde{P}_{\mu }$, obtaining a quotient and a
remainder in the usual way.

By induction on $i\geq 0$ (with $i\leq m$), we show that we can write $P$ in
the form 
\begin{eqnarray}
&&P\left( x_{1},\cdots ,x_{n},y_{1},\cdots ,y_{m}\right)   \label{prelim-1}
\\
&=&\sum_{1\leq \mu \leq i}H_{\mu }\left(y_{1},\cdots
,y_{m}\right) \left[ \tilde{P}_{\mu }\left( x_{1},\cdots ,x_{n},y_{\mu
}\right) \right] ^{K}+R\left( y_{1},\cdots ,y_{m}\right) 
\notag
\end{eqnarray}%
with $H_{\mu },R\in \mathscr{R}\left[ y_{1},\cdots ,y_{m}\right] $ and $\deg
_{y_{\mu }}R<KD_{\mu }$ for $1\leq \mu \leq i$.

In the base case, $i=0$, so (\ref{prelim-1}) is trivial. We take $R=P$.

Suppose (\ref{prelim-1}) holds for a given $i<m$. We will prove that $P$
may be expressed in the form (\ref{prelim-1}), with $i+1$ in place of $i$.

Write 
\begin{equation*}
R\left(y_{1},\cdots ,y_{m}\right) =\sum_{\alpha
_{1}<KD_{1}}\cdots \sum_{\alpha _{i}<KD_{i}}R_{\left( \alpha _{1},\cdots
,\alpha _{i}\right) }\left( y_{i+1},\cdots ,y_{m}\right)
y_{1}^{\alpha _{1}}\cdots y_{i}^{\alpha _{i}},
\end{equation*}%
with each $R_{(\alpha_1,\cdots, \alpha_i)} \in \mathscr{R}[y_{i+1},\cdots,y_m]$.
Performing a division by $\left[ \tilde{P}_{i+1}\left( x_{1},\cdots
,x_{n},y_{i+1}\right) \right] ^{K}$ in the ring $\mathscr{R}\left[y_{i+1},\cdots ,y_{m}\right] $, we may write 
\begin{equation*}
R_{\left( \alpha _{1},\cdots ,\alpha _{i}\right) }=\tilde{H}_{\left( \alpha
_{1},\cdots ,\alpha _{i}\right) }\cdot \left[ \tilde{P}_{i+1}\left( x_{1},\cdots
,x_{n},y_{i+1}\right) \right] ^{K}+S_{\left( \alpha _{1},\cdots ,\alpha
_{i}\right) }\left( x_{1},\cdots ,x_{n},y_{i+1},\cdots ,y_{m}\right) 
\end{equation*}%
with 
\begin{equation*}
\tilde{H}_{\left( \alpha _{1},\cdots ,\alpha _{i}\right) } ,S_{\left( \alpha
_{1},\cdots ,\alpha _{i}\right) }\in \mathscr{R}\left[y_{i+1},\cdots ,y_{m}\right] 
\end{equation*}%
and 
\begin{equation*}
\deg _{y_{i+1}}S_{\left( \alpha _{1},\cdots ,\alpha _{i}\right) }<KD_{i+1}.
\end{equation*}%
Now set 
\begin{equation*}
\tilde{H}=\sum_{\alpha _{1}<KD_{1}}\cdots \sum_{\alpha _{i}<KD_{i}}\tilde{H}%
_{\left( \alpha _{1},\cdots ,\alpha _{i}\right) }y_{1}^{\alpha _{1}}\cdots
y_{i}^{\alpha _{i}}
\end{equation*}%
and 
\begin{equation*}
S=\sum_{\alpha _{1}<KD_{1}}\cdots \sum_{\alpha _{i}<KD_{i}}S_{\left( \alpha
_{1},\cdots ,\alpha _{i}\right) }y_{1}^{\alpha _{1}}\cdots y_{i}^{\alpha
_{i}}\text{.}
\end{equation*}%
Then $\tilde{H}$, $S\in \mathscr{R}\left[ y_{1},\cdots
,y_{m}\right] $ and $\deg _{y_{\mu }}S<KD_{\mu }$ for $\mu =1,\cdots ,i+1$.
Moreover, $R=\tilde{H}\cdot \left[ \tilde{P}_{i+1}\right] ^{K}+S$. Therefore, by (%
\ref{prelim-1}), we have 
\begin{eqnarray*}
&&P\left( x_{1},\cdots ,x_{n},y_{1},\cdots ,y_{m}\right)  \\
&=&\sum_{1\leq \mu \leq i}H_{\mu }\left( x_{1},\cdots ,x_{n},y_{1},\cdots
,y_{m}\right) \cdot \left[ \tilde{P}_{\mu }\left( x_{1},\cdots
,x_{n},y_{\mu} \right) \right] ^{K} \\
&&+\tilde{H}\left( y_{1},\cdots ,y_{m}\right) \cdot %
\left[ \tilde{P}_{i+1}\left(x_1, \cdots, x_n, y_{i+1}\right) \right]
^{K} \\
&&+S\left( y_{1},\cdots ,y_{m}\right) \text{,}
\end{eqnarray*}%
proving (\ref{prelim-1}) with $i+1$ in place of $i$.

This completes our induction, and establishes (\ref{prelim-1}).

Now taking $i=m$ in (\ref{prelim-1}), and clearing denominators by
multiplying by a high power of $\left( \Delta \left( x\right) \right) $, we
obtain the conclusion of Lemma \ref{prelim-lemma2}.
\end{proof}

\begin{lemma}
\label{prelim-lemma3}Let $\mathscr{R}$ be a ring, and let 
\begin{equation*}
\tilde{P}_{\mu }\left( y_{\mu }\right) =y_{\mu }^{D_{\mu
}}+\sum_{i=1}^{D_{\mu }-1}a_{\mu i}y_{\mu }^{i}
\end{equation*}%
be monic polynomials with coefficients in $\mathscr{R}$ for $\mu =1,\cdots ,m
$.

Let $H_{\mu }(y_{1},\cdots ,y_{m})$ be polynomials with coefficients in $%
\mathscr{R}$, and suppose that 
\begin{equation*}
\sum_{\mu =1}^{m}H_{\mu }\left( y_{1},\cdots ,y_{m}\right) \tilde{P}_{\mu
}\left( y_{\mu }\right) 
\end{equation*}%
has degree $\leq D$, where $D\geq \max \left\{ D_{1},\cdots ,D_{m}\right\} $.

Then there exist polynomials $\tilde{H}_{\mu }\left( y_{1},\cdots
,y_{m}\right) $ with coefficients in $\mathscr{R}$ such that 
\begin{equation}
\sum_{\mu =1}^{m}\tilde{H}_{\mu }\left( y_{1},\cdots ,y_{m}\right) \tilde{P}%
_{\mu }\left( y_{\mu }\right) =\sum_{\mu =1}^{m}H_{\mu }\left( y_{1},\cdots
,y_{m}\right) \tilde{P}_{\mu }\left( y_{\mu }\right)   \label{prelim-*}
\end{equation}%
with $\deg \tilde{H}_{\mu }\leq D-D_{\mu }$ for each $\mu $.
\end{lemma}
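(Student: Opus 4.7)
The plan is to prove the lemma by induction on the quantity $d := \max_\mu\bigl(\deg H_\mu + D_\mu\bigr)$. If $d \leq D$, then $\deg H_\mu \leq D - D_\mu$ for every $\mu$, and we may take $\tilde{H}_\mu := H_\mu$. Otherwise, I shall construct a syzygy $(S_1, \ldots, S_m)$ of $(\tilde{P}_1, \ldots, \tilde{P}_m)$, meaning $\sum_\mu S_\mu \tilde{P}_\mu = 0$, with $\deg S_\mu \leq d - D_\mu$ and whose top degree-$(d - D_\mu)$ homogeneous component agrees with that of $H_\mu$. Then $H_\mu - S_\mu$ has degree strictly less than $d - D_\mu$, the identity \eqref{prelim-*} continues to hold after the replacement $H_\mu \rightsquigarrow H_\mu - S_\mu$, and $d$ drops. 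Iterating drives $d$ down to at most $D$ and yields the desired $\tilde{H}_\mu$.

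To construct $(S_\mu)$, I exploit that each $\tilde{P}_\mu$ has top homogeneous component $y_\mu^{D_\mu}$ in the natural total-degree grading on $\mathscr{R}[y_1, \ldots, y_m]$. Let $h_\mu$ denote the degree-$(d - D_\mu)$ homogeneous component of $H_\mu$ (with the convention $h_\mu = 0$ if $\deg H_\mu < d - D_\mu$). The degree-$d$ homogeneous part of $\sum_\mu H_\mu \tilde{P}_\mu$ is exactly $\sum_\mu h_\mu y_\mu^{D_\mu}$; since the left-hand side has total degree at most $D < d$, I obtain the relation
\[
\sum_{\mu=1}^m h_\mu \, y_\mu^{D_\mu} = 0 \quad \text{in } \mathscr{R}[y_1, \ldots, y_m].
\]
Because $y_1^{D_1}, \ldots, y_m^{D_m}$ is a regular sequence in the polynomial ring over the commutative ring $\mathscr{R}$, the Koszul complex is exact, and its first syzygy module is generated by the Koszul relations $\sigma_{\mu\nu} := y_\nu^{D_\nu} e_\mu - y_\mu^{D_\mu} e_\nu$ for $\mu < \nu$. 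Placing $e_\mu$ in degree $D_\mu$ makes this resolution graded, so $(h_\mu)$ admits an expansion $(h_\mu) = \sum_{\mu < \nu} c_{\mu\nu} \sigma_{\mu\nu}$ with each $c_{\mu\nu} \in \mathscr{R}[y_1, \ldots, y_m]$ homogeneous of degree $d - D_\mu - D_\nu$.

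I then lift by substituting $\tilde{P}_\nu$ for $y_\nu^{D_\nu}$, defining
\[
S_\mu := \sum_{\nu > \mu} c_{\mu\nu} \tilde{P}_\nu \;-\; \sum_{\mu' < \mu} c_{\mu'\mu} \tilde{P}_{\mu'}.
\]
A direct commutative computation confirms $\sum_\mu S_\mu \tilde{P}_\mu = 0$. Since $\tilde{P}_\nu = y_\nu^{D_\nu} + (\text{strictly lower-degree terms})$, each $S_\mu$ has $\deg S_\mu \leq d - D_\mu$, and its degree-$(d - D_\mu)$ homogeneous component equals $h_\mu$; subtracting $S_\mu$ from $H_\mu$ therefore kills the top form and decreases $d$, completing the induction step. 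The main obstacle is establishing that the first syzygy module of $(y_1^{D_1}, \ldots, y_m^{D_m})$ admits a graded generating set with coefficients of the correct degree, which follows from graded exactness of the Koszul complex on a regular sequence over an arbitrary commutative ring; once this is granted, the remaining bookkeeping is routine.
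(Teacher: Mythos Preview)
Your proof is correct and follows essentially the same strategy as the paper's: induct on $d=\max_\mu(\deg H_\mu+D_\mu)$, extract the top homogeneous relation $\sum_\mu h_\mu\,y_\mu^{D_\mu}=0$, express it as a combination of Koszul-type syzygies, lift each by replacing $y_\nu^{D_\nu}$ with $\tilde P_\nu$, and subtract to decrease $d$. The only difference is that you appeal to graded exactness of the Koszul complex on the regular sequence $(y_\mu^{D_\mu})$, whereas the paper carries out the equivalent syzygy decomposition by hand, monomial by monomial, via the elementary observation that any relation $\sum_{\mu\in S}A_\mu=0$ in $\mathscr R$ lies in the $\mathscr R$-span of the vectors $e_i-e_j$.
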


\begin{proof}
Let $M=\max_{1\leq \mu \leq m}\left\{ D_{\mu }+\deg H_{\mu }\right\} $. It
is enough to show that, if $M>D$, then there exist $\tilde{H}_{\mu }$
satisfying (\ref{prelim-*}) such that 
\begin{equation*}
\max_{1\leq \mu \leq m}\left\{ D_{\mu }+\deg \tilde{H}_{\mu }\right\} <M.
\end{equation*}%
To show this, we proceed as follows.

Let $H_{\mu }=H_{\mu }^{o}+H_{\mu }^{err}$, where $H_{\mu }^{o}$ is
homogeneous of degree $M-D_{\mu }$, and $\deg H_{\mu }^{err}<M-D_{\mu }$.

If $M>D$, then $\sum_{\mu }H_{\mu }^{o}y_{\mu }^{D_{\mu }}=0$, since $H_{\mu
}\tilde{P}_{\mu }=H_{\mu }^{o}y_{\mu }^{D_{\mu }}+$(terms of degree $<M$).

Writing $H_{\mu }^{o}=\sum_{\left\vert \beta \right\vert =M-D_{\mu
}}A_{\beta }^{\mu }y^{\beta }$ with $A_{\beta }^{\mu }\in \mathscr{R}$, and
defining 
\begin{equation*}
S\left( \gamma \right) =\left\{ i:\gamma _{i}\geq D_{i}\right\}
\end{equation*}%
for multiindices $\gamma =\left( \gamma _{1},\cdots ,\gamma _{m}\right) $,
we conclude that 
\begin{equation*}
\sum_{\mu \in S\left( \gamma \right) }A_{\gamma -D_{\mu }\mathbb{I}_{\mu
}}^{\mu }=0
\end{equation*}%
for each $\left\vert \gamma \right\vert =M$. Here $\mathbb{I}_{\mu }=\left(
a_{1},\cdots ,a_{m}\right) $ with $a_{\mu }=1$ and $a_{i}=0$ when $i\not=\mu 
$.

We now use the following observation: Let $A_{\mu }\in \mathscr{R}$ for $\mu
\in S$ (a finite set), and suppose that $\sum_{\mu \in S}A_{\mu }=0$. Then
there exist $\lambda _{ij}\in \mathscr{R}$ for $i,j\in S$ distinct, such
that 
\begin{equation*}
A_{\mu }=\sum_{i,j\in S\text{,}i\not=j}\lambda _{ij}\left( \delta _{\mu
i}-\delta _{\mu j}\right) \text{ for each } \mu \in S.
\end{equation*}%
Applying the observation to the $A_{\gamma -D_{\mu }\mathbb{I}_{\mu }}^{\mu
} $ for fixed $\gamma $, we obtain $\lambda _{ij}^{\gamma }\in \mathscr{R}$
for $i,j\in S\left( \gamma \right) $ distinct, $\left\vert \gamma
\right\vert =M$, such that 
\begin{equation*}
A_{\gamma -D_{\mu }\mathbb{I}_{\mu }}^{\mu }=\sum_{i,j\in S(\gamma)\text{,}%
i\not=j}\lambda _{ij}^{\gamma }\left( \delta _{\mu i}-\delta _{\mu j}\right)
\text{ for } \mu \in S(\gamma).
\end{equation*}%
Note that for $i,j\in S\left( \gamma \right) $ distinct, $\gamma -D_{i}%
\mathbb{I}_{i}-D_{j}\mathbb{I}_{j}$ is a multiindex (i.e., its components
are non-negative).

For $\left\vert \gamma \right\vert =M$, let $H_{\mu }^{\gamma }=\sum_{i,j\in
S\left( \gamma \right) ,i\not=j}\lambda _{ij}^{\gamma }y^{\gamma -D_{i}%
\mathbb{I}_{i}-D_{j}\mathbb{I}_{j}}\left( \tilde{P}_{j}\left( y_{j}\right) \delta
_{\mu i}-\tilde{P}_{i}\left( y_{i}\right) \delta _{\mu j}\right) $.

Note that 
\begin{eqnarray*}
&&\sum_{\mu }H_{\mu }^{\gamma }\tilde{P}_{\mu }\left( y_{\mu }\right)  \\
&=&\sum_{i,j\in S\left( \gamma \right) ,i\not=j}\lambda _{ij}^{\gamma
}y^{\gamma -D_{i}\mathbb{I}_{i}-D_{j}\mathbb{I}_{j}}\left( \tilde{P}_{j}\left(
y_{j}\right) \sum_{\mu }\delta _{\mu i}\tilde{P}_{\mu }\left( y_{\mu
}\right) -\tilde{P}_{i}\left( y_{i}\right) \sum_{\mu }\delta _{\mu j}\tilde{P}_{\mu
}\left( y_{\mu }\right) \right)  \\
&=&0.
\end{eqnarray*}

Moreover, 
\begin{eqnarray*}
&&H_{\mu }^{\gamma }\tilde{P}_{\mu }\left( y_{\mu }\right)  \\
&=&\sum_{i,j\in S\left( \gamma \right) ,i\not=j}\lambda _{ij}^{\gamma
}y^{\gamma }\left( \delta _{\mu i}-\delta _{\mu j}\right) +\text{(terms of
degree }<M\text{)} \\
&=& \begin{cases} A_{\gamma -D_{\mu }\mathbb{I}_{\mu }}^{\mu }y^{\gamma }+\text{(terms of
degree }<M)\text{ if } S(\gamma) \ni \mu \\
\text{(terms of degree } <M) \text{ if } S(\gamma) \not \ni \mu 
\end{cases}
\end{eqnarray*}%
whereas 
\begin{eqnarray*}
&&H_{\mu }\tilde{P}_{\mu }\left( y_{\mu }\right)  \\
&=&\sum_{\left\vert \gamma \right\vert =M, S(\gamma) \ni \mu }A_{\gamma -D_{\mu }\mathbb{I}_{\mu }}^{\mu }
y^{\gamma }+\text{(terms of degree }<M\text{).}
\end{eqnarray*}%
Setting $\tilde{H}_{\mu }=H_{\mu }-\sum_{\left\vert \gamma \right\vert
=M}H_{\mu }^{\gamma }$, we conclude that 
\begin{equation*}
\deg \left( \tilde{H}_{\mu }\tilde{P}_{\mu }\left( y_{\mu }\right) \right) <M%
\text{ (all }\mu \text{), and that }\sum_{\mu }\tilde{H}_{\mu }\tilde{P}%
_{\mu }\left( y_{\mu }\right) =\sum_{\mu }H_{\mu }\tilde{P}_{\mu }\left(
y_{\mu }\right) \text{.}
\end{equation*}%
The proof of the lemma is complete.
\end{proof}

\begin{lemma}
\label{prelim-lemma4}For $\mu =1,\cdots ,m$, let 
\begin{equation*}
\tilde{P}_{\mu }\left( x_{1},\cdots ,x_{n},y_{\mu }\right) =a_{\mu }\left(
x_{1},\cdots ,x_{n}\right) y_{\mu }^{D_{\mu }}+\sum_{i=0}^{D_{\mu }-1}a_{\mu
i}\left( x_{1},\cdots ,x_{n}\right) \left( y_{\mu }\right) ^{i}
\end{equation*}%
be a polynomial with $a_{\mu }$ nonzero.

Let $\Delta (x_{1},\cdots ,x_{n})=\prod_{\mu =1}^{m}a_{\mu }(x_{1},\cdots
,x_{n})$. Let $H_{\mu }(x_{1},\cdots ,x_{n},y_{1},\cdots ,y_{m})$ be
polynomials ($\mu =1,\cdots ,m$), and suppose that 
\begin{equation*}
\deg _{y}\left\{ \sum_{\mu =1}^{m}H_{\mu }\left( x_{1},\cdots
,x_{n},y_{1},\cdots ,y_{m}\right) \tilde{P}_{\mu }\left( x_{1},\cdots
,x_{n},y_{\mu }\right) \right\} \leq D,
\end{equation*}%
where $D\geq \max \left\{ D_{1},\cdots ,D_{m}\right\} $.

Then there exist polynomials $H_\mu^{\#}$ such that for some $l$ we have

\begin{itemize}
\item \quad $\left( \Delta \left( x\right) \right) ^{l}\sum_{\mu
=1}^{m}H_{\mu }\left( x_{1},\cdots ,x_{n},y_{1},\cdots ,y_{m}\right) \tilde{P%
}_{\mu }\left( x_{1},\cdots ,x_{n},y_{\mu }\right) $ \newline
$=\sum_{\mu =1}^{m}H_{\mu }^{\#}\left( x_{1},\cdots ,x_{n},y_{1},\cdots
,y_{m}\right) \tilde{P}_{\mu }\left( x_{1},\cdots ,x_{n},y_{\mu }\right) $ 
\newline
and

\item $\deg _{y}H_{\mu }^{\#}\leq D-D_{\mu }$ for each $\mu =1,\cdots ,m$.
\end{itemize}
\end{lemma}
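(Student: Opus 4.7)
The plan is to deduce Lemma \ref{prelim-lemma4} from Lemma \ref{prelim-lemma3} by localizing at $\Delta(x)$ so that each $\tilde{P}_\mu$ becomes (a unit multiple of) a monic polynomial in $y_\mu$, applying Lemma \ref{prelim-lemma3} in the resulting ring of polynomials in $y_1,\dots,y_m$, and then clearing denominators.

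First I would introduce the ring $\mathscr{R}' = \mathbb{R}[x_1,\dots,x_n][\Delta(x)^{-1}]$ of rational functions in $x$ whose denominators are powers of $\Delta(x)$. In $\mathscr{R}'$ each leading coefficient $a_\mu(x)$ is a unit, so the polynomial
\[
\tilde{P}_\mu^{\flat}(x,y_\mu) := a_\mu(x)^{-1} \tilde{P}_\mu(x,y_\mu) = y_\mu^{D_\mu} + \sum_{i=0}^{D_\mu - 1} \bigl(a_{\mu i}(x)/a_\mu(x)\bigr) y_\mu^{i}
\]
is monic in $y_\mu$ with coefficients in $\mathscr{R}'$. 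Setting $G_\mu := a_\mu(x)\, H_\mu \in \mathscr{R}'[y_1,\dots,y_m]$, we have $G_\mu \tilde{P}_\mu^{\flat} = H_\mu \tilde{P}_\mu$, hence
\[
\sum_{\mu=1}^{m} G_\mu(x,y)\,\tilde{P}_\mu^{\flat}(x,y_\mu) \;=\; \sum_{\mu=1}^{m} H_\mu(x,y)\,\tilde{P}_\mu(x,y_\mu),
\]
and the hypothesis gives that this common sum has $y$-degree at most $D$.

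Next I would apply Lemma \ref{prelim-lemma3}, with $\mathscr{R}$ replaced by $\mathscr{R}'$, to the monic polynomials $\tilde{P}_\mu^{\flat}$ and the coefficients $G_\mu$. This yields $\tilde{G}_\mu \in \mathscr{R}'[y_1,\dots,y_m]$ with $\deg_y \tilde{G}_\mu \leq D - D_\mu$ and
\[
\sum_{\mu=1}^{m} \tilde{G}_\mu \, \tilde{P}_\mu^{\flat} \;=\; \sum_{\mu=1}^{m} G_\mu \, \tilde{P}_\mu^{\flat} \;=\; \sum_{\mu=1}^{m} H_\mu \, \tilde{P}_\mu.
\]
Defining $\tilde{H}_\mu := a_\mu(x)^{-1}\tilde{G}_\mu \in \mathscr{R}'[y_1,\dots,y_m]$, we get $\tilde{H}_\mu \tilde{P}_\mu = \tilde{G}_\mu \tilde{P}_\mu^{\flat}$, still with $\deg_y \tilde{H}_\mu \leq D - D_\mu$ since dividing by $a_\mu(x)$ does not affect the $y$-degree.

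Finally, since each $\tilde{H}_\mu$ has only finitely many coefficients, each of the form (polynomial in $x$)$/\Delta(x)^{p_\mu}$, I would pick $l$ larger than all the $p_\mu$ and set $H_\mu^{\#} := \Delta(x)^{l} \tilde{H}_\mu$. These are honest polynomials in $(x,y)$, with $\deg_y H_\mu^{\#} = \deg_y \tilde{H}_\mu \leq D - D_\mu$ (multiplication by a function of $x$ alone preserves $y$-degree), and
\[
\sum_{\mu=1}^{m} H_\mu^{\#}(x,y)\,\tilde{P}_\mu(x,y_\mu) \;=\; \Delta(x)^{l} \sum_{\mu=1}^{m} \tilde{H}_\mu \tilde{P}_\mu \;=\; \Delta(x)^{l} \sum_{\mu=1}^{m} H_\mu \tilde{P}_\mu,
\]
which is exactly the required identity. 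The only point that needs any care is ensuring that the degree notation matches between the two lemmas — the total $y$-degree in Lemma \ref{prelim-lemma3} is preserved under multiplication and division by elements of $\mathscr{R}'$ (functions of $x$ only) — but this is automatic, so no genuine obstacle arises.
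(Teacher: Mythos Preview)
Your proof is correct and is exactly the approach the paper takes: the paper's own proof says only ``Immediate from Lemma~\ref{prelim-lemma3} with $\mathscr{R}$ taken to be the ring of rational functions of the form $P(x_1,\dots,x_n)/(\Delta(x))^p$,'' and you have simply spelled out the details of that reduction (making the $\tilde{P}_\mu$ monic over the localization, invoking Lemma~\ref{prelim-lemma3}, and clearing denominators).
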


\begin{proof}
Immediate from Lemma \ref{prelim-lemma3} with $\mathscr{R}$ taken to be the
ring of rational functions of the form $\frac{P\left( x_{1},\cdots
,x_{n}\right) }{\left( \Delta \left( x\right) \right) ^{p}}$, for some
polynomial $P$ and some integer power $p$.
\end{proof}

We recall a few elementary facts and definitions from algebraic geometry; see \cite{fulton,shafarevich}.

An \underline{algebraic set} in $\mathbb{C}^n$ is a set of the form $V=\{z \in \mathbb{C}^n:P_1(z)=\cdots=P_k(z)=0 \}$ where $P_1,\cdots,P_k$ are polynomials. $V$ is called \underline{irreducible} if it cannot be expressed as the union of two algebraic sets $V=V_1 \cup V_2$ with $V_1,V_2 \not=V$. An irreducible algebraic set is called an \underline{affine variety}. Every algebraic set $V \subset \mathbb{C}^n$ may be expressed as the union of finitely many affine varieties $V_1,\cdots, V_p$ with $V_i \not\subset V_j$ for $i\not= j$. These $V_i$ are the \underline{irreducible components} of $V$.
In any given affine variety $V$, the set $V_{\text{reg}}\subset V$ of \underline{regular points} of $V$ is a connected complex analytic  submanifold of $\mathbb{C}^n$. Moreover, $V_{\text{reg}}$ is dense in $V$.

\begin{lemma}\label{lemma5}
Suppose $W=\{z\in \mathbb{C}^n: P_\mu(z)=0 \text{ for } \mu=1,\cdots, m \}$ for polynomials $P_1,\cdots, P_m$. Let $V_1,\cdots, V_p$ be the irreducible components of $W$, and for each $j=1,\cdots, p$, let $V_{j,\text{reg}}$ be the set of all regular points of $V_j$.

Let $z_0 \in W$ and suppose that the differentials $dP_\nu(z_0)$ $(\nu=1,\cdots, m)$ are linearly independent.

Then there exist an index $j_0 \in \{1,\cdots, p\}$ and a small ball $B(z_0,r) \subset \mathbb{C}^n$ about $z_0$, such that $$W\cap B(z_0,r) = V_{j_0,\text{reg}} \cap B(z_0,r)$$ and $$V_j \cap B(z_0,r)=\emptyset \text{ for } j\not=j_0.$$

\end{lemma}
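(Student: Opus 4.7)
\medskip

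\noindent\emph{Proof proposal.} The plan is to use the holomorphic implicit function theorem to make $W$ a complex manifold near $z_0$, and then invoke the fact that a proper analytic subset of a connected complex manifold is nowhere dense in order to identify a single $V_j$ that fills out $W$ locally. First I would observe that the linear independence of $dP_1(z_0), \dots, dP_m(z_0)$ implies, by the complex implicit function theorem, that for all sufficiently small $r > 0$ the intersection $W \cap B(z_0,r)$ is a connected complex submanifold of $\mathbb{C}^n$ of complex dimension $n-m$. In particular $z_0$ is a smooth point of $W$.

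Next I would exploit the decomposition $W = V_1 \cup \cdots \cup V_p$: each $V_j \cap B(z_0,r)$ is a closed complex-analytic subset of the connected complex manifold $W \cap B(z_0,r)$, and their union equals $W \cap B(z_0,r)$. Because a proper closed analytic subset of a connected complex manifold is nowhere dense, at least one of these pieces, call it $V_{j_0} \cap B(z_0,r)$, must equal the entire manifold $W \cap B(z_0,r)$. Since every point of this manifold is then a smooth point of $V_{j_0}$ of the correct dimension $n-m = \dim V_{j_0}$, every such point lies in $V_{j_0,\mathrm{reg}}$; in particular $z_0 \in V_{j_0,\mathrm{reg}}$ and $V_{j_0,\mathrm{reg}} \cap B(z_0,r) = W \cap B(z_0,r)$, as required for the first equality.

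For the remaining components $V_j$ with $j \ne j_0$ I would argue by contradiction. If $z_0 \in V_j$ for some such $j$, then after possibly shrinking $r$ we would have $V_j \cap B(z_0,r) \subset W \cap B(z_0,r) = V_{j_0} \cap B(z_0,r)$, so $V_j \cap V_{j_0}$ contains a Euclidean-open subset of $V_j$. The main (and only delicate) step in the argument is to upgrade this local containment to the global statement $V_j \subset V_{j_0}$: since $V_j$ is irreducible, any Euclidean-open subset of $V_j$ is Zariski-dense in $V_j$, so the Zariski-closed subset $V_j \cap V_{j_0}$ of $V_j$ must equal $V_j$. This contradicts the hypothesis that $V_j$ and $V_{j_0}$ are distinct irreducible components of $W$. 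Hence $z_0 \notin V_j$ for $j \ne j_0$, and since each such $V_j$ is closed in $\mathbb{C}^n$, a further shrinking of $r$ ensures $V_j \cap B(z_0,r) = \emptyset$, completing the proof.
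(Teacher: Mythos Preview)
Your argument is correct. The paper itself does not give a proof of this lemma at all; it simply cites Shafarevich's textbook, so there is nothing in the paper to compare your approach against. Your route---implicit function theorem to get a local connected complex manifold, then a Baire-type argument to force one $V_{j_0}$ to fill out $W$ locally, then irreducibility to exclude the other components---is the standard one and is carried out cleanly. Two small remarks: the equality $\dim V_{j_0}=n-m$ that you invoke deserves one sentence of justification (e.g., $V_{j_0,\mathrm{reg}}$ is Euclidean-dense in $V_{j_0}$, hence meets the open set $B(z_0,r)\cap V_{j_0}$, where the local dimension is visibly $n-m$); and in the last step no shrinking of $r$ is needed to get $V_j\cap B(z_0,r)\subset V_{j_0}\cap B(z_0,r)$, since $V_j\subset W$ already. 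Neither affects the validity of the proof.
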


\begin{proof}
See \cite{shafarevich}.
\end{proof}

\section{Background from Computational Algebraic Geometry}

In this section, we present some known technology for computations
involving semialgebraic sets in $\mathbb{R}^n$ and algebraic sets in $\mathbb{C}^n$. See the reference book \cite{basu}.

We begin by describing our model of computation, copied from \cite{cf-luli-dfq-cm}. Our algorithms are to be
run on an idealized computer with standard von Neumann architecture \cite{vonneumann},
able to store and perform basic arithmetic operations on integers and
infinite precision real numbers, without roundoff errors or overflow
conditions. We suppose that our computer can access an ORACLE that solves
polynomial equations in one unknown. More precisely, the ORACLE answers
queries; a query consists of a non-constant polynomial $P$ (in one variable)
with real coefficients, and the ORACLE responds to a query $P$ by producing
a list of all the real roots of $P$.

Let us compare our model of computation with that of \cite{basu}.

All  arithmetic in \cite{basu} is performed within a subring $\Lambda$ of a real  
closed field $K$ (e.g. the integers sitting inside the reals). However,  
some algorithms in \cite{basu} produce as output a finite list of elements of  
$K$ not necessarily belonging to $\Lambda$. A field element $x_0$ arising in  
such an output is  specified by exhibiting a polynomial $P$ (in one  
variable) with coefficients in $\Lambda$ such that $P(x_0)=0$,  together with  
other data to distinguish $x_0$ from the other roots of $P$.

In our model of computation, we take $\Lambda$ and $K$ to consist of all real
numbers, and we query the ORACLE whenever \cite{basu} specifies a real number  
by means of a polynomial $P$ as above.

Next, we describe how we will represent a semialgebraic set $E$. We will
specify a Boolean combination of sets of the form

\LAQ{XXX}{$\left\{ \left(x_1, \cdots, x_n \right) \in \mathbb{R}^n:
P(x_1,\cdots, x_n)>0 \right\}$,}
\LAQ{YYY}{$\left\{ \left(x_1, \cdots, x_n \right) \in \mathbb{R}^n:
P(x_1,\cdots, x_n)<0 \right\}$, or}
\LAQ{ZZZ}{$\left\{ \left(x_1, \cdots, x_n \right) \in \mathbb{R}^n:
P(x_1,\cdots, x_n)=0 \right\}$}
for polynomials $P \in \mathbb{R}[x_1,\cdots, x_n]$.

A given semialgebraic set may be specified as above in many different ways, but that won't bother us.

A semialgebraic function $F: E \rightarrow \mathbb{R}^m$ is specified by specifying its graph $\{(x,F(x)): x \in E\} \subset E \times \mathbb{R}^m$. 

We will also make computations with algebraic sets $V \subset \mathbb{C}^n$. To specify $V$, we exhibit polynomials $P_1, \cdots, P_k \in \mathbb{C}[z_1,\cdots, z_n]$, such that 
$V =\{ z \in \mathbb{C}^n: P_1(z)=\cdots=P_k(z) =0 \}$. 

(We represent the coefficients of the $P_j$ in the obvious way, by specifying their real and imaginary parts.) 

Again, a given $V$ may be specified as above in many different ways, but that won't bother us.

\subsection{Known Algorithms\label{section-known-algorithms}}
In this subsection we present several known algorithms from computational algebraic geometry.

We begin with two algorithms that deal with algebraic sets in $\mathbb{C}^n$.

\begin{algorithm}\label{algorithm-generators-ideal-algebraic-set}
Given an algebraic set $V\subset\mathbb{C}^n$, we compute generators for the ideal of all $P \in \mathbb{C}[z_1,\cdots,z_n]$ that vanish on $V$. {\textup{(See \cite{becker}; see also \cite{eisenbud} for a different algorithm.)}} \end{algorithm}

\begin{algorithm}\label{connected-components}
Given an algebraic set $V\subset\mathbb{C}^n$, we compute its irreducible components $V_1, \cdots, V_p$. {\textup{(See \cite{fortuna}.)}}
\end{algorithm}

We will need several algorithms pertaining to semialgebraic sets $E \subset \mathbb{R}^n$.

\begin{algorithm}\label{dimension-semialgebraic-alg}
Given a semialgebraic set $E$, we compute its dimension. \textup{(See Algorithm 14.31 in \cite{basu}.)}
\end{algorithm}

\begin{algorithm}\label{Algorithm-computing-connected-component} Given a semialgebraic set $E$, we compute the connected components of $E$. In particular, if $E$ is zero-dimensional (and therefore finite), we compute a list of all the points of $E$. \textup{(See Algorithm 16.20 in \cite{basu}.)}
\end{algorithm}

\begin{algorithm}\label{algorithm-checking-projection}
Given semialgebraic sets $E_1 \subset \mathbb{R}^{n_1}$, $E \subset
E_1 \times \mathbb{R}^{n_2}$, we check whether it is the case that

\begin{itemize}
\item[\refstepcounter{equation}\text{(\theequation)}\label{compute1}] {For
every $x \in E_1$, there exists $y \in \mathbb{R}^{n_2}$ such that $(x,y)
\in E$.}
\end{itemize}

If \eqref{compute1} holds, we compute a (possibly discontinuous) semialgebraic function $F: E_1
\rightarrow \mathbb{R}^{n_2}$ such that $(x, F(x)) \in E$ for all $x \in E_1$. \textup{(See Algorithm 11.3 as well as Section 5.1 in \cite{basu}.)}
\end{algorithm}

\begin{algorithm}\label%
{Algorithm-solution-to-module-ofPolynmialEq} Given a matrix of polynomials $%
\left[ A_{ij}\right] _{i=1,\cdots ,I;\text{ }j=1,\cdots ,J}$ with each $%
A_{ij}\in \mathbb{R}\left[ x_{1},\cdots ,x_{n}\right] $, we produce a list
of generators for the $\mathbb{R}\left[ x_{1},\cdots ,x_{n}\right] $-module
of all solutions $\vec{P}=\left( P_{1},\cdots ,P_{J}\right) $ (each $%
P_{j}\in \mathbb{R}\left[ x_{1},\cdots ,x_{n}\right] $) of the equations 
\begin{equation*}
\sum_{j=1}^{J}A_{ij}P_{j}=0\text{ (}i=1,\cdots ,I\text{).}
\end{equation*}%
Moreover, given $\left[ A_{ij}\right] $ as above, and given $\vec{Q}=\left(
Q_{1},\cdots ,Q_{I}\right) $ with each $Q_{i}\in \mathbb{R}\left[
x_{1},\cdots ,x_{n}\right] $, we decide whether the equations%
\begin{equation*}
\sum_{j=1}^{J}A_{ij}P_{j}=Q_{i}\text{ (}i=1,\cdots ,I\text{)}
\end{equation*}%
have a solution $\vec{P}=\left( P_{1},\cdots ,P_{J}\right) $ with $%
P_{1},\cdots ,P_{J}\in \mathbb{R}\left[ x_{1},\cdots ,x_{n}\right] $; if
there is a solution $\vec{P}$, then we produce one. {\textup{(See Sections 3.5-3.7 in \cite{grobner}.)}}
\end{algorithm}

\begin{algorithm}\label{Algorithm-Graph-Decomposition-Algorithm} (Graph Decomposition Algorithm)
Given a semialgebraic set $E\subset \mathbb{R}^{q}$, we compute a partition
of $E$ into finitely many semialgebraic sets $E_{\nu }$, for each of which
there is an invertible linear map $T_{\nu }:\mathbb{R}^{q}\rightarrow 
\mathbb{R}^{q}$ such that $T_{\nu }E_{\nu }$ has the form 
\begin{equation*}
T_{\nu }E_{\nu }=\left\{ 
\begin{array}{c}
\left( x_{1},\cdots ,x_{n_{\nu }},y_{1},\cdots ,y_{q-n_{\nu }}\right) \in 
\mathbb{R}^{q}: \\ 
\left( x_{1},\cdots ,x_{n_{\nu }}\right) \in U_{\nu },y_{\mu }=G_{\mu \nu
}\left( x_{1},\cdots ,x_{n_{\nu }}\right) \text{ } \\ 
\text{for }\mu =1,\cdots ,q-n_{\nu }%
\end{array}%
\right\} ,
\end{equation*}%
where $U_{\nu }\subset \mathbb{R}^{n_{\nu }}$ is a semialgebraic open set
and $G_{\mu \nu }:U_{\nu }\rightarrow \mathbb{R}$ is semialgebraic.
Moreover, we compute the above $T_{\nu },U_{\nu }$, and $G_{\mu \nu }$. {\textup{(See Algorithm 11.3 in \cite{basu}.)}}
\end{algorithm}

\subsection{Elimination of Quantifiers}

In this subsection, we discuss ``elimination of quantifiers", a powerful tool to show that
certain sets are semialgebraic, and to compute those sets.

The sets in question consist of all $(x_1, \cdots, x_n) \in \mathbb{R}^n$
that satisfy a certain condition $\Phi(x_1, \cdots, x_n)$. Here, $\Phi(x_1,
\cdots, x_n)$ is a statement in a formal language, the ``first order
predicate calculus for the theory of real closed fields".

Rather than giving careful definitions, we illustrate with a few examples,
and refer the reader to \cite{carlos,basu}.

\begin{itemize}
\item If $E \subset \mathbb{R}^{n_1} \times \mathbb{R}^{n_2}$ is a given
semialgebraic set, and if $\pi: \mathbb{R}^{n_1} \times \mathbb{R}^{n_2}
\rightarrow \mathbb{R}^{n_1}$ denotes the natural projection, then we can
compute the semialgebraic set $\pi E$, because $\pi E$ consists of all $%
(x_1, \cdots, x_{n_1}) \in \mathbb{R}^{n_1}$ satisfying the condition
\begin{equation*}
\Phi (x_1, \cdots, x_{n_1}): \left(\exists y_1 \right)\cdots \left(\exists
y_{n_2} \right) \left( (x_1, \cdots, x_{n_1}, y_1, \cdots, y_{n_2}) \in
E\right).
\end{equation*}

\item Suppose $E \subset \mathbb{R}^n$ is semialgebraic. Then we can compute 
$E^{\text{closure}}$, the closure of $E$, because $E^{\text{closure}}$
consists of all $(x_1, \cdots, x_n)$ satisfying the condition
\begin{equation*}
\Phi (x_{1},\cdots ,x_{n}):\left( \forall \varepsilon >0\right) \left(
\exists y_{1}\right) \cdots \left( \exists y_{n}\right) \left( \left[ \left(
y_{1},\cdots ,y_{n}\right) \in E\right] \wedge \left[ \left(
x_{1}-y_{1}\right) ^{2}+\cdots +\left( x_{n}-y_{n}\right) ^{2}<\varepsilon
^{2}\right] \right) \text{.}
\end{equation*}%
In particular, $E^{\text{closure}}$ is semialgebraic.

\item Let $E,\underline{E}\subset \mathbb{R}^{n}$ be given semialgebraic
sets. Then we can compute the semialgebraic set 
\begin{equation*}
\tilde{E}=\left\{ 
\begin{array}{c}
\left( x_{1},\cdots ,x_{n},\underline{x}_{1},\cdots ,\underline{x}%
_{n}\right) \in E\times \underline{E}:\left( \underline{x}_{1},\cdots ,%
\underline{x}_{n}\right) \text{ is at least as close as} \\ 
\text{any point of }\underline{E}\text{ to }\left( x_{1},\cdots
,x_{n}\right) 
\end{array}%
\right\} ,
\end{equation*}%
because $\tilde{E}$ consists of all $\left( x_{1},\cdots ,x_{n},\underline{x}%
_{1},\cdots ,\underline{x}_{n}\right) \in \mathbb{R}^{2n}$ satisfying the
condition 
\begin{multline*}
\Phi \left( x_{1},\cdots ,x_{n},\underline{x}_{1},\cdots ,\underline{x}%
_{n}\right) :\left[ \left( x_{1},\cdots ,x_{n}\right) \in E\right] \wedge %
\left[ \left( \underline{x}_{1},\cdots ,\underline{x}_{n}\right) \in 
\underline{E}\right] \wedge  \\
\left[ \left( \forall y_{1}\right) \cdots \left( \forall y_{n}\right)
\left\{ \left[ \left( y_{1},\cdots ,y_{n}\right) \in \underline{E}\right]
\rightarrow \left( x_{1}-\underline{x}_{1}\right) ^{2}+\cdots +\left( x_{n}-%
\underline{x}_{n}\right) ^{2}\leq \left( x_{1}-y_{1}\right) ^{2}+\cdots
+\left( x_{n}-y_{n}\right) ^{2}\right\} \right] \text{.}
\end{multline*}
\end{itemize}

\subsection{} We present the following trivial algorithm.
\begin{algorithm}\label{algorithmxxx}
Given a semialgebraic function $F: E \rightarrow \mathbb{R}$ defined on a semialgebraic set $E \subset \mathbb{R}^n$, we produce a nonzero polynomial $P(x,t)$ on $\mathbb{R}^n \times \mathbb{R}$ such that $P(x,F(x))=0$ for all $x \in E$.
\end{algorithm}

\begin{proof}[\textbf{Explanation}]
$F$ is specified to us by expressing its graph $\Gamma = \{(x,F(x)): x \in E\}$ as a Boolean combination of sets of the form $\{P > 0\}$ or $\{P = 0\}$ for nonzero polynomials $P$ on $\mathbb{R}^n \times \mathbb{R}$. Therefore, we can trivially express $\Gamma$ as a disjoint union over $\nu=1,\cdots, N$ of nonempty sets of the form 
\[
\Gamma_\nu = \left\{ (x,t) \in \mathbb{R}^n \times \mathbb{R}: \tilde{P}_{\nu i} (x,t) >0 \text{ for } i =1,\cdots, i_{\max}(\nu), P_{\nu j } (x,t) =0 \text{ for } j = 1, \cdots, j_{\max}(\nu) \right\}
\]
with each $i_{\max}(\nu), j_{\max}(\nu) \geq 0$. 
No $\Gamma_\nu$ can be open in $\mathbb{R}^n \times \mathbb{R}$, because ${\Gamma}_\nu \subset \Gamma$; hence $j_{\max}(\nu) \geq 1$ for each $\nu$. We take $P(x,t) = \prod_{\nu=1}^N P_{\nu 1} (x,t)$.
\end{proof}

\subsection{} Next, we present several refinements of Algorithm \ref{Algorithm-Graph-Decomposition-Algorithm}. We explain these algorithms in detail, although experts in computational algebraic geometry will find them routine.

To prepare the way, we present the following algorithms.

\begin{algorithm}
\label{algorithm6.1}Given a nonempty open semialgebraic subset $U\subset 
\mathbb{R}^{n}$ and a semialgebraic function $F:U\rightarrow \mathbb{R}$, we
compute semialgebraic sets $U_{junk}$, $U_{1},\cdots ,U_{N}\subset \mathbb{R}%
^{n}$, with the following properties

\begin{itemize}
\item $U$ is the disjoint union of $U_{junk}, U_1, \cdots, U_N$.

\item $U_{junk}$ has dimension strictly less than $n$.

\item Each $U_\nu$ ($\nu = 1, \cdots, N$) is open in $\mathbb{R}^n$.

\item $F|_{U_\nu}$ is continuous, for each $\nu=1,\cdots,N$.
\end{itemize}
\end{algorithm}

\begin{proof}[\textbf{Explanation}]
We start by recalling a useful property of roots of polynomials. Fix $D\geq
1 $. Given $a_{0},a_{1},\cdots ,a_{D-1}\in \mathbb{R}$, we write 
\begin{equation*}
\rt_{1}(a_{0},\cdots ,a_{D-1})\leq \rt_{2}(a_{0},\cdots ,a_{D-1})\leq \cdots %
\leq \rt_{D}(a_{0},\cdots ,a_{D-1})
\end{equation*}%
to denote the real parts of the roots of the polynomial 
\begin{equation*}
z^{D}+a_{D-1}z^{D-1}+\cdots +a_{0}=0,\text{ with multiplicities counted,}
\end{equation*}%
sorted in ascending order. Then we recall that the map $(a_{0},\cdots
,a_{D-1})\mapsto \rt_{k}(a_{0},\cdots ,a_{D-1})$ is continuous and
semialgebraic, for each fixed $k=1,\cdots ,D$.

Now let $U\subset \mathbb{R}^{n}$ be nonempty open and semialgebraic, and
let $F:U\rightarrow \mathbb{R}$ be semialgebraic. We can find a nonzero polynomial $%
P(x_{1},\cdots ,x_{n},z)$ such that%
\begin{equation*}
P\left( x_{1},\cdots ,x_{n},F\left( x_{1},\cdots ,x_{n}\right) \right) =0
\end{equation*}%
for all $\left( x_{1},\cdots ,x_{n}\right) \in U$. Note that $P\left(
x_{1},\cdots ,x_{n},z\right) $ cannot be independent of $z$, since $U$ is a
nonempty open subset of $\mathbb{R}^{n}$. Thus, we may write 
\begin{equation*}
P\left( x_{1},\cdots ,x_{n},z\right) =a\left( x\right)
z^{D}+\sum_{i=0}^{D-1}b_{i}\left( x\right) z^{i}
\end{equation*}%
for polynomials $a$, $b_{i}$ with $a$ nonzero. (We write $x$ to
denote $\left( x_{1},\cdots ,x_{n}\right) $.) We define $U_{junk,0}=\left\{
x\in U:a\left( x\right) =0\right\} $; thus, $U_{junk,0}\subset U$ is
semialgebraic and has dimension $\leq n-1$. On $U\setminus U_{junk,0}$, 
\begin{equation}
z=F\left( x\right) \text{ satisfies }z^{D}+\sum_{i=0}^{D-1}a_{i}\left(
x\right) z^{i}=0\text{, where }a_{i}\left( x\right) =\frac{b_{i}\left(
x\right) }{a\left( x\right) }\text{.}  \label{sec6-1}
\end{equation}%
Each $a_i \left( \cdot \right) $ is a continuous semialgebraic
(rational) function on $U\setminus U_{junk,0}$.

Thanks to \eqref{sec6-1}, we have $F(x)=\rt_k(a_0(x),a_1(x),%
\cdots,a_{D-1}(x))$ for some $k$ (each $x\in U\setminus U_{junk,0}$).

For each $k=1,\cdots ,D$, let%
\begin{equation*}
U_{1,k}=\left\{ 
\begin{array}{c}
x\in U\setminus U_{junk,0}:F\left( x\right) =\rt_{k}\left( a_{0}\left(
x\right) ,a_{1}\left( x\right) ,\cdots ,a_{D-1}\left( x\right) \right) ,%
\text{ but} \\ 
F\left( x\right) \not=\rt_{k^{\prime }}\left( a_{0}\left( x\right)
,a_{1}\left( x\right) ,\cdots ,a_{D-1}\left( x\right) \right) \text{ for all 
}k^{\prime }<k%
\end{array}%
\right\} .
\end{equation*}%
Thus $U$ is the disjoint union of $U_{junk,0}$ and the $U_{1,k}$ ($%
k=1,\cdots ,D$). Moreover, each $U_{1,k}$ is a semialgebraic subset of $%
\mathbb{R}^{n}$, and $F\left( x\right) =\rt_{k}\left( a_{0}\left( x\right)
,a_{1}\left( x\right) ,\cdots ,a_{D-1}\left( x\right) \right) $ on $U_{1,k}$%
. Since the $a_{i}\left( x\right) $ are continuous on $U\setminus U_{junk,0}$%
, and since $\left( a_{0},\cdots ,a_{D-1}\right) \mapsto \rt_{k}\left(
a_{0},\cdots ,a_{D-1}\right) $ is continuous, it follows that $F|_{U_{1,k}}$
is continuous.

We now compute semialgebraic sets $U_{k,junk},U_{k}$ with the following
properties (see Section \ref{section-known-algorithms})

\begin{itemize}
\item $U_{1,k}$ is the disjoint union of $U_k$ and $U_{k,junk}$.

\item $U_{k,junk}$ has dimension $\leq n-1$.

\item $U_k$ is open in $\mathbb{R}^n$.
\end{itemize}
We define $U_{junk}=U_{junk,0}\cup \bigcup\limits_{k=1}^{D}U_{k,junk}$.

Thus, $U_{junk}$ is semialgebraic and has dimension $\leq n-1$; moreover, $U$
is the disjoint union of the $U_{junk}$, $U_{1},\cdots ,U_{D}$. Also, $%
U_{k}\subset \mathbb{R}^{n}$ is open, and $F|_{U_{k}}$ is continuous, for
each $k=1,\cdots ,D$. This concludes our explanation of Algorithm \ref%
{algorithm6.1}. \end{proof}

\begin{algorithm}
\label{algorithm6.2} Given a nonempty semialgebraic open set $U\subset 
\mathbb{R}^n$, and given a semialgebraic function $F: U \rightarrow \mathbb{R%
}$, we compute semialgebraic subsets $U_{junk},U_1,\cdots,U_N \subset U$
with the following properties:

\begin{itemize}
\item $U$ is the disjoint union of $U_{junk}, U_1,\cdots,U_N$.

\item $U_{junk}$ has dimension $\leq n-1$.

\item For each $\nu$ ($1\leq \nu \leq N$), $U_\nu$ is a nonempty open subset
of $\mathbb{R}^n$ and $F|_{U_\nu}$ is real-analytic.
\end{itemize}
\end{algorithm}

\begin{proof}[\textbf{Explanation}]
Using Algorithm \ref{algorithm6.1} we may easily reduce matters to the case
in which $F$ is continuous on $U$. As in the explanation of Algorithm \ref%
{algorithm6.1}, we can produce a polynomial%
\begin{equation*}
P\left( x,z\right) =a\left( x\right) z^{D}+\sum_{i=0}^{D-1}b_{i}\left(
x\right) z^{i}\text{ on }\mathbb{R}^{n}\times \mathbb{R},
\end{equation*}%
with $a\left( x\right) $ nonzero, such that 
\begin{equation*}
P\left( x,F\left( x\right) \right) =0\text{ for all }x\in U\text{.}
\end{equation*}%
We partition $U$ into the following semialgebraic sets: 
\begin{equation*}
\hat{U}_{l}=\left\{ x\in U:\left( \partial _{z}\right) ^{l^{\prime
}}P|_{\left( x,F\left( x\right) \right) }=0\text{ for all }l^{\prime }\leq l,%
\text{ but not for }l^{\prime }=l+1\right\}
\end{equation*}%
for $l=0,1,\cdots ,D-1$;%
\begin{equation*}
\hat{U}_{final}=\left\{ x\in U:\left( \partial _{z}\right) ^{l^{\prime
}}P|_{\left( x,F\left( x\right) \right) }=0\text{ for all }l^{\prime
}=0,1,\cdots ,D\right\} \text{.}
\end{equation*}%
Taking $l^{\prime }=D$ in the definition of $\hat{U}_{final}$, we see that 
\begin{equation*}
\hat{U}_{final}\subset \left\{ x\in U:a\left( x\right) =0\right\} ,
\end{equation*}%
and therefore $\hat{U}_{final}$ has dimension $\leq n-1$. For fixed $l$ ($%
0\leq l\leq D-1$), let $Q_{l}(x,\xi )=\left( \partial _{\xi }\right)
^{l}P\left( x,\xi \right) $. Then 
\begin{equation}
\text{for any }x\in \hat{U}_{l},\text{ we have }Q_{l}\left( x,F\left(
x\right) \right) =0\text{ but }\partial _{\xi }Q_{l}|_{\left( x,F\left(
x\right) \right) }\not=0\text{.}  \label{sec6-2}
\end{equation}%
Applying an algorithm from Section \ref{section-known-algorithms}, we
partition $\hat{U}_{l}$ into a semialgebraic set $U_{l,junk}$ of dimension $%
\leq n-1$, and a semialgebraic open set $U_{l}$ (possibly empty). Thus, our
original set $U$ is partitioned into the semialgebraic sets, $U_{l}$ ($l=0,1,\cdots ,D-1$), and
\begin{equation*}
U_{junk}:=\hat{U}_{final}\cup \bigcup_{l=1}^{D-1}U_{l,junk}\text{,}
\end{equation*}%
with $\dim U_{junk}\leq n-1$; the $U_{l}$
are open.

We now show that $F|_{U_{l}}$ is real-analytic for each $l=0,\cdots ,D-1$.
Fix $x^{0}\in U_{l}\subset \hat{U}_{l}$, and let $\xi ^{0}=F(x^{0})$. By (%
\ref{sec6-2}), we have 
$Q_{l}\left( x^{0},\xi ^{0}\right) =0 $
but $\partial _{\xi}Q_{l}\left( x^{0},\xi ^{0}\right) \not=0$. Hence, thanks
to the real analytic implicit function theorem, we know that for small enough $%
\varepsilon >0$, the set 
\begin{equation*}
\left\{ \left( x,\xi \right) \in \mathbb{R}^{n}\times \mathbb{R}:\left\vert
x-x^{0}\right\vert ,\left\vert \xi -\xi ^{0}\right\vert <\varepsilon
,Q_{l}\left( x,\xi \right) =0\right\}
\end{equation*}%
is contained in the graph of a real-analytic function $z=\psi \left(
x\right) $ defined on some neighborhood of $x^{0}$.

For $x\in U_{l}\subset \hat{U}_{l}$, \eqref{sec6-2} tells us that $%
Q_{l}(x,F(x))=0$. Moreover, $F$ is continuous on $U$, hence on $U_{l}$.
Therefore, for small enough $\delta >0$, we have 
\begin{eqnarray*}
&&\left\{ \left( x,F\left( x\right) \right) :\left\vert x-x^{0}\right\vert
<\delta \right\} \\
&\subset &\left\{ \left( x,\xi \right) \in \mathbb{R}^{n}\times \mathbb{R}%
:\left\vert x-x^{0}\right\vert ,\left\vert \xi -\xi ^{0}\right\vert
<\varepsilon ,Q_{l}\left( x,\xi \right) =0\right\} .
\end{eqnarray*}%
Therefore, $F$ agrees with $\psi $ on the ball $B(x^{0},\delta )$. This
proves that $F$ is real-analytic in a neighborhood of $x^{0}$. Since $%
x^{0}\in U_{l}$ was picked arbitrarily, we now know that $F$ is
real-analytic on $U_{l}$.

We now delete any empty $U_l$ from our list of open sets $U_{0},\cdots ,U_{D-1}$. The remaining list, together with the set $U_{junk}$ defined and computed above have all the properties asserted in Algorithm \ref%
{algorithm6.2}. The explanation of that algorithm is complete.
\end{proof}

\begin{algorithm}
\label{algorithm6.3}Given a nonempty semialgebraic set $U\subseteq \mathbb{R}%
^{n}$, and given semialgebraic functions $F_{1},\cdots ,F_{K}:U\rightarrow 
\mathbb{R}$, we compute a partition of $U$ into semialgebraic subsets $%
U_{junk},U_{1},\cdots ,U_{N}$ with the following properties

\begin{itemize}
\item $U_{junk}$ has dimension $\leq n-1$.

\item Each $U_\nu$ is a nonempty open subset of $\mathbb{R}^n$.

\item The restriction of each $F_k$ to each $U_\nu$ is real-analytic.
\end{itemize}
\end{algorithm}

\begin{proof}[\textbf{Explanation}]
We proceed by recursion on $K$, using Algorithm \ref{algorithm6.2}. If $K=1$%
, Algorithm \ref{algorithm6.2} does the job. If $K>1$, then by applying
Algorithm \ref{algorithm6.2}, we partition $U$ into $\hat{U}_{junk}$ and $%
\hat{U}_{i}$ ($i=1,\cdots ,I$) (semialgebraic subsets), with the following
properties: $\dim \hat{U}_{junk}\leq n-1$; each $\hat{U}_{i}$ is non-empty
and open; $F_{K}|_{\hat{U}_{i}}$ is real-analytic. We recursively apply
Algorithm \ref{algorithm6.3}, with $K$ replaced by $K-1$, to the non-empty
open semialgebraic set $\hat{U}_{i}$ and the semialgebraic functions $%
F_{1}|_{\hat{U}_{i}},\cdots ,F_{K-1}|_{\hat{U}_{i}}$. Thus, we partition $%
\hat{U}_{i} $ into semialgebraic sets $\hat{U}_{i,junk}$ and $\hat{U}_{i,\nu
}$ ($\nu =1,\cdots ,\nu _{\max }(i))$, with the following properties: $\hat{U%
}_{i,junk}$ has dimension $\leq n-1$; each $\hat{U}_{i,\nu }$ is non-empty
and open; $F_{k}|_{\hat{U}_{i,\nu }}$ is real-analytic for $k=1,\cdots ,K-1$.

In fact, $F_{k}|_{\hat{U}_{i,\nu }}$ is real-analytic for $k=1,\cdots ,K$,
since $F_{K}|_{\hat{U}_{i,\nu }}$ is real-analytic.

We now define $U_{junk}=\hat{U}_{junk}\cup \bigcup_{i=1}^{I}\hat{U}%
_{i,junk}, $ and let $U_{1},\cdots ,U_{N}$ be an enumeration of the $\hat{U}%
_{i,\nu }$ ($i=1,\cdots ,I,\nu =1,\cdots ,\nu _{\max }\left( i\right) $).

These sets are semialgebraic; $\dim U_{junk}\leq n-1$; each $U_{\nu }$ is
nonempty and open; and $F_{k}|_{U_{\nu }}$ is real-analytic ($k=1,\cdots ,K$%
) for each $\nu $. This concludes the explanation of Algorithm \ref%
{algorithm6.3}.
\end{proof}

Now we can present our refinements of Algorithm \ref{Algorithm-Graph-Decomposition-Algorithm}.

\begin{algorithm}
\label{algorithm-6.4}Given a semialgebraic set $E\subset \mathbb{R}^{q}$ of
dimension $\leq n$, and given semialgebraic functions $F_{1},\cdots
,F_{K}:E\rightarrow \mathbb{R}$, we compute a decomposition of $E$ into
semialgebraic sets $E_{junk}$, $E_{1},\cdots ,E_{N}$ with the following
properties:

\begin{itemize}
\item $\dim E_{junk} \leq n-1$

\item For each $\nu = 1, \cdots, N$ there exists an invertible linear map $%
T_\nu: \mathbb{R}^q \rightarrow \mathbb{R}^q$ such that $T_\nu E_\nu =
\{(x,y) \in \mathbb{R}^n \times \mathbb{R}^{q-n}: x \in U_\nu, y = G_\nu(x)
\}$, where $U_\nu \subset \mathbb{R}^{q-n}$ is a semialgebraic nonempty open
set and $G_\nu$ is a semialgebraic map.

\item For each $\nu = 1, \cdots, N$ and each $\lambda = 1, \cdots, K$, we
have $F_\lambda \circ T_\nu^{-1} (x,y) = H_{\lambda \nu} (x)$ for all $x \in
U_\nu$, $y = G_\nu (x)$ (i.e., for all $(x,y) \in T_\nu E_\nu$), where $%
H_{\lambda \nu}: U_\nu \rightarrow \mathbb{R}$ is a semialgebraic function.

\item Moreover, we compute $T_\nu, G_\nu, H_{\lambda \nu}$ as above. Also,
for each $\nu = 1, \cdots,N$ and each $\mu= 1,\cdots, q-n_\nu$, we compute a
nonzero polynomial $P_{\mu \nu} (x_1,\cdots, x_n, y_\mu)$ such that $%
P_{\mu \nu} (x_1, \cdots, x_n,y_\mu)=0$ for all $(x_1,\cdots, x_n,
y_1,\cdots, y_{q-n}) \in T_{\nu} E_\nu$.

\item Finally, for each $\nu =1,\cdots ,N$ and each $\lambda =1,\cdots ,K$,
we compute a nonzero polynomial $\hat{P}_{\lambda \nu }(x_{1},\cdots
,x_{n},z_{\lambda })$ such that $\hat{P}_{\lambda \nu }(x_{1},\cdots
,x_{n},z_{\lambda })=0$ for $(x_{1},\cdots ,x_{n},y_{1},\cdots ,y_{q-n})\in
T_{\nu }E_{\nu }$ and $z_{\lambda }=F_{\lambda }\circ T_{\nu
}^{-1}(x_{1},\cdots ,x_{n},y_{1},\cdots ,y_{q-n})$.
\end{itemize}
\end{algorithm}

\begin{proof}[\textbf{Explanation}]
We first apply Algorithm \ref{Algorithm-Graph-Decomposition-Algorithm} to $E$%
, and throw away all the $E_\nu$ with $\dim E_\nu <n $ into $E_{junk}$.
Thus, $E_{junk} $ is a semialgebraic set of dimension $\leq n-1$. There
remain the $E_\nu$ with $\dim E_\nu=n$. For each such an $E_\nu$, Algorithm %
\ref{Algorithm-Graph-Decomposition-Algorithm} provides $T_\nu, U_\nu, G_\nu$
as described in Algorithm \ref{algorithm-6.4}. Moreover, once we know $U_\nu$
and $G_\nu$, we compute a nonzero polynomial $P_{\mu
\nu}(x_1,\cdots, x_n,y_\mu)$ as in the statement of Algorithm \ref%
{algorithm-6.4}.

Next, since $T_\nu E_\nu = \{(x,y): x \in U_\nu, y = G_\nu (x) \}$ with $%
U_\nu, G_\nu$ semialgebraic, we can compute semialgebraic functions $%
H_{\lambda \nu}: U_{\nu} \rightarrow \mathbb{R}$ as in the statement of
Algorithm \ref{algorithm-6.4}. Once we know $U_\nu$ and $H_{\lambda \nu}$,
we can easily compute nonzero polynomials $\hat{P}_{\lambda \nu}$ as in the
statement of Algorithm \ref{algorithm-6.4}. This completes the explanation
of Algorithm \ref{algorithm-6.4}.
\end{proof}

\begin{algorithm}
\label{algorithm-6.4'} Given a semialgebraic set $E \subset \mathbb{R}^q$ of
dimension $\leq n$, and given semialgebraic functions $F_1,\cdots, F_K: E
\rightarrow \mathbb{R}$, we compute objects $E_{junk}, E_1,\cdots, E_N,
T_\nu, U_\nu, G_\nu, H_{\lambda \nu}, P_{\mu \nu}, \hat{P}_{\mu \nu}$ having
all the properties asserted in Algorithm \ref{algorithm-6.4}, but also
satisfying the following:

\begin{itemize}
\item Each $U_\nu$ is connected.

\item Each $G_\nu, H_{\lambda \nu}$ is real analytic (on $U_{\nu}$).
\end{itemize}
\end{algorithm}

\begin{proof}[\textbf{Explanation}]
First, we execute Algorithm \ref{algorithm-6.4}. For each $\nu $, we write $%
G_{\mu \nu }(x)$ to denote the $\mu ^{\text{th}}$ component of $G_\nu (x)\in 
\mathbb{R}^{q-n}$. We apply Algorithm \ref{algorithm6.3} to the open set $%
U_{\nu }$ and the list of functions consisting of the $G_{\mu \nu }\quad
(\mu =1,\cdots ,q-n)$ and the $H_{\lambda \nu }\quad (\lambda =1,\cdots ,K)$.

Thus, $U_{\nu }$ is partitioned into semialgebraic open sets $U_{\nu i}\quad
(i=1,\cdots ,I(\nu ))$ and a set $U_{\nu ,junk}$ of dimension $\leq n-1$; on
each $U_{\nu i}$, all the $G_{\mu ,\nu }$ and $H_{\lambda \nu }$ are
real-analytic. Using Algorithm \ref{Algorithm-computing-connected-component}
we further partition $U_{\nu i}$ into its connected components $U_{\nu ij}$.%
\newline
We now define: 
\begin{eqnarray*}
E_{junk}^{\ast } &=&E_{junk}\cup \bigcup_{\nu }T_{\nu }^{-1}\left\{ \left(
x,y\right) :x\in U_{\nu ,junk},y=G_{\nu }\left( x\right) \right\} \\
&&%
\begin{array}{ll}
T_{\nu ij}^{\ast }=T_{\nu } & E_{\nu ij}^{\ast }=T_{\nu }^{-1}\left\{ \left(
x,y\right) :x\in U_{\nu ij},y=G_{\nu }\left( x\right) \right\} \\ 
U_{\nu ij}^{\ast }=U_{\nu ij} & P_{\mu \nu ij}^{\ast }=P_{\mu \nu } \\ 
G_{\nu ij}^{\ast }=G_{\nu }|_{U_{\nu ij}} & \hat{P}_{\lambda \nu ij}^*=\hat{P}%
_{\lambda \nu } \\ 
H_{\lambda \nu ij}^{\ast }=H_{\lambda \nu }|_{U_{\nu ij}} & 
\end{array}%
\text{.}
\end{eqnarray*}%
One checks easily that $E_{junk}^{\ast }$ is semialgebraic and has dimension 
$\leq n-1$; $E$ is partitioned into $E_{junk}^{\ast }$ and the $E_{\nu
ij}^{\ast }$; each $E_{\nu ij}^{\ast }$ is semialgebraic; $T_{\nu ij}^{\ast
}E_{\nu ij}^{\ast }=\left\{ \left( x,y\right) :x\in U_{\nu ij}^{\ast
},y=G_{\nu ij}^{\ast }\left( x\right) \right\} $; $U_{\nu ij}^{\ast }\subset 
\mathbb{R}^{n}$ is nonempty, open, connected and semialgebraic; $G_{\nu ij}^{\ast
}:U_{\nu ij}^{\ast }\rightarrow \mathbb{R}^q$ is a real-analytic semialgebraic map; $H^{\ast}_{\lambda \nu ij}: U^{\ast}_{\nu ij} \rightarrow \mathbb{R}$ is semialgebraic and real-analytic; 
$F_{\lambda }\circ \left( T_{\nu ij}^{\ast }\right) ^{-1}\left( x,y\right)
=H_{\lambda \nu ij}^{\ast }\left( x\right) $ for $x\in U_{\nu ij}^{\ast }$, 
$y=G_{\nu ij}^{\ast }\left( x\right) $; $P_{\mu ,\nu ij}^{\ast }\left(
x,y_{\mu }\right) =0$ for $\left( x,y_{1},\cdots ,y_{q-n}\right) \in T_{\nu
_{ij}}^{\ast }E_{\nu ij}^{\ast }$; and $\hat{P}_{\lambda \nu ij}^{\ast }\left(
x,z_{\lambda }\right) =0$ for $\left( x,y\right) \in T_{\nu ij}^{\ast
}E_{\nu ij}^{\ast }$ and $z_{\lambda }=F_{\lambda }\circ \left( T_{\nu
}^{\ast }\right) ^{-1}\left( x,y\right) $. \newline
The above objects have all the properties asserted in Algorithm \ref%
{algorithm-6.4'}. This completes the explanation of Algorithm \ref%
{algorithm-6.4'}.
\end{proof}

Our main refinement of Algorithm \ref{Algorithm-Graph-Decomposition-Algorithm} is the following.

\begin{algorithm}
\label{algorithm-6.5}Given a semialgebraic set $E\subset \mathbb{R}^{q}$,
and given semialgebraic functions $F_{1},\cdots ,F_{K}:E\rightarrow \mathbb{R%
}$, we compute a partition $E_{1},\cdots ,E_{N}$ of $E$, satisfying the following for each $\nu$:

\begin{itemize}
\item $E_{\nu }$ has the form 
\begin{equation*}
E_{\nu }=T_{\nu }^{-1}\left\{ \left( x,y\right) \in \mathbb{R}^{n_{\nu
}}\times \mathbb{R}^{q-n_{\nu }}:x\in U_{\nu },y=G_{\nu }\left( x\right)
\right\} \text{,}
\end{equation*}%
where $T_{\nu }:\mathbb{R}^{q}\rightarrow \mathbb{R}^{q}$ is an invertible
linear map, $U_{\nu }\subset \mathbb{R}^{n_{\nu }}$ is a connected, open
semialgebraic set, and $G_{\nu }:U_{\nu }\rightarrow \mathbb{R}^{q-n_{\nu }}$
is real-analytic and semialgebraic.

\item For each $\lambda =1,\cdots ,K$, we have 
\begin{equation*}
F_{\lambda }\circ T_{\nu }^{-1}\left( x,G_{\nu }\left( x\right) \right)
=H_{\lambda \nu }\left( x\right) \text{ for all }x\in U_{\nu }\text{,}
\end{equation*}%
where $H_{\lambda \nu }:U_{\nu }\rightarrow \mathbb{R}$ is real-analytic and
semialgebraic.
\end{itemize}

Moreover, we compute the above $T_{\nu },n_{\nu },U_{\nu },G_{\nu
},H_{\lambda \nu }$. In addition, we compute nonzero polynomials $P_{\mu \nu
}\left( x,y_{\nu }\right) $ $\left( \mu =1,\cdots ,q-n_{\nu }\right) $ and $%
\hat{P}_{\lambda \nu }\left( x,z_{\lambda }\right) $ $\left( \lambda
=1,\cdots ,K\right) $ such that

\begin{itemize}
\item $P_{\mu \nu }\left( x,y_{\mu }\right) =0$ for $x\in U_{\nu }$, $\left(
y_{1},\cdots ,y_{q-n_{\nu }}\right) =G_{\nu }\left( x\right) $; and $\hat{P}%
_{\lambda \nu }\left( x,H_{\lambda \nu }\left( x\right) \right) =0$ for $%
x\in U_{\nu }$.
\end{itemize}
\end{algorithm}

\begin{proof}[\textbf{Explanation}]
We proceed by recursion on $\dim E$. If $\dim E=0$, then $E$ consists of
finitely many points, and our task is trivial.\newline
Fix $n\geq 1$, and suppose we can carry out Algorithm \ref{algorithm-6.5}
whenever $\dim E\leq n-1$. We explain how to carry out the algorithm when $%
\dim E=n$. \newline
To do so, we apply Algorithm \ref{algorithm-6.4'}. The semialgebraic sets $%
E_{\nu }$ have the form desired for Algorithm \ref{algorithm-6.5}, and we
obtain also the corresponding $T_{\nu },U_{\nu },G_{\nu },P_{\mu \nu },\hat{P%
}_{\mu \nu }$ (with $n_{\nu }=n$). \newline
However, we still have to deal with the semialgebraic set $E_{junk}$ arising
from Algorithm \ref{algorithm-6.4'}. Since $\dim E_{junk}\leq n-1$, we may
recursively apply Algorithm \ref{algorithm-6.5} to the set $E_{junk}$ and
the functions $F_{1}|_{E_{junk}},\cdots ,F_{K}|_{E_{junk}}$.\newline
It is now trivial to carry out Algorithm \ref{algorithm-6.5} for the given $%
E,F_{1},\cdots ,F_{K}$.
\end{proof}

\subsection{} In this subsection, we explain how to compute generators for the ideal of polynomials that vanish on given semialgebraic set.

The strategy is to apply Algorithm \ref{algorithm-6.5}, and then execute the following procedure.

\begin{algorithm}[Complexify]\label{algorithm-complexify}
Let $\Gamma =\left\{ \left( x,G\left( x\right) \right) :x\in U\right\} $
where $U\subset \mathbb{R}^{n}$ is an open, connected, nonempty
semialgebraic set, and $G\left( x\right) =\left( G_{1}\left( x\right)
,\cdots ,G_{m}\left( x\right) \right) $ is a real-analytic semialgebraic map
from $U$ to $\mathbb{R}^{m}$. Suppose $P_{1}\left( x,t\right) ,\cdots
,P_{m}\left( x,t\right) $ are nonzero (real) polynomials on $\mathbb{R}%
^{n}\times \mathbb{R}$ such that $P_{\mu }\left( x,G_{\mu }\left( x\right)
\right) =0$ on $U$ for all $\mu =1,\cdots ,m$. Given $U,G,P_{1},\cdots ,P_{m}
$ as above, we compute an affine variety $V\subset \mathbb{C}^{n}\times 
\mathbb{C}^{m}$ with the following property: Let $P\in \mathbb{C}\left[
z_{1},\cdots ,z_{n},w_{1},\cdots ,w_{m}\right] $. Then $P$ vanishes
everywhere on $\Gamma $ if and only if $P$ vanishes everywhere on $V$. 

\begin{proof}[Explanation]
We first reduce to the case in which, for each $\mu =1,\cdots ,m$, $\partial
_{t}P_{\mu }\left( x,t\right) $ isn't identically zero on the graph of $%
G_{\mu }:U\rightarrow \mathbb{R}$. To do so, note that if for some $\mu $,
we have 
\begin{equation}
\partial _{t}P_{\mu }\left( x,G_{\mu }\left( x\right) \right) =0\text{ for
all }x\in U,  \label{x1}
\end{equation}%
then we may simply replace $P_{\mu }$ by $\partial _{t}P_{\mu }$, preserving
all the assumptions made in Algorithm \ref{algorithm-complexify}. Note
that we can decide whether (\ref{x1}) holds, and note that $\partial
_{t}P_{\mu }$ is a nonzero polynomial. (Otherwise we would have $P_{\mu
}\left( x,t\right) =P_{\mu }^{\#}\left( x\right) $ for a nonzero polynomial $%
P_{\mu }^{\#}$; our assumption $P_{\mu }\left( x,G_{\mu }\left( x\right)
\right) =0$ would then imply that $P_{\mu }^{\#}\left( x\right) =0$ for all $%
x$ in the nonempty open set $U$, which is impossible.) Each time we replace $%
P_{\mu }$ by $\partial _{t}P_{\mu }$ for some $\mu $, the quantity $%
\sum_{\mu =1}^{m}\deg P_{\mu }$ decreases. Therefore, after finitely many
steps, we arrive at the case in which, for each $\mu =1,\cdots ,m$, $%
\partial _{t}P_{\mu }$ isn't identically zero on the graph of $G_{\mu }$.

From now on, we assume we are in this case. The functions $U\ni x\mapsto
\partial _{t}P_{\mu }\left( x,G_{\mu }\left( x\right) \right) $ $\left( \mu
=1,\cdots ,m\right) $ are real-analytic and nonzero. Hence,
there exists $x^{0}\in U$ such that 
\begin{equation}
\partial _{t}P_{\mu }\left( x^{0},G_{\mu }\left( x^{0}\right) \right) \not=0%
\text{ for each }\mu =1,\cdots ,m.  \label{xx1}
\end{equation}%
We can compute such an $x^{0}\in U$, by standard algorithms in Section \ref{section-known-algorithms}. Let $y^{0}=\left( y_{1}^{0},\cdots ,y_{m}^{0}\right)
=G\left( x^{0}\right) $. Thus, $\left( x^{0},y^{0}\right) \in \Gamma $, and $%
\left( x^{0},y^{0}\right) $ belongs to the algebraic set 
\begin{equation}
W=\left\{ \left( z,w_{1},\cdots ,w_{m}\right) \in \mathbb{C}^{n}\times 
\mathbb{C}^{m}:P_{\mu }\left( z,w_{\mu }\right) =0\text{ for }\mu =1,\cdots
,m\right\} .  \label{xx2}
\end{equation}%
Moreover, thanks to (\ref{xx1}), we know that 

\LAQ{xx3}{The differentials $%
dP_{\mu }\left( x^{0},y^{0}\right) $ $\left( \mu =1,\cdots ,m\right) $ are
linearly independent, where we regard each $P_{\mu }\left( x,w_{\mu }\right) 
$ as a polynomial in $\left( x,w_{1},\cdots ,w_{n}\right) $.} Hence we may
apply the holomorphic implicit function theorem and Lemma \ref{lemma5} in Section \ref{preliminary}. From the holomorphic implicit function theorem we obtain a ball $%
B_{1}\subset \mathbb{C}^{n}$ centered at $x^{0}$, an open ball $B_{2}\subset 
\mathbb{C}^{n}\times \mathbb{C}^{m}$ centered at $\left( x^{0},y^{0}\right) $%
, and a holomorphic map $G^{\mathbb{C}}:B_{1}\rightarrow \mathbb{C}^{m}$
such that $G^{\mathbb{C}}\left( x^{0}\right) =y^{0}$, and 
\begin{equation}
W\cap B_{2}=\left\{ \left( z,G^{\mathbb{C}}\left( z\right) \right) :z\in
B_{1}\right\} \cap B_{2}\text{.}  \label{xx4}
\end{equation}%
We may suppose $B_{1}\cap \mathbb{R}^{n}\subset U$. Note that $\Gamma
\subset W$, hence (\ref{xx4}) gives $G\left( x\right) =G^{\mathbb{C}}\left(
x\right) $, provided $x\in B_{1}\cap \mathbb{R}^{n}$, and 
\begin{equation}
\left( x,G\left( x\right) \right) ,\left( x,G^{\mathbb{C}}\left( x\right)
\right) \in B_{2}.  \label{xx5}
\end{equation}%
Moreover, both $G$ and $G^{\mathbb{C}}$ are continuous, and we have $\left(
x^{0},G\left( x^{0}\right) \right) =\left( x^{0},G^{\mathbb{C}}\left(
x^{0}\right) \right) =\left( x^{0},y^{0}\right) \in B_{2}$. Therefore, (\ref%
{xx5}) holds for all $x\in U$ sufficiently close to $x^{0}$. Consequently, 
\begin{equation}
G\left( x\right) =G^{\mathbb{C}}\left( x\right) \text{ for all }x\in U\text{
sufficiently close to }x^{0}\text{.}  \label{xx6}
\end{equation}%

Returning to (\ref{xx2}) and (\ref{xx3}), we now apply Lemma \ref{lemma5} in Section \ref{preliminary}. Let $V_{1},\cdots ,V_{p}$ be the irreducible components
of $W$. (We can compute them, see Algorithm \ref{Algorithm-computing-connected-component}.) Let $V_{1,\text{reg}},\cdots
,V_{p,\text{reg}}$ denote the set of regular points of $V_{1},\cdots ,V_{p}$%
, respectively. Then Lemma \ref{lemma5} produces an index $%
j_{0}$ and a small ball $\tilde{B}\subset \mathbb{C}^{n}\times \mathbb{C}^{m}
$ about $\left( x^{0},y^{0}\right) $, for which \ the following hold. 
\begin{equation}
W\cap \tilde{B}=V_{j_{0},\text{reg}}\cap \tilde{B}\text{, and }V_{j}\cap 
\tilde{B}=\emptyset \text{ for }j\not=j_{0}.  \label{xx7}
\end{equation}%
In particular, $j_{0}$ is the one and only $j\in \left\{ 1,\cdots ,p\right\} 
$ such that $\left( x^{0},y^{0}\right) \in V_{j}$. Hence, we can compute $%
j_{0}$. Now let $P\left( z,w\right) $ be any polynomial on $\mathbb{C}%
^{n}\times \mathbb{C}^{m}$. We claim that 
\LAQ{xx8}{$P$ vanishes everywhere on $%
\Gamma $ if and only if $P$ vanishes everywhere on $V_{j_{0}}$.}
If (\ref{xx8}) holds, then we can take $V$ in Algorithm \ref{algorithm-complexify} to be $V_{j_{0}}$, and we are done. Thus, the explanation of
Algorithm \ref{algorithm-complexify} is reduced to the task of proving
(\ref{xx8}). 

Suppose $P=0$ everywhere on $\Gamma $. Then $P\left( x,G\left(
x\right) \right) =0$ for all $x$ in a small neighborhood of $x^{0}$ in $%
\mathbb{R}^{n}$. Because $G^{\mathbb{C}}$ is holomorphic in a small
neighborhood of $x^{0}$ in $\mathbb{C}^{n}$, it follows from (\ref{xx6})
that $P\left( z,G^{\mathbb{C}}\left( z\right) \right) =0$ for all $z$ in a
small neighborhood of $x^{0}\in \mathbb{C}^{n}$. Therefore, by (\ref{xx4}),
we have \LAQ{xx9}{$P\left( z,w\right)=0 $ for all $\left( z,w\right) \in W$ sufficiently
close to $\left( x^{0},y^{0}\right) $.} Thanks to (\ref{xx7}) and (\ref{xx9}%
), we have $\left( x^{0},y^{0}\right) \in V_{j_{0},\text{reg}}$, and $%
P\left( z,w\right) =0$ for all $\left( z,w\right) \in V_{j_{0},\text{reg}}$
sufficiently close to $\left( x^{0},y^{0}\right) $. Because $P$ is a
polynomial and $V_{j_{0},\text{reg}}$ is a connected complex-analytic submanifold of $\mathbb{%
C}^{n}$, it follows that $P\left( z,w\right) =0$ for all $\left( z,w\right)
\in V_{j_{0},\text{reg}}$. Because $P$ is continuous and $V_{j_{0},\text{reg}%
}$ is dense in $V_{j_{0}}$, it follows that $P\left( z,w\right) =0$ for all $%
\left( z,w\right) \in V_{j_{0}}$. Thus, we have shown that if $P=0$
everywhere on $\Gamma $, then $P=0$ everywhere on $V_{j_{0}}$. 

Conversely,
suppose $P=0$ everywhere on $V_{j_{0}}$. Then, in particular, $P=0$
everywhere on $V_{j_{0},\text{reg}}$. Applying (\ref{xx7}), we have that $P=0
$ on a small neighborhood of $\left( x^{0},y^{0}\right) $ in $W$. Thanks to (%
\ref{xx4}) and the continuity of $G^{\mathbb{C}}$, this yields $P\left( z,G^{\mathbb{C}}\left( z\right) \right) =0$
for all $z\in \mathbb{C}^{n}$ sufficiently close to $x^{0}$. Consequently,
by (\ref{xx6}), we have $P\left( x,G\left( x\right) \right) =0$ for all $%
x\in U$ sufficiently close to $x^{0}$. However, we assume in Algorithm \ref{algorithm-complexify} that $G$ is real-analytic and $U$ is
connected. Therefore, $P\left( x,G\left( x\right) \right) =0$ for all $x$ $%
\in U$ . Thus, $P=0$ everywhere on $\Gamma $. This completes the proof of (%
\ref{xx8}) and the explanation of Algorithm \ref{algorithm-complexify}. 
\end{proof}
\end{algorithm}

\begin{algorithm}\label{Algorithm-generators-for-ideal-vanishing-on-semiA}
Given a semialgebraic set $E\subset \mathbb{R}^{q}$, we compute generators
for the ideal of all polynomials $P\in \mathbb{R}\left[ x_{1},\cdots ,x_{q}%
\right] $ that vanish on $E$.
\end{algorithm}

\begin{proof}[Explanation]
Applying Algorithm \ref{algorithm-6.5} (with, say, a single $%
F_{\lambda }\equiv 1$), we obtain a partition of $E$ into finitely many
semialgebraic subsets $E_{\nu }$ $(\nu=1,\cdots,N)$, together with invertible linear maps $%
T_{\nu }:\mathbb{R}^{q}\rightarrow \mathbb{R}^{q}$, such that each $T_{\nu
}E_{\nu }$ is given in the form assumed in Algorithm \ref{algorithm-complexify}. For each $\nu $, Algorithm \ref{algorithm-6.5} also
produces polynomials $P_{\mu \nu }$ $\left( \mu =1,\cdots ,q-n_{\nu }\right) 
$ that play the r\^{o}le of the $P_{\mu }$ in Algorithm \ref{algorithm-complexify}. From Algorithm \ref{algorithm-complexify}, we
therefore obtain for each $\nu $ an affine variety $V_{\nu }\subset \mathbb{%
C}^{q}$ such that for any $P\in \mathbb{C}\left[ z_{1},\cdots ,z_{q}\right] $%
, $P|_{T_{\nu}E_{\nu }}=0$ if and only if $P|_{V_{\nu }}=0$. Algorithm \ref{algorithm-generators-ideal-algebraic-set} produces generators $P_{\nu ,1},\cdots ,P_{\nu ,G\left(
\nu \right) }\in \mathbb{C}\left[ z_{1},\cdots ,z_{q}\right] $ for the ideal
of polynomials in $\mathbb{C}\left[ z_{1},\cdots ,z_{q}\right] $ that vanish
on $V_{\nu }$. Thus, $P_{\nu ,1},\cdots ,P_{\nu ,G\left( \nu \right) }$
generate the ideal of polynomials in $\mathbb{C}\left[ z_{1},\cdots ,z_{q}%
\right] $ that vanish on $T_{\nu} E_{\nu }\subset \mathbb{R}^{q}$. Consequently,
the real and imaginary parts of the $P_{\nu ,g}$ $\left( g=1,\cdots ,G\left(
\nu \right) \right) $ generate the ideal of all polynomials in $\mathbb{R}%
\left[ x_{1},\cdots ,x_{q}\right] $ that vanish on $T_{\nu}E_{\nu }$. It is now
trivial to produce generators for the ideal $\mathcal{I}_{\nu }$ of all
polynomials in $\mathbb{R}\left[ x_{1},\cdots ,x_{q}\right] $ that vanish on 
$E_{\nu }$. Because $E$ is the union of the $E_{\nu }$ the ideal of all
polynomials in $\mathbb{R}\left[ x_{1},\cdots ,x_{q}\right] $ that vanish on 
$E$ is equal to $\mathcal{I}_{1}\cap \cdots \cap \mathcal{I}_{N}$. We have
produced generators for each $\mathcal{I}_{\nu }$. Using Algorithm \ref{Algorithm-solution-to-module-ofPolynmialEq}, we can now produce generators for their
intersection. Thus, we compute generators for the ideal of all real
polynomials that vanish on $E$. This completes our explanation of Algorithm \ref{Algorithm-generators-for-ideal-vanishing-on-semiA}.
\end{proof}

\subsection{} We conclude this section with an elementary algorithm. 

\begin{algorithm}[Find Critical $l$]
\label{critical-l-algorithm}Given matrices $\left( A_{ij}\right)
_{i=1,\cdots ,I;j=1,\cdots J}$ and $\left( B_{ik}\right) _{i=1,\cdots
I;k=1,\cdots ,K}$ with polynomial entries $A_{ij},B_{ik}\in \mathbb{R}\left[
x_{1},\cdots ,x_{n}\right] $; and given a nonzero polynomial $\Delta \left(
x_{1},\cdots ,x_{n}\right) $, we produce an integer $%
l_{0}\geq 0$ with the following property:

Let $P_{1},\cdots ,P_{K}\in \mathbb{R}[x_{1},\cdots ,x_{n}]$. Suppose there
exist $Q_{1},\cdots ,Q_{J}\in \mathbb{R}[x_{1},\cdots ,x_{n}]$ and an
integer $l\geq 0$ such that 
\begin{eqnarray}
&&\left( \Delta \left( x_{1},\cdots ,x_{n}\right) \right)
^{l}\sum_{k=1}^{K}B_{ik}\left( x_{1},\cdots ,x_{n}\right) P_{k}\left(
x_{1},\cdots ,x_{n}\right)  \notag \\
&=&\sum_{j=1}^{J}A_{ij}\left( x_{1},\cdots ,x_{n}\right) Q_{j}\left(
x_{1},\cdots ,x_{n}\right) \text{, for each }i=1,\cdots ,I\text{.}
\label{prelim-sharp-l}
\end{eqnarray}%
Then there exist $Q_{1}^{0},\cdots ,Q_{J}^{0}\in \mathbb{R}\left[
x_{1},\cdots ,x_{n}\right] $ such that 
\begin{eqnarray*}
&&\left( \Delta \left( x_{1},\cdots ,x_{n}\right) \right)
^{l_{0}}\sum_{k=1}^{K}B_{ik}\left( x_{1},\cdots ,x_{n}\right) P_{k}\left(
x_{1},\cdots ,x_{n}\right) \\
&=&\sum_{j=1}^{J}A_{ij}\left( x_{1},\cdots ,x_{n}\right) Q_{j}^{0}\left(
x_{1},\cdots ,x_{n}\right) \text{, for each }i=1,\cdots ,I\text{.}
\end{eqnarray*}
\end{algorithm}

\begin{proof}[\textbf{Explanation}]
We write $\vec{P}$ to denote a vector $(P_{1},\cdots ,P_{K})$ of
polynomials, and we write $\mathcal{M}_{l}$ to denote the $\mathbb{R}%
[x_{1},\cdots ,x_{n}]$-module of all $\vec{P}$ for which %
\eqref{prelim-sharp-l} admits a polynomial solution $Q_{1},\cdots ,Q_{J}$.

Given any $l \geq 0$, we can compute generators $\vec{P}^{g,l}$ for $%
\mathcal{M}_l$ ($g=1,\cdots,G_l$).

We have $\mathcal{M}_{0}\subseteq \mathcal{M}_{1}\subseteq \cdots $. Since $%
\mathbb{R}\left[ x_{1},\cdots ,x_{n}\right] $ is Noetherian, it follows that 
\begin{equation*}
\mathscr{M}_{l_{0}}=\mathscr{M}_{l_{0}+1}=\cdots \text{ for some }l_{0}\text{%
.}
\end{equation*}%
We can test a given $l$ to decide whether $\mathscr{M}_{l}=\mathscr{M}%
_{l+1}, $ by checking whether the generators for $\mathscr{M}_{l+1}$ belong
to the module $\mathscr{M}_{l}$, for which we have computed generators.

Therefore, we may successively test $l=0, l=1, l=2$, etc., to determine
whether $\mathcal{M}_l = \mathcal{M}_{l+1}$. If so, we stop.

We know that the above algorithm will eventually terminate, thus producing
an integer $l_{0}\geq 0$ for which%
\begin{equation}
\mathscr{M}_{l_{0}}=\mathscr{M}_{l_{0}+1}.  \label{prelim-*1}
\end{equation}

For any $l\geq l_{0}$, we have%
\begin{equation}
\vec{P}\in \mathscr{M}_{l}\text{ if and only if }\left( \Delta \left(
x_{1},\cdots ,x_{n}\right) \right) ^{l-l_{0}}\vec{P}\in \mathscr{M}_{l_{0}},
\label{prelim-*2}
\end{equation}%
and 
\begin{equation}
\vec{P}\in \mathscr{M}_{l+1}\text{ if and only if }\left( \Delta \left(
x_{1},\cdots ,x_{n}\right) \right) ^{l-l_{0}}\vec{P}\in \mathscr{M}%
_{l_{0}+1}.  \label{prelim-*3}
\end{equation}

From (\ref{prelim-*1}), (\ref{prelim-*2}), (\ref{prelim-*3}), we conclude
that $\mathscr{M}_{l}=\mathscr{M}_{l+1}$ for $l\geq l_{0}$. Thus, we have
computed $l_{0}$ for which 
\begin{equation*}
\mathscr{M}_{l_{0}}=\mathscr{M}_{l_{0}+1}=\mathscr{M}_{l_{0}+2}=\cdots .
\end{equation*}

This completes the explanation of Algorithm \ref{critical-l-algorithm}.
\end{proof}

\section{Solutions of Differential Equations I}

In this section, we suppose we are given a semialgebraic, connected,
real-analytic submanifold $V\subset \mathbb{R}^{n}\times \mathbb{R}^{m}$. We
assume that 
\begin{equation*}
V\subset \left\{ \left( x_{1},\cdots ,x_{n},y_{1},\cdots ,y_{m}\right) \in 
\mathbb{R}^{n}\times \mathbb{R}^{m}:\tilde{P}_{\mu}\left( x_{1},\cdots
,x_{n},y_{\mu }\right) =0\text{ for }\mu =1,\cdots ,m\right\}
\end{equation*}%
where each $\tilde{P}_{\mu}$ is a nonzero polynomial.

We assume that $x_{1},\cdots ,x_{n}$ serve as local coordinates on some
non-empty relatively open subset of $V$.

We fix an integer $J\geq 1$, and we write $\vec{P}$ to denote a vector 
\begin{equation*}
\left( P_{1},\cdots ,P_{J}\right)
\end{equation*}%
of polynomials $P_{j}\in \mathbb{R}\left[ x_{1},\cdots ,x_{n},y_{1},\cdots
,y_{m}\right] $.

We suppose we are given a linear partial differential operator $L$ of order $%
M$ with polynomial coefficients in $\mathbb{R}\left[ x_{1},\cdots
,x_{n},y_{1},\cdots ,y_{m}\right] $.

We suppose that $L$ acts on vectors $\vec{P}$ and produces scalars, i.e., $L%
\vec{P}$ consists of a single polynomial.

We assume that $L$ involves only $y_{j}$-derivatives; it involves no $%
x_{i}$-derivatives.

In this section, we present the following

\begin{algorithm}
\label{main-algorithm-of-section-2}Given $V,\tilde{P}_{\mu}$, $L$ as above,
we produce generators for the $\mathbb{R}\left[ x_{1},\cdots
,x_{n},y_{1},\cdots ,y_{m}\right] $-module $\mathscr{M}$ consisting of all $%
\vec{P}$ such that 
\begin{equation}
L\left( Q\cdot \vec{P}\right) =0  \label{sol-dfq-1}
\end{equation}%
on $V$ for all $Q\in \mathbb{R}\left[ x_{1},\cdots ,x_{n},y_{1},\cdots ,y_{m}%
\right] $.
\end{algorithm}

\begin{proof}[\textbf{Explanation}]
We compute generators $S_{1},\cdots ,S_{\nu _{\max }}$ for the ideal of
polynomials 
\begin{equation*}
P\in \mathbb{R}\left[ x_{1},\cdots ,x_{n},y_{1},\cdots ,y_{m}\right]
\end{equation*}
that vanish on $V$. (See Algorithm \ref{Algorithm-generators-for-ideal-vanishing-on-semiA}.) Write 
\begin{equation*}
\tilde{P}_{\mu}(x_{1},\cdots ,x_{n},y_{\mu })=a_{\mu }(x_{1},\cdots
,x_{n})y_{\mu }^{D_{\mu }}+\sum_{j<D_{\mu }}b_{\mu j}(x_{1},\cdots
,x_{n})y_{\mu }^{j},
\end{equation*}
with $a_{\mu }$ nonzero.

(Note that $\tilde{P}_{\mu}$ cannot be independent of $y_{\mu }$, since $%
\tilde{P}_{\mu}=0$ on $V$, $\tilde{P}_{\mu}$ is not identically zero, and $%
\left( x_{1},\cdots ,x_{n}\right) $ serve as local coordinates on a
neighborhood in $V$.)

Set $\Delta \left( x_{1},\cdots ,x_{n}\right) =\prod_{\mu =1}^{m}a_{\mu
}\left( x_{1},\cdots ,x_{n}\right) $. Thus $\Delta $ isn't identically zero.

Now suppose $\vec{P}$ satisfies (\ref{sol-dfq-1}). Applying Lemma \ref%
{prelim-lemma2}, we can write 
\begin{equation}
\left( \Delta \left( x_{1},\cdots ,x_{n}\right) \right) ^{l}\vec{P}%
=\sum_{\mu =1}^{m}\left[ \tilde{P}_{\mu}\left( x_{1},\cdots ,x_{n},y_{\mu
}\right) \right] ^{M+1}\vec{H}_{\mu }\left( x_{1},\cdots ,x_{n},y_{1},\cdots
,y_{m}\right) +\vec{P}^{\#},  \label{sol-dfq-2}
\end{equation}%
where $\deg _{y}\vec{P}^{\#}\leq D_{1}^{*}$. Here, $D_{1}^{*}$ may be
computed from $D_{1},\cdots ,D_{m},M$.

Since $\vec{P}\in \mathscr{M}$, we have $\left( \Delta \left( x_{1},\cdots
,x_{n}\right) \right) ^{l}\vec{P}\in \mathscr{M}$.

Also, since $L$ is $M^{th}$-order and $\left[ \tilde{P}_{\mu}\left(
x_{1},\cdots ,x_{n},y_{\mu }\right) \right] ^{M+1}$ vanishes to $\left(
M+1\right) ^{rst}$-order at $V$, we have 
\begin{equation*}
\left[ \tilde{P}_{\mu}\left( x_{1},\cdots ,x_{n},y_{\mu }\right) \right]%
^{(M+1)} \cdot \vec{H}_{\mu }\left( x_{1},\cdots ,x_{n},y_{1},\cdots
,y_{m}\right) \in \mathscr{M}\text{ for each }\mu \text{.}
\end{equation*}%
Hence, (\ref{sol-dfq-2}) tells us that $\vec{P}^{\#}\in \mathscr{M}$. Thus, $%
L\left( QP^{\#}\right) =0$ on $V$ for each polynomial $Q$. In particular,
for each multiindex $\gamma $ of order $\left\vert \gamma \right\vert \leq M$%
, we have 
\begin{equation*}
L\left( y^{\gamma }\vec{P}^{\#}\right) =0\text{ on }V,
\end{equation*}%
and therefore 
\begin{equation*}
L\left( y^{\gamma }\vec{P}^{\#}\right) =\sum_{\nu =1}^{\nu _{\max }}\dot{A}%
_{\nu }^{\gamma }\left( x_{1},\cdots ,x_{n},y_{1},\cdots ,y_{m}\right)
S_{\nu }\left( x_{1},\cdots ,x_{n},y_{1},\cdots ,y_{m}\right)
\end{equation*}%
for polynomials $\dot{A}_{\nu }^{\gamma }\in \mathbb{R}\left[ x_{1},\cdots
,x_{n},y_{1},\cdots ,y_{m}\right] $.

Applying Lemma \ref{prelim-lemma2} to $\dot{A}_{\nu }^{\gamma }$, we see
that we can express 
\begin{eqnarray}
&&\left( \Delta \left( x_{1},\cdots ,x_{n}\right) \right) ^{l^{\prime
}}L\left( y^{\gamma }\vec{P}^{\#}\right)  \notag \\
&=&\sum_{\nu =1}^{\nu _{\max }}\hat{A}_{\nu }^{\gamma }\left( x_{1},\cdots
,x_{n},y_{1},\cdots ,y_{m}\right) S_{\nu }\left( x_{1},\cdots
,x_{n},y_{1},\cdots ,y_{m}\right)  \label{sol-dfq-3} \\
&&+\sum_{\mu =1}^{m}\tilde{P}_{\mu}\left( x_{1},\cdots ,x_{n},y_{\mu
}\right) \tilde{H}_{\mu }^{\gamma }\left( x_{1},\cdots ,x_{n},y_{1},\cdots
,y_{m}\right) ,  \notag
\end{eqnarray}%
where the $\hat{A}_{\nu }^{\gamma }$ are polynomials, the $\tilde{H}_{\mu
}^{\gamma }$ are polynomials, and $\deg _{y}\hat{A}_{\nu }^{\gamma }\leq
D_{2}^*$. Here, $D_{2}^*$ may be computed from $D_{1}^*$ and the $\tilde{P}%
_{\mu}$.

We now conclude from (\ref{sol-dfq-3}) that $\sum_{\mu =1}^{m}\tilde{P}%
_{\mu}\left( x_{1},\cdots ,x_{n},y_{\mu }\right) \tilde{H}_{\mu }^{\gamma
}\left( x_{1},\cdots ,x_{n}\right) $ has degree at most $D_{3}^*$ in $\left(
y_{1},\cdots ,y_{m}\right) $, where $D_{3}^*$ may be computed from from $L$, $%
D_{1}^*,M,D_{2}^*$ and the $S_{\nu }$.

Therefore, by Lemma \ref{prelim-lemma4}, we have (for some $\tilde{l}\geq 0$%
):%
\begin{eqnarray}
&&\left( \Delta \left( x_{1},\cdots ,x_{n}\right) \right) ^{\tilde{l}%
}\sum_{\mu }\tilde{P}_{\mu }\left( x_{1},\cdots ,x_{n},y_{\mu }\right) 
\tilde{H}_{\mu }^{\gamma }\left( x_{1},\cdots ,x_{n},y_{1},\cdots
,y_{m}\right)  \notag \\
&=&\sum_{\mu }\tilde{P}_{\mu }\left( x_{1},\cdots ,x_{n},y_{\mu }\right)
H_{\mu }^{\gamma \#}\left( x_{1},\cdots ,x_{n},y_1, \cdots, y_{m}\right)
\label{sol-dfq-3*}
\end{eqnarray}%
with $\deg _{y}H_{\mu }^{\gamma \#}\leq D_{4}^*$; here, $D_{4}^*$ may be
computed from $D_{3}^*$ and the $\tilde{P}_{\mu }$.

Substituting (\ref{sol-dfq-3*}) into (\ref{sol-dfq-3}), we learn that 
\begin{eqnarray}
&&\left( \Delta \left( x_{1},\cdots ,x_{n}\right) \right) ^{l^{\prime \prime
}}L\left( y^{\gamma }\vec{P}^{\#}\right)  \notag \\
&=&\sum_{\nu =1}^{\nu _{\max }}\check{A}_{\nu }^{\gamma }\left( x_{1},\cdots
,x_{n},y_{1},\cdots ,y_{m}\right) S_{\nu }\left( x_{1},\cdots
,x_{n},y_{1},\cdots ,y_{m}\right)  \label{sol-dfq-4} \\
&&+\sum_{\mu }\tilde{P}_{\mu }\left( x_{1},\cdots ,x_{n},y_{\mu }\right)
H_{\mu }^{\gamma \#}\left( x_{1},\cdots ,x_{n},y_{m}\right) \text{ (each } |\gamma| \leq M)  \notag
\end{eqnarray}%
with $\deg _{y}\vec{P}^{\#}\leq D_{1}^*$, $\deg _{y}\check{A}_{\nu }^{\gamma
}\leq D_{2}^*$, and $\deg _{y}H_{\mu }^{\gamma \#}\leq D_{4}^*$.

In view of these bounds on the degrees of $\vec{P}^{\#}$, $\check{A}_{\nu
}^{\gamma }$, $H_{\mu }^{\gamma \#}$ in $\left( y_{1},\cdots ,y_{m}\right) $,
we may view (\ref{sol-dfq-4}) as a system of linear equations with polynomial
coefficients in $\mathbb{R}\left[ x_{1},\cdots ,x_{n}\right] $; here, we use
the fact that $L$ differentiates only in $\left( y_{1},\cdots ,y_{m}\right)
, $ not in $\left( x_{1},\cdots ,x_{n}\right) $.

We now perform Algorithm \ref{critical-l-algorithm}. Thus, from $L$, $%
S_{1},\cdots ,S_{\nu },P_{1},\cdots ,P_{m},D_{1}^*,D_{2}^*,D_{4}^*$, we
produce an integer $l_{0}\geq 0$ such that (\ref{sol-dfq-4}) implies that we
may express 
\begin{eqnarray}
&&\left( \Delta \left( x_{1},\cdots ,x_{n}\right) \right) ^{l_{0}}L\left(
y^{\gamma }\vec{P}^{\#}\right)  \notag \\
&=&\sum_{\nu =0}^{\nu _{\max }}A_{\nu }^{\gamma }\left( x_{1},\cdots
,x_{n},y_{1},\cdots ,y_{m}\right) S_{\nu }\left( x_{1},\cdots
,x_{n},y_{1},\cdots ,y_{m}\right)  \label{sol-dfq-5} \\
&&+\sum_{\mu }\tilde{P}_{\mu}\left( x_{1},\cdots ,x_{n},y_{\mu }\right)
H_{\mu }^{\gamma }\left( x_{1},\cdots ,x_{n},y_1, \cdots, y_{m}\right) \text{ (each } |\gamma| \leq M) \notag
\end{eqnarray}%
with $\deg _{y}A_{\nu }^{\gamma }\leq D_{2}^*$ and $\deg _{y}H_{\mu
}^{\gamma }\leq D_{4}^*$.

We can compute generators $\vec{P}_{1}^{\#},\cdots ,\vec{P}_{K}^{\#}$ for
the $\mathbb{R}\left[ x_{1},\cdots ,x_{n}\right] $-module of all $\vec{P}%
^{\#}$ with $\deg _{y}\vec{P}^{\#}\leq D_{1}^*$, for which (\ref{sol-dfq-5})
admits a polynomial solution $\left( A_{\nu }^{\gamma }\right) _{\left\vert
\gamma \right\vert \leq M,\nu =1,\cdots ,\nu _{\max }}$, $\left( H_{\mu
}^{\gamma }\right) _{\left\vert \gamma \right\vert \leq M,\mu =1,\cdots ,m}$
with $\deg _{y}A_{\nu }^{\gamma }\leq D_{2}^*$ and $\deg _{y}H_{\mu
}^{\gamma }\leq D_{4}^*$.

Thus, for some polynomials $A_{\nu ,k}^{\gamma }$, $H_{\mu ,k}^{\gamma }\in 
\mathbb{R}\left[ x_{1},\cdots ,x_{n},y_{1},\cdots ,y_{m}\right] $, we have 
\begin{equation}
\Delta ^{l_{0}}L\left( y^{\gamma }\vec{P}^{\#}_k\right) =\sum_{\nu }A_{\nu
,k}^{\gamma }S_{\nu }+\sum_{\mu }H_{\mu ,k}^{\gamma }\tilde{P}_{\mu },\text{
for }\left\vert \gamma \right\vert \leq M,k=1,\cdots ,K;  \label{sol-dfq-6}
\end{equation}%
and 
\begin{equation*}
\vec{P}^{\#}\left( x_{1},\cdots ,x_{n},y_{1},\cdots ,y_{m}\right)
=\sum_{k=1}^{K}G_{k}\left( x_{1},\cdots ,x_{n}\right) \vec{P}_{k}^{\#}\text{
for some polynomials }G_{k}\text{.}
\end{equation*}%
Substituting this last equation into (\ref{sol-dfq-2}), we find that 
\begin{eqnarray}
&&\Delta ^{l}\vec{P}\left( x_{1},\cdots ,x_{n},y_{1},\cdots ,y_{m}\right) 
\notag \\
&=&\sum_{\mu =1}^{m}\left[ \tilde{P}_{\mu }\left( x_{1},\cdots ,x_{n},y_{\mu
}\right) \right] ^{M+1}\vec{H}_{\mu }\left( x_{1},\cdots ,x_{n},y_{1},\cdots
,y_{m}\right)   \label{sol-dfq-7} \\
&&+\sum_{k=1}^{K}G_{k}\left( x_{1},\cdots ,x_{n},y_{1},\cdots ,y_{m}\right) 
\vec{P}_{k}^{\#}  \notag
\end{eqnarray}%
for some polynomials $\vec{H}_{\mu }$, $G_{k}\in \mathbb{R}\left[
x_{1},\cdots ,x_{n},y_{1},\cdots ,y_{m}\right] $. (We forget that the $G_{k}$
depend only on $x_{1},\cdots ,x_{n}$.)

This holds for some $l$. Using Algorithm \ref{critical-l-algorithm}, we can
compute from the $\tilde{P}_{\mu }$ and $\vec{P}_{k}^{\#}$ (and $M$) an integer $%
l_{1}\geq 0$ such that (\ref{sol-dfq-7}) implies the existence of $\vec{H}%
_{\mu }^{\ast }$ and $G_{k}^{\ast }\in \mathbb{R}\left[ x_{1},\cdots
,x_{n},y_{1},\cdots ,y_{m}\right] $ satisfying 
\begin{eqnarray}
&&\Delta ^{l_{1}}\vec{P}\left( x_{1},\cdots ,x_{n},y_{1},\cdots ,y_{m}\right)
\notag \\
&=&\sum_{\mu =1}^{m}\left[ \tilde{P}_{\mu }\left( x_{1},\cdots
,x_{n},y_{\mu }\right) \right] ^{M+1}\vec{H}_{\mu }^{\ast }\left( x_{1},\cdots
,x_{n},y_{1},\cdots ,y_{m}\right)   \label{sol-dfq-8} \\
&&+\sum_{k=1}^{K}G_{k}^{\ast }\left( x_{1},\cdots ,x_{n},y_{1},\cdots
,y_{m}\right) \vec{P}_{k}^{\#}\left( x_{1},\cdots ,x_{n},y_{1},\cdots
,y_{m}\right) .  \notag
\end{eqnarray}

To summarize: Suppose $\vec{P}$ belongs to the module $\mathscr{M}$, i.e.,
suppose $L\left( Q\vec{P}\right) =0$ on $V$ for all $Q\in \mathbb{R}\left[
x_{1},\cdots ,x_{n},y_{1},\cdots ,y_{m}\right] $. Then there exist $\vec{H}%
_{\mu }^{\ast }$ and $G_{k}^{\ast }\in \mathbb{R}\left[ x_{1},\cdots
,x_{n},y_{1},\cdots ,y_{m}\right] $ satisfying (\ref{sol-dfq-8}). Here, the
integer $l_{1}$ and the polynomial vectors $\vec{P}_{k}^{\#}$ ($k=1,\cdots
,K $) have been computed from $V$, $\tilde{P}_{\mu }$ ($\mu =1,\cdots ,m$),
and $M$, $L$.

Conversely, suppose (\ref{sol-dfq-8}) holds. We will prove that $\vec{P}$
belongs the module $\mathscr{M}$, i.e., $L\left( Q\vec{P}\right) =0$ on $V$
for all $Q\in \mathbb{R}\left[ x_{1},\cdots ,x_{n},y_{1},\cdots ,y_{m}\right]
$.

We know that $\left[ \tilde{P}_{\mu }\left( x_{1},\cdots
,x_{n},y_{\mu }\right) \right] ^{M+1}\vec{H}_{\mu }^{\ast }\left( x_{1},\cdots
,x_{n},y_{1},\cdots ,y_{m}\right) $ belongs to $\mathscr{M}$, since $L$
is of $M^{th}$-order and $\left[ \tilde{P}_{\mu }\left( x_{1},\cdots
,x_{n},y_{\mu }\right) \right] ^{M+1}$ vanishes to $\left( M+1\right) ^{%
\text{rst}}$ order on $V$.

We will check that 
\begin{equation}
L\left( Q\vec{P}_{k}^{\#}\right) =0\text{ on }V\text{, for }k=1,\cdots ,K,%
\text{ and each }Q\in \mathbb{R}\left[ x_{1},\cdots ,x_{n},y_{1},\cdots
,y_{m}\right] \text{.}  \label{sol-dfq-9}
\end{equation}%
Once we prove (\ref{sol-dfq-9}), we will know that $G_{k}^{\ast }\vec{P}%
_{k}^{\#}\in \mathscr{M}$. In view of (\ref{sol-dfq-8}), that will tell us
that $\left( \Delta \left( x_{1},\cdots ,x_{n}\right) \right) ^{l_{1}}\vec{P}%
\left( x_{1},\cdots ,x_{n},y_{1},\cdots ,y_{m}\right) \in \mathscr{M}$,
i.e., 
\begin{equation*}
L\left( Q\Delta ^{l_{1}}\vec{P}\right) =0\text{ on }V,\text{ for each }Q\in 
\mathbb{R}\left[ x_{1},\cdots ,x_{n},y_{1},\cdots ,y_{m}\right] \text{.}
\end{equation*}%
On the other hand, since $\Delta =\Delta \left( x_{1},\cdots ,x_{n}\right) $
and $L$ involves no $x$-derivatives, we have 
\begin{equation*}
L\left( Q\Delta ^{l_{1}}\vec{P}\right) =\Delta ^{l_{1}}L\left( Q\vec{P}%
\right) ,
\end{equation*}%
and thus 
\begin{equation*}
\Delta ^{l_{1}}L\left( Q\vec{P}\right) =0\text{ on }V\text{ for each }Q\in 
\mathbb{R}\left[ x_{1},\cdots ,x_{n},y_{1},\cdots ,y_{m}\right] \text{.}
\end{equation*}%
By Lemma \ref{prelim-lemma1}, we will know that $L\left( Q\vec{P}\right) =0$
on $V$ for each $Q\in \mathbb{R}\left[ x_{1},\cdots ,x_{n},y_{1},\cdots
,y_{m}\right] $, completing the proof that $\vec{P}\in \mathscr{M}$ as
claimed.

Thus, once we check (\ref{sol-dfq-9}), we will know that $\vec{P}\in %
\mathscr{M}$ if and only if (\ref{sol-dfq-8}) admits a polynomial solution $%
\left( H_{\mu }^{\ast },G_{k}^{\ast }\right) $.

To prove (\ref{sol-dfq-9}), we return to (\ref{sol-dfq-6}). We know that the
polynomials $S_{\nu }$, $\tilde{P}_{\mu}$ vanish on $V$. Hence, (\ref%
{sol-dfq-6}) tells us that 
\begin{equation*}
\Delta ^{l_{0}}L\left( y^{\gamma }\vec{P}_{k}^{\#}\right) =0\text{ on }V%
\text{ (any }\left\vert \gamma \right\vert \leq M\text{).}
\end{equation*}%
Again applying Lemma \ref{prelim-lemma1}, we see that 
\begin{equation}
L\left( Q\vec{P}_{k}^{\#}\right) =0\text{ on }V\text{ for any polynomial }Q%
\text{ of degree at most }M\text{.}  \label{sol-dfq-10}
\end{equation}%
(Here, we use the fact that $L$ involves no $x$-derivatives; multiplying $%
\vec{P}_{k}^{\#}$ by $x^{\beta }$ has a trivial effect.)

Now, let $Q$ be any polynomial, and let $\left( x^{o},y^{o}\right) \in V$.
We expand $Q$ about $(x^{o},y^{o})$ and thus write 
\begin{equation*}
Q=Q_{low}+\sum_{\left\vert \beta \right\vert +\left\vert \gamma \right\vert
\geq M+1}Q_{\beta \gamma }\left( x-x^{o}\right) ^{\beta }\left(
y-y^{o}\right) ^{\gamma }
\end{equation*}%
for coefficients $Q_{\beta \gamma }$ and a polynomial $Q_{low}$, with $\deg
\left( Q_{low}\right) \leq M$.

We have already seen that $L\left( Q_{low}\vec{P}_{k}^{\#}\right) =0$ on $V$
(see (\ref{sol-dfq-10})), hence $L\left( Q_{low}\vec{P}_{k}^{\#}\right) $ vanishes
at $\left( x^{o},y^{o}\right) $.

On the other hand, since $L$ is of $M^{th}$-order, we have also 
\begin{equation*}
L\left( Q_{\beta \gamma }\left( x-x^{o}\right) ^{\beta }\left(
y-y^{o}\right) ^{\gamma }\vec{P}_{k}^{\#}\right) =0\text{ at }\left(
x^{o},y^{o}\right) ,\text{ for }\left\vert \beta \right\vert +\left\vert
\gamma \right\vert \geq M+1.
\end{equation*}%
Therefore, $L\left( Q\vec{P}_{k}^{\#}\right) =0$ at $\left(
x^{o},y^{o}\right) $ for any $Q\in \mathbb{R}\left[ x_{1},\cdots
,x_{n},y_{1},\cdots ,y_{m}\right] $, completing the proof of (\ref{sol-dfq-9}%
).

We now know that $\vec{P}\in \mathscr{M}$ if and only if there exist
polynomials $H_{\mu }^{\ast }\left( x_{1},\cdots ,x_{n},y_{1},\cdots
,y_{m}\right) $ ($\mu =1,\cdots ,m$) and $G_{k}^{\ast }\left( x_{1},\cdots
,x_{n},y_{1},\cdots ,y_{m}\right) $ ($k = 1,\cdots, K$) satisfying (\ref{sol-dfq-8}).

Since $l_{1}$, $\tilde{P}_{\mu}$, and $\vec{P}_{k}^{\#}$ in (\ref{sol-dfq-8}%
) have all been computed, we may now produce generators for the $\mathbb{R}%
\left[ x_{1},\cdots ,x_{n},y_{1},\cdots ,y_{m}\right] $-module of solutions $%
\left( \vec{P},\left( \vec{H}_{\mu }^{\ast }\right) _{\mu =1,\cdots
,m}\left( G_{k}^{\ast }\right) _{k=1,\cdots ,K}\right) $ of (\ref{sol-dfq-8}%
).

The $\vec{P}$-components of these generators are generators for the module $%
\mathscr{M}$.

This completes our explanation of Algorithm \ref{main-algorithm-of-section-2}%
.
\end{proof}

\section{Solutions of Differential Equations II}

In this section, we assume that 
\begin{equation*}
V\subset \left\{ 
\begin{array}{c}
\left( x_{1},\cdots ,x_{n},y_{1},\cdots ,y_{m},z_{1},\cdots ,z_{p}\right)
\in \mathbb{R}^{n}\times \mathbb{R}^{m}\times \mathbb{R}^{p}: \\ 
\tilde{P}_{\mu }\left( x_{1},\cdots ,x_{n},y_{\mu }\right) =0\text{ }\left(
\mu =1,\cdots ,m\right) , \\ 
\hat{P}_{\lambda }\left( x_{1},\cdots ,x_{n},z_{\lambda }\right) =0\text{ }%
\left( \lambda =1,\cdots ,p\right) 
\end{array}%
\right\} 
\end{equation*}%
where $\tilde{P}_{\mu },\hat{P}_{\lambda }$ are nonzero polynomials.

We assume that $V\subset \mathbb{R}^{n}\times \mathbb{R}^{m}\times \mathbb{R}%
^{p}$ is a semialgebraic real-analytic connected manifold, and that $\left(
x_{1},\cdots ,x_{n}\right) $ serve as local coordinates on a nonempty
(relatively) open subset of $V$.

We fix an integer $J\geq 1$, and we write $\vec{P}$ to denote a vector $%
\left( P_{1},\cdots ,P_{J}\right) $ of polynomials 
\begin{equation*}
P_{j}\in \mathbb{R}\left[ x_{1},\cdots ,x_{n},y_{1},\cdots
,y_{m},z_{1},\cdots ,z_{p}\right] .
\end{equation*}

We suppose we are given an $M^{th}$-order linear partial differential
operator $L$ with polynomial coefficients in $\mathbb{R}\left[ x_{1},\cdots
,x_{n},y_{1},\cdots ,y_{m},z_{1},\cdots ,z_{p}\right] $.

We suppose $L$ maps vectors $\vec{P}$ to scalars (i.e., $L\vec{P}$ is a
single polynomial).

We assume that $L$ contains no $x$-derivatives, although it may involve both 
$y$ and $z$-derivatives. 

We write $\vec{P}\left( x,y\right) $ to denote a vector of polynomials $%
\left( P_{1},\cdots ,P_{J}\right) $, with each 
\begin{equation*}
P_{j}\in \mathbb{R}[ x_{1},\cdots ,x_{n},y_{1},\cdots ,y_{m}]
\end{equation*}
(i.e., the $P_{j}$ do not depend on $z_{1},\cdots ,z_{p}$).

In this section, we present the following

\begin{algorithm}
\label{main-algorithm-section3}Given $V,\tilde{P}_{\mu },\hat{P}_{\lambda }$%
, $L$ as above, we compute generators for the $\mathbb{R}\left[ x_{1},\cdots
,x_{n},y_{1},\cdots ,y_{m}\right] $ module consisting of all vectors $\vec{P}%
\left( x,y\right) $ such that 
\begin{equation}
L\left( Q\vec{P}\right) =0\text{ on }V\text{ for every }%
Q\in \mathbb{R}\left[ x_{1},\cdots ,x_{n},y_{1},\cdots ,y_{m},z_{1},\cdots
,z_{p}\right] \text{.}  \label{section3-1}
\end{equation}
\end{algorithm}

\begin{proof}[\textbf{Explanation}]
Applying Algorithm \ref{main-algorithm-of-section-2} (with $z_{1},\cdots
,z_{p}$ regarded as $y_{m+1},\cdots ,y_{m+p}$), we obtain generators $\vec{P}%
_{1},\cdots ,\vec{P}_{K}$ for the $\mathbb{R}\left[ x_{1},\cdots
,x_{n},y_{1},\cdots ,y_{m},z_{1},\cdots ,z_{p}\right] $-module of all $\vec{P%
}$ such that $L\left( Q\vec{P}\right) =0$ on $V$ for all $Q\in \mathbb{R}%
\left[ x_{1},\cdots ,x_{n},y_{1},\cdots ,y_{m},z_{1},\cdots ,z_{p}\right] $.

We write $D_{1}^{*},D_{2}^{*},\cdots $ to denote constants that can be computed from 
$V,\tilde{P}_{\mu },\hat{P}_{\lambda },L$.

For each $\lambda =1,\cdots ,p$, we write 
\begin{equation*}
\hat{P}_{\lambda }\left( x_{1},\cdots ,x_{n},z_{\lambda }\right) =a_{\lambda
}\left( x_{1},\cdots ,x_{n}\right) z_{\lambda }^{D_{\lambda
}}+\sum_{j=0}^{D_{\lambda }-1}b_{\lambda j}\left( x_{1},\cdots ,x_{n}\right)
\left( z_{\lambda }\right) ^{j},
\end{equation*}%
with $a_{\lambda }\left( x_{1},\cdots ,x_{n}\right) \not\equiv 0$. (Note
that $\hat{P}_{\lambda }$ cannot be independent of $z_{\lambda }$, since $%
\hat{P}_{\lambda }=0$ on $V$, yet $\hat{P}_{\lambda }\not\equiv 0$, and $\left( x_{1},\cdots ,x_{n}\right) $ serve as local coordinates on
a nonempty neighborhood in the real-analytic manifold $V$.)

Let $\Delta \left( x_{1},\cdots ,x_{n}\right) =\prod_{\lambda
=1}^{p}a_{\lambda }\left( x_{1},\cdots ,x_{n}\right) $; thus $\Delta $ is a
nonzero polynomial.

Now suppose $\vec{P}\left( x,y\right) $ satisfies (\ref{section3-1}). Then
for polynomials 
\begin{equation*}
A_{k}^{\left( 0\right) }\left( x_{1},\cdots ,x_{n},y_{1},\cdots
,y_{m},z_{1},\cdots ,z_{p}\right) 
\end{equation*}%
($k=1,\cdots ,K$), we have 
\begin{equation}
\vec{P}\left( x,y\right) =\sum_{k=1}^{K}A_{k}^{\left( 0\right) }\vec{P}_{k}.
\label{section3-2}
\end{equation}%
Note that the $A_{k}^{\left( 0\right) }$ and $\vec{P}_{k}$ may depend on $%
z_{1},\cdots ,z_{p}$, although $\vec{P}\left( x,y\right) $ does not. By
Lemma \ref{prelim-lemma2}, there exist $l\geq 0$ and polynomials $%
A_{k}^{\left( 1\right) }$, $B_{k\lambda }\in \mathbb{R}\left[ x_{1},\cdots
,x_{n},y_{1},\cdots ,y_{m},z_{1},\cdots ,z_{p}\right] $ such that 
\begin{eqnarray*}
&&\left( \Delta \left( x_{1},\cdots ,x_{n}\right) \right) ^{l}A_{k}^{\left(
0\right) }\left( x_{1},\cdots ,x_{n},y_{1},\cdots ,y_{m},z_{1},\cdots
,z_{p}\right)  \\
&=&\sum_{\lambda =1}^{p}B_{k\lambda }\left( x_{1},\cdots ,x_{n},y_{1},\cdots
,y_{m},z_{1},\cdots ,z_{p}\right) \left[ \hat{P}_{\lambda }\left(
x_{1},\cdots ,x_{n},z_{\lambda }\right) \right] ^{M+1} \\
&&+A_{k}^{\left( 1\right) }\left( x_{1},\cdots ,x_{n},y_{1},\cdots
,y_{m},z_{1},\cdots ,z_{p}\right) 
\end{eqnarray*}%
for each $k=1,\cdots ,K$; and $\deg _{z}A_{k}^{\left( 1\right) }\leq D_{1}^*$.
Substituting this equation into (\ref{section3-2}), we see that 
\begin{equation}
\left( \Delta \left( x_{1},\cdots ,x_{n}\right) \right) ^{l}\vec{P}%
\left( x,y\right) =\sum_{k=1}^{K}A_{k}^{\left( 1\right) }\vec{P}%
_{k}+\sum_{\lambda =1}^{p}\left[ \hat{P}%
_{\lambda }\left( x_{1},\cdots ,x_{n},z_{\lambda }\right) \right] ^{M+1}\cdot \vec{H}_{\lambda }^{\left( 1\right) }
\label{section3-3}
\end{equation}%
for some vector of polynomials $\vec{H}_{\lambda }^{\left( 1\right) }$ in $%
\mathbb{R}\left[ x_{1},\cdots ,x_{n},y_{1},\cdots ,y_{m},z_{1},\cdots ,z_{p}%
\right] $.

Thus, (\ref{section3-3}) holds, with $\deg _{z}A_{k}^{\left( 1\right) }\leq
D_{1}^*$ for all $k$.

All terms in (\ref{section3-3}) except for the sum on $\lambda $ have degree 
$\leq D_{2}^*$ in $\left( z_{1},\cdots ,z_{p}\right) $.

Therefore, (\ref{section3-3}) tells us that 
\begin{equation*}
\deg _{\left( z_{1},\cdots ,z_{p}\right) }\sum_{\lambda =1}^{p}\vec{H}%
_{\lambda }^{\left( 1\right) }\left[ \hat{P}_{\lambda }\left( x_{1},\cdots
,x_{n},z_{\lambda }\right) \right] ^{M+1}\leq D_{2}^*.
\end{equation*}%
Hence, by Lemma \ref{prelim-lemma4}, there exist an integer $l^{\prime }\geq
0$ and vectors of polynomials $\vec{H}_{\lambda }^{\left( 2\right) }$in 
\begin{equation*}
\mathbb{R[}x_{1},\cdots ,x_{n},y_{1},\cdots ,y_{m},z_{1},\cdots ,z_{p}]
\end{equation*}
satisfying 
\begin{equation*}
\left( \Delta \left( x_{1},\cdots ,x_{n}\right) \right) ^{l^{\prime
}}\sum_{\lambda =1}^{p}\left[ \hat{P}%
_{\lambda }\left( x_{1},\cdots ,x_{n},z_{\lambda }\right) \right]
^{M+1}\cdot \vec{H}_{\lambda }^{\left( 1\right) }=\sum_{\lambda =1}^{p}\left[ \hat{%
P}_{\lambda }\left( x_{1},\cdots ,x_{n},z_{\lambda }\right) \right] ^{M+1} \cdot \vec{H}_{\lambda }^{\left( 2\right) }
\end{equation*}%
and $\deg _{\left( z_{1},\cdots ,z_{p}\right) }\vec{H}_{\lambda }^{\left(
2\right) }\leq D_{3}^*$.

Substituting the above equation into (\ref{section3-3}), we find that for
some $l^{\prime \prime }\geq 0$: 
\begin{equation}
\left( \Delta \left( x_{1},\cdots ,x_{n}\right) \right) ^{l^{\prime \prime }}%
\vec{P}\left( x,y\right) =\sum_{k=1}^{K}A_{k}^{\left( 2\right) }\vec{P}%
_{k}+\sum_{\lambda =1}^{p}\left[ \hat{P}_{\lambda }\left( x_{1},\cdots
,x_{n},z_{\lambda }\right) \right] ^{M+1}\vec{H}_{\lambda }^{\left( 2\right)
}\text{.}  \label{section3-4}
\end{equation}%
Here, the $A_{k}^{\left( 2\right) }$, and the components of the vectors $%
\vec{H}_{\lambda }^{\left( 2\right) }$, are polynomials in $x_{1},\cdots
,x_{n}$, $y_{1},\cdots ,y_{m}$, $z_{1},\cdots ,z_{p}$; and these polynomials
have degree at most $D_{4}^*$ in $\left( z_{1},\cdots ,z_{p}\right) $ (thanks
to the above bound for the degrees of $A_{k}^{\left( 1\right) }$, $\vec{H}%
_{\lambda }^{\left( 2\right) }$ in $\left( z_{1},\cdots ,z_{p}\right) $).

We may regard (\ref{section3-4}) as a system of linear equations with
coefficients in 
\begin{equation*}
\mathbb{R}\left[ x_{1},\cdots ,x_{n},y_{1},\cdots ,y_{m}\right] .
\end{equation*}
Applying Algorithm \ref{critical-l-algorithm}, we now produce an integer $%
l_{0}\geq 0$ such that (\ref{section3-4}) implies that 
\begin{equation}
\left( \Delta \left( x_{1},\cdots ,x_{n}\right)\right) ^{l_{0}} \vec{P}\left(
x,y\right) =\sum_{k=1}^{K}A_{k}\vec{P}_{k}+\sum_{\lambda =1}^{p}%
\left[ \hat{P}_{\lambda }\left( x_{1},\cdots ,x_{n},z_{\lambda }\right) %
\right] ^{M+1}\vec{H}_{\lambda }  \label{section3-5}
\end{equation}%
for some $A_{k}$, $\vec{H}_{\lambda }$; where $A_{k}$ and the components of $%
\vec{H}_{\lambda }$ are polynomials in $x_{1},\cdots ,x_{n}$, $y_{1},\cdots
,y_{m}$, $z_{1},\cdots ,z_{p}$ having degree at most $D_{4}^*$ in $\left(
z_{1},\cdots ,z_{p}\right) $.

To summarize: We have computed $l_{0},\Delta ,\vec{P}_{k},\hat{P}_{\lambda }$.
Moreover, whenever $\vec{P}(x,y)$ satisfies $L\left( Q\vec{P}\right) =0$ on $V$ for
all $Q\in \mathbb{R}\left[ x_{1},\cdots ,x_{n},y_{1},\cdots
,y_{m},z_{1},\cdots ,z_{p}\right] $, then (\ref{section3-5}) holds with $A_k$
and $\vec{H}_\lambda$ having degree at most $D_4^*$ in $z$.

Conversely, suppose (\ref{section3-5}) holds. Let $Q\in \mathbb{R}\left[
x_{1},\cdots ,x_{n},y_{1},\cdots ,y_{m},z_{1},\cdots ,z_{p}\right] $ be
given. By the defining property of the $\vec{P}_{k}$, we have 
\begin{equation*}
L\left( QA_{k}\vec{P}_{k}\right) =0\text{ on }V\text{ (each }k\text{).}
\end{equation*}%
Moreover, 
\begin{equation}
L\left( Q\left[ \hat{P}_{\lambda }\left( x_{1},\cdots ,x_{n},z_{\lambda
}\right) \right] ^{M+1}\vec{H}_{\lambda }\right) =0\text{ on }V\text{ (each }%
\lambda \text{),}  \label{section3-6}
\end{equation}%
since $L$ is of order $M$ and $\left[ \hat{P}_{\lambda }\left(
x_{1},\cdots ,x_{n},z_{\lambda }\right) \right] ^{M+1}$ vanishes to $\left(
M+1\right) ^{rst}$-order on $V$.

Therefore, (\ref{section3-5}) yields%
\begin{equation*}
L\left( Q\left( \Delta \left( x_{1},\cdots ,x_{n}\right) \right) ^{l_{0}}%
\vec{P} \right) =0\text{ on }V\text{.}
\end{equation*}%
However, since $L$ involves no $x$-derivatives, we have 
\begin{equation*}
L\left( Q\left( \Delta \left( x_{1},\cdots ,x_{n}\right) \right) ^{l_{0}}%
\vec{P} \right) =\left( \Delta \left( x_{1},\cdots
,x_{n}\right) \right) ^{l_{0}}L\left( Q\vec{P} \right) ,
\end{equation*}%
and therefore 
\begin{equation*}
\left( \Delta \left( x_{1},\cdots ,x_{n}\right) \right) ^{l_{0}}L\left( Q%
\vec{P} \right) =0\text{ on }V.
\end{equation*}%
Applying Lemma \ref{prelim-lemma1}, we conclude that $L\left( Q\vec{P} \right) =0$ on $V$. This holds for arbitrary $Q\in \mathbb{R}%
\left[ x_{1},\cdots ,x_{n},y_{1},\cdots ,y_{m},z_{1},\cdots ,z_{p}\right] $.

Thus, $\vec{P}\left( x,y\right) $ satisfies (\ref{section3-1}) if and only
if it satisfies (\ref{section3-5}) with $A_k$ and $\vec{H}_\lambda$ having
degree at most $D_4^*$ in $z$.

However, using the standard algorithms in Section \ref%
{section-known-algorithms}, we can compute generators for the $\mathbb{R[}%
x_{1},\cdots ,x_{n},y_{1},\cdots ,y_{m}]$-module of all solutions of (\ref%
{section3-5}) with $A_k$ and $\vec{H}_\lambda$ having degree at most $D_4^*$
in $z$.

This concludes our explanation of Algorithm \ref{main-algorithm-section3}.
\end{proof}

\section{Solutions of Differential Equations III}

In this section, we suppose we are given a semialgebraic connected real analytic submanifold $V$ of $\mathbb{R}^{n}\times \mathbb{R}^{m}$, with 
\begin{equation*}
V\subset \left\{ \left( x_{1},\cdots ,x_{n},y_{1},\cdots ,y_{m}\right) \in 
\mathbb{R}^{n}\times \mathbb{R}^{m}:\tilde{P}_{\mu }\left( x_{1},\cdots
,x_{n},y_{\mu }\right) =0\text{ }\left( \mu =1,\cdots ,m\right) \right\} ,
\end{equation*}%
where each $\tilde{P}_{\mu }$ is a nonzero polynomial.

We assume that $\left( x_{1},\cdots ,x_{n}\right) $ serve as local
coordinates on some nonempty relatively open subset $U$ of $V$.

We fix $J\geq 1$ and write $\vec{P}$ to denote a vector $\left( P_{1},\cdots
,P_{J}\right) $ of polynomials 
\begin{equation*}
P_{j}\in \mathbb{R}\left[ x_{1},\cdots ,x_{n},y_{1},\cdots ,y_{m}\right] .
\end{equation*}

We suppose we are given a linear differential operator $L$ of order $\leq M$,
with polynomial coefficients in $\mathbb{R}\left[ x_{1},\cdots
,x_{n},y_{1},\cdots ,y_{m}\right] $.

We assume that $L$ acts on vectors $\vec{P}$ and produces scalars (i.e., $L%
\vec{P}$ consists of a single polynomial).

We do not assume that $L$ involves only $y$-derivatives. Our $L$ may involve
differentiations in any (or all) of the variables $x_{1},\cdots
,x_{n},y_{1},\cdots ,y_{m}$.

The result of this section is the following.

\begin{algorithm}
\label{main-algorithm-section4}Given $V,\tilde{P}_{\mu },L$, we compute
finitely many linear differential operators $L_{1},\cdots ,L_{N}$, with the
following properties

\begin{itemize}
\item Each $L_\nu$ maps vectors $\vec{P}$ to single polynomials.

\item The coefficients of each $L_\nu$ are polynomials in $\mathbb{R}%
[x_1,\cdots,x_n,y_1,\cdots,y_m]$.

\item Each $L_{\nu }$ involves only $y$-differentiations (i.e., no $x$%
-differentiations appear in $L_{\nu }$).

\item Let $\vec{P}$ be given. Then 
\begin{equation*}
L\left( Q\vec{P}\right) =0\text{ on }V\text{ for any }Q\in \mathbb{R}\left[
x_{1},\cdots ,x_{n},y_{1},\cdots ,y_{m}\right]
\end{equation*}%
if and only if 
\begin{equation*}
L_{\nu }\left( Q\vec{P}\right) =0\text{ on }V\text{ for any }Q\in \mathbb{R}%
\left[ x_{1},\cdots ,x_{n},y_{1},\cdots ,y_{m}\right] ,\nu =1,\cdots ,N\text{%
.}
\end{equation*}
\end{itemize}
\end{algorithm}

\begin{proof}[\textbf{Explanation}]
As noted (twice) before, $\tilde{P}_{\mu }\left( x_{1},\cdots ,x_{n},y_{\mu
}\right) $ cannot be independent of $y_{\mu }$, since $\tilde{P}_{\mu }=0$
on $V$ but $\tilde{P}_{\mu }$ is not identically zero, and $\left(
x_{1},\cdots ,x_{n}\right) $ serve as local coordinates in a neighborhood in 
$V$.

We begin by possibly modifying the $\tilde{P}_{\mu }$ as in our explanation of Algorithm \ref{algorithm-complexify}, so that (for each $%
\mu $) the polynomial%
\begin{equation*}
\partial _{y_{\mu }}\tilde{P}_{\mu }\left( x_{1},\cdots ,x_{n},y_{\mu
}\right)
\end{equation*}%
does not vanish everywhere on $V$.

Now let 
\begin{equation*}
\Delta \left( x_{1},\cdots ,x_{n},y_{1},\cdots ,y_{m}\right) =\prod_{\mu
=1}^{m}\partial _{y_{\mu }}\tilde{P}_{\mu }\left( x_{1},\cdots ,x_{n},y_{\mu
}\right) .
\end{equation*}%
Thus, $\Delta $ is a polynomial and $\Delta $ is nonzero somewhere on $V$.

We introduce some useful vector fields on $\mathbb{R}^n \times \mathbb{R}^m$%
: For $j=1,\cdots ,n$, define 
\begin{equation}
X_{j}=\Delta \left( x_{1},\cdots ,x_{n},y_{1},\cdots ,y_{m}\right) \left[
\partial _{x_{j}}-\sum_{\mu =1}^{m}\frac{\partial _{x_{j}}\tilde{P}_{\mu
}\left( x_{1},\cdots ,x_{n},y_{\mu }\right) }{\partial _{y_{\mu }}\tilde{P}%
_{\mu }\left( x_{1},\cdots ,x_{n},y_{\mu }\right) }\partial _{y_{\mu }}%
\right] .  \label{sec4-1}
\end{equation}%
Note that $X_{j}$ has polynomial coefficients. More precisely, 
\begin{eqnarray}
X_{j} &=&\Delta \left( x_{1},\cdots ,x_{n},y_{1},\cdots ,y_{m}\right)
\partial _{x_{j}}+\sum_{\mu =1}^{m}b_{j\mu }\left( x_{1},\cdots
,x_{n},y_{1},\cdots ,y_{m}\right) \partial _{y_{\mu }}  \notag \\
&\equiv &\Delta \left( x,y\right) \partial _{x_{j}}+Y_{j},  \label{sec4-2}
\end{eqnarray}%
where the $b_{j\mu }$ are polynomials and $Y_{j}$ involves only $y$%
-derivatives.

Note also that $X_{j}\tilde{P}_{\mu ^{\prime }}\left( x_{1},\cdots
,x_{n},y_{\mu ^{\prime }}\right) =0$ for each $\mu ^{\prime }$ and each $j$,
since 
\begin{equation*}
\partial _{y_{\mu }}\tilde{P}_{\mu ^{\prime }}\left( x_{1},\cdots
,x_{n},y_{\mu ^{\prime }}\right) =0\text{ if }\mu \not=\mu ^{\prime }.
\end{equation*}

It follows that the $X_{j}$ are tangent to $V$ at all points of $U\setminus
\left\{ \Delta =0\right\} $. (See the beginning of the section for the
definition of $U$.) Since $V$ is a connected real-analytic manifold, and
since $\Delta |_{V}$ is real-analytic and not identically zero, we conclude
that $V \setminus \left\{\Delta=0 \right\}$ is dense in $V$ and $U\setminus \left\{ \Delta =0\right\} $ is dense in $U$. Therefore,
since $V$ and $X_{j}$ are real-analytic, it follows that $X_{j}$ is tangent
to $V$ everywhere on $U$, hence everywhere on $V$. 

We record the observation:%
\begin{equation}
\text{Each }X_{j}\text{ is tangent to }V.  \label{sec4-3}
\end{equation}%

We will be commuting multiplication by $\Delta ^{s}$ past products $%
X_{j_{1}}\cdots X_{j_{t}}$. An easy induction on $t$ gives 
\begin{equation}
\left[ \Delta ^{s},X_{j_{1}}\cdots X_{j_{t}}\right] =\Delta
^{s-t}L_{s,j_{1}\cdots j_{t}}\text{ for }s>t\text{ (}s,t\text{ positive
integers)}  \label{xxxx1}
\end{equation}%
where $L_{s,j_{1}\cdots j_{t}}$ is a differential operator with polynomial
coefficients and order less than $t$. To see this, we write 
\begin{eqnarray*}
\Delta ^{s}X_{j_{1}}\cdots X_{j_{t+1}} &=&X_{j_{1}}\Delta
^{s}X_{j_{2}}\cdots X_{j_{t+1}}-s\Delta ^{s-1}\left( X_{j_{1}}\Delta \right)
X_{j_{2}}\cdots X_{j_{t+1}} \\
&=&X_{j_{1}}\left\{ X_{j_{2}}\cdots X_{j_{t+1}}\Delta ^{s}+\Delta
^{s-t}L_{s,j_{2}\cdots j_{t+1}}\right\} -s\Delta ^{s-1}\left(
X_{j_{1}}\Delta \right) X_{j_{2}}\cdots X_{j_{t+1}} \\
&=&X_{j_{1}}\cdots X_{j_{t+1}}\Delta ^{s}+\left( s-t\right) \Delta
^{s-\left( t+1\right) }\left( X_{j_{1}}\Delta \right) L_{s,j_{2}\cdots
j_{t+1}} \\
&&+\Delta ^{s-t}X_{j_{1}}L_{s,j_{2}\cdots j_{t+1}}-s\Delta ^{s-1}\left(
X_{j_{1}}\Delta \right) X_{j_{2}}\cdots X_{j_{t+1}}
\end{eqnarray*}%
so that 
\begin{eqnarray*}
L_{s,j_{1}\cdots j_{t+1}} &=&\left( s-t\right) \left( X_{j_{1}}\Delta
\right) L_{s,j_{2}\cdots j_{t+1}}+\Delta X_{j_{1}}L_{s,j_{2}\cdots j_{t+1}}
\\
&&-s\Delta ^{t}\left( X_{j_{1}}\Delta \right) X_{j_{2}}\cdots X_{j_{t+1}}%
\text{.}
\end{eqnarray*}

Since $L$ is a linear differential operator of order at most $M$ with
polynomial coefficients, we may write 
\begin{equation}
L\left( \vec{P}\right) =\sum_{\left\vert \alpha \right\vert +\left\vert
\beta \right\vert =M}\vec{a}_{\alpha \beta }\left( x,y\right) \cdot \partial
_{x}^{\alpha }\partial _{y}^{\beta }\vec{P}+L_{low}\vec{P},  \label{sec4-4}
\end{equation}%
where we write $L_{low}$ to denote a differential operator with polynomial
coefficients of order $<M$. The symbol $L_{low}$ will be used to denote
several such operators, i.e., $L_{low}$ may denote different operators in
different occurrences.

In (\ref{sec4-4}), $\vec{a}_{\alpha \beta }$ is a vector, and each component
of $\vec{a}_{\alpha \beta }$ is a polynomial in $x_{1},\cdots
,x_{n},y_{1},\cdots ,y_{m}$. Also, in (\ref{sec4-4}), we use
\textquotedblleft $\cdot $\textquotedblright\ to denote the dot product of
vectors.

From (\ref{sec4-4}), we obtain 
\begin{eqnarray*}
&&\left[ \Delta \left( x,y\right) \right] ^{M}L\left( \vec{P}\right) \\
&=&\sum_{\left\vert \alpha \right\vert +\left\vert \beta \right\vert
=M,\alpha =\left( \alpha _{1},\cdots ,\alpha _{n}\right) }\vec{a}_{\alpha
\beta }\left( x,y\right) \left[ \Delta \left( x,y\right) \right]
^{\left\vert \beta \right\vert }\cdot \left( \Delta \left( x,y\right) \partial
_{x_{1}}\right) ^{\alpha _{1}}\cdots \left( \Delta \left( x,y\right)
\partial _{x_{n}}\right) ^{\alpha _{n}}\partial _{y}^{\beta }\vec{P} \\
&&+L_{low}\vec{P} \\
&=&\sum_{\left\vert \left( \alpha _{1},\cdots ,\alpha _{n}\right)
\right\vert +\left\vert \beta \right\vert =M}\vec{a}_{\alpha \beta }\left(
x,y\right) \left[ \Delta \left( x,y\right) \right] ^{\left\vert \beta
\right\vert }\cdot \left( X_{1}-Y_{1}\right) ^{\alpha _{1}}\cdots \left(
X_{n}-Y_{n}\right) ^{\alpha _{n}}\partial _{y}^{\beta }\vec{P}+L_{low}\vec{P}%
.
\end{eqnarray*}%
We express each $\left( X_{1}-Y_{1}\right) ^{\alpha _{1}}\cdots \left(
X_{n}-Y_{n}\right) ^{\alpha _{n}}$ as a sum of terms, each of which is a
product of factors $X_{j}$ or $Y_{j}$.

Modulo operators $L_{low}$, we may move all $X_{j}$ in each term to the left
of all the $Y_{j}$ in that term. Therefore, 
\begin{eqnarray}
&&\left[ \Delta \left( x,y\right) \right] ^{M}L\left( \vec{P}\right)  \notag
\\
&=&\sum_{\left\vert \gamma \right\vert +a+b=M}\sum_{\substack{ j_{1}\leq
j_{2}\leq \cdots \leq j_{a}  \\ k_{1}\leq k_{2}\leq \cdots \leq k_{b}}}\vec{%
\phi}_{k_{1}\cdots k_{b},\gamma }^{j_{1}\cdots j_{a}}\cdot
X_{j_{1}}X_{j_{2}}\cdots X_{j_{a}}Y_{k_{1}}Y_{k_{2}}\cdots Y_{k_{b}}\partial
_{y}^{\gamma }\vec{P}+L_{low}\left( \vec{P}\right) ,  \label{sec4-5}
\end{eqnarray}%
with the $\vec{\phi}$'s (vector valued) polynomials in $x_{1},\cdots ,x_{n}$,
$y_{1},\cdots ,y_{m}$.

Let $D$ be a large enough integer constant, to be picked below.

Then (\ref{sec4-5}) yields 
\begin{eqnarray}
&&\left[ \Delta \left( x,y\right) \right] ^{D}L\left( \vec{P}\right)  \notag
\\
&=&\sum_{\left\vert \gamma \right\vert +a+b=M}\sum_{\substack{ j_{1}\leq
j_{2}\leq \cdots \leq j_{a}  \\ k_{1}\leq k_{2}\leq \cdots \leq k_{b}}}%
X_{j_{1}}\cdots X_{j_{a}}\left[ \Delta \left( x,y\right) \right] ^{D-M}\vec{%
\phi}_{k_{1}\cdots k_{b},\gamma }^{j_{1}\cdots j_{a}}\cdot Y_{k_{1}}\cdots
Y_{k_{b}}\partial _{y}^{\gamma }\vec{P}  \label{sec4-6} \\
&&+\left[ \Delta \left( x,y\right) \right] ^{D-2M}L_{low}\left( \vec{P}%
\right),  \notag
\end{eqnarray}%
thanks to \eqref{xxxx1}.

Now $L_{low}$ in (\ref{sec4-6}) satisfies all the assumptions made on $L$,
except that $L_{low}$ has order $\leq M-1$, whereas $L$ has order $\leq M$.
Therefore, just as we proved (\ref{sec4-6}) for $L$, we can now prove 
\begin{eqnarray}
&&\left[ \Delta \left( x,y\right) \right] ^{D-2M}L_{low}\left( \vec{P}\right)
\notag \\
&=&\sum_{\left\vert \gamma \right\vert +a+b=M-1}\sum_{\substack{ j_{1}\leq
\cdots \leq j_{a}  \\ k_{1}\leq \cdots \leq k_{b}}}X_{j_{1}}\cdots X_{j_{a}}%
\left[ \Delta \left( x,y\right) \right] ^{\left( D-2M\right) -\left(
M-1\right) }\vec{\psi}_{k_{1}\cdots k_{b},\gamma }^{j_{1}\cdots j_{a}}\cdot
Y_{k_{1}}\cdots Y_{k_{b}}\partial _{y}^{\gamma }\vec{P}  \label{sec4-7} \\
&&+\left[ \Delta \left( x,y\right) \right] ^{\left( D-2M\right) -2\left(
M-1\right) }L_{lower}\left( \vec{P}\right) .  \notag
\end{eqnarray}%
Here, the $\vec{\psi}$'s are (vector-valued) polynomials in $x_{1},\cdots
,x_{n},y_{1},\cdots ,y_{m}$, and $L_{lower}$ is a linear differential
operator of order $\leq M-2$ with polynomial coefficients in $\mathbb{R}%
\left[ x_{1},\cdots ,x_{n},y_{1},\cdots ,y_{m}\right] $. Also, we point out
that $L_{low}$ in (\ref{sec4-7}) is the same as $L_{low}$ in (\ref{sec4-6}).

Again, there is a formula for $\left[ \Delta \left( x,y\right) \right]
^{\left( D-2M\right) -2\left( M-1\right) }$ $L_{lower}\left( \vec{P}\right)
, $ analogous to (\ref{sec4-6}) and (\ref{sec4-7}).

Continuing in this way, we eventually reach $L_{lowest}$, a differential
operator of order $0$.

We simply pick $D$ large enough so that all the powers of $\Delta \left(
x,y\right) $ appearing in the above formulas (\ref{sec4-6}), (\ref{sec4-7}) $%
\cdots $ are non-negative.

From the formulas (\ref{sec4-6}), (\ref{sec4-7}) $\cdots $ , we conclude
that 
\begin{eqnarray}
&&\left[ \Delta \left( x,y\right) \right] ^{D}L\left( \vec{P}\right)  \notag
\\
&=&\sum_{\left\vert \gamma \right\vert +a+b\leq M}\sum_{\substack{ j_{1}\leq
\cdots \leq j_{a}  \\ k_{1}\leq \cdots \leq k_{b}}}X_{j_{1}}\cdots X_{j_{a}}%
\vec{\theta}_{k_{1}\cdots k_{b},\gamma }^{j_{1}\cdots j_{a}}\cdot
Y_{k_{1}}\cdots Y_{k_{b}}\partial ^{\gamma}_y\vec{P}  \label{sec4-8}
\end{eqnarray}%
where the coefficients $\vec{\theta}_{k_{1}\cdots k_{b},\gamma
}^{j_{1}\cdots j_{a}}$ are vectors whose components are polynomials in $%
x_{1},\cdots ,x_{n}$, $y_{1},\cdots ,y_{m}$.

Moreover, everything we did in deriving (\ref{sec4-8}) was effective; hence,
we may compute the integer $D$ and the polynomials $\vec{\theta}%
_{k_{1}\cdots k_{b},\gamma }^{j_{1}\cdots j_{a}}$ in (\ref{sec4-8}).

It is convenient to express (\ref{sec4-8}) in a different notation. First of
all, for fixed $j_{1}\leq \cdots \leq j_{a}$, we define 
\begin{eqnarray*}
&&L_{j_{1}\cdots j_{a}}\left( \vec{P}\right) \\
&=&\sum_{\left\vert \gamma \right\vert +b\leq M-a}\sum_{k_{1}\leq \cdots
\leq k_{b}}\vec{\theta}_{k_{1}\cdots k_{b},\gamma }^{j_{1}\cdots j_{a}}\cdot
Y_{k_{1}}\cdots Y_{k_{b}}\partial ^{\gamma }_y\vec{P}.
\end{eqnarray*}%
Thus, $L_{j_{1}\cdots j_{a}}$ is a linear differential operator involving no 
$x$-derivatives and having coefficients in $\mathbb{R}\left[ x_{1},\cdots
,x_{n},y_{1},\cdots ,y_{m}\right] $. Thus, (\ref{sec4-8}) becomes%
\begin{eqnarray*}
&&\left[ \Delta \left( x,y\right) \right] ^{D}L\left( \vec{P}\right) \\
&=&\sum_{a \leq M } \sum_{j_1\leq j_2 \leq \cdots \leq j_a} X_{j_{1}}\cdots
X_{j_{a}}L_{j_{1}\cdots j_{a}}\left( \vec{P}\right) .
\end{eqnarray*}%
For a multiindex $\alpha =\left( \alpha _{1},\cdots ,\alpha _{n}\right) $,
we define $X^{\alpha }=X_{1}^{\alpha _{1}}\cdots X_{n}^{\alpha _{n}}$. (The $X_j$ needn't commute.)

Then the above formula for $[\Delta (x,y)]^{D}L(\vec{P})$ takes the form%
\begin{equation}
\left[ \Delta \left( x,y\right) \right] ^{D}L\left( \vec{P}\right)
=\sum_{\left\vert \alpha \right\vert \leq M}X^{\alpha }L_{\alpha }\left( 
\vec{P}\right) ,  \label{sec4-9}
\end{equation}%
where each $L_{\alpha }$ is a linear differential operator involving no $x$%
-derivatives and having coefficients in $\mathbb{R}\left[ x_{1},\cdots
,x_{n},y_{1},\cdots ,y_{m}\right] $.

Moreover, 
\begin{equation}
\text{we can compute }D\text{ and all the }L_{\alpha }\text{ from the data
provided at the beginning of the section.}  \label{sec4-10}
\end{equation}

We now prove the following

{\textbf{Claim. }}{\textit{Let $\vec{P}=(P_{1},\cdots ,P_{J})$ with each $%
P_{j} \in \mathbb{R}[x_1, \cdots, x_n,y_1,\cdots, y_m]$. Then%
\begin{equation}
L\left( Q\vec{P}\right) =0\text{ on }V\text{ for all }Q\in \mathbb{R}\left[
x_{1},\cdots ,x_{n},y_{1},\cdots ,y_{m}\right]  \label{sec4-11}
\end{equation}%
if and only if 
\begin{equation}
L_{\alpha }\left( Q\vec{P}\right) =0\text{ on }V\text{ for all }Q\in \mathbb{%
R}\left[ x_{1},\cdots ,x_{n},y_{1},\cdots ,y_{m}\right] \text{, }\left\vert
\alpha \right\vert \leq M.  \label{sec4-12}
\end{equation}%
}}

Once we have established the claim, we can simply take $L_{1},\cdots ,L_{N}$
in Algorithm \ref{main-algorithm-section4} to be a list of all the $%
L_{\alpha }$. Thus, to complete our explanation of that algorithm, it is
enough to prove the above claim.

First suppose $\vec{P}$ satisfies $L_{\alpha }\left( Q\vec{P}\right) =0$ on $%
V$ for all $Q$ and all $\left\vert \alpha \right\vert \leq M$. Recalling
from (\ref{sec4-3}) that each $X_{j}$ is tangent to $V$, and recalling the
definition $X^{\alpha }=X_{1}^{\alpha _{1}}\cdots X_{n}^{\alpha _{n}}$ for $%
\alpha =\left( \alpha _{1},\cdots ,\alpha _{n}\right) $, we conclude that $%
X^{\alpha }L_{\alpha }\left( Q\vec{P}\right) =0$ on $V$ for all $Q$, and for
all $\left\vert \alpha \right\vert \leq M$. Summing over $\alpha $ and
recalling (\ref{sec4-9}), we conclude that 
\begin{equation*}
\left[ \Delta \left( x,y\right) \right] ^{D}L\left( Q\vec{P}\right) =0\text{
on }V\text{ for all }Q\text{.}
\end{equation*}%
We have seen that $V\setminus \left\{ \left( x,y\right) \in V:\Delta
\left( x,y\right) =0\right\} $ is dense in $V$. Therefore, for each $Q$, $L\left( Q\vec{P}\right) =0$
on a dense subset of $V$. Since $Q$, the components of $\vec{P}$, and the
coefficients of $L$ are all polynomials in $x_{1},\cdots ,x_{n},y_{1},\cdots
,y_{m}$, we conclude that $L\left( Q\vec{P}\right) =0$ on $V$. Thus, we have
proven that (\ref{sec4-12}) implies (\ref{sec4-11}).

Next, we show that (\ref{sec4-11}) implies (\ref{sec4-12}).

For all $M_{\ast }\leq M$, we prove that 
\begin{equation}
\sum_{\left\vert \alpha \right\vert \leq M_{\ast }}X^{\alpha }\left(
L_{\alpha }\left( Q\vec{P}\right) \right) =0\text{ on }V\text{ for all }Q
\label{sec4-13}
\end{equation}%
implies%
\begin{equation}
L_{\alpha }\left( Q\vec{P}\right) =0\text{ on }V\text{ for all }Q\text{ and
all }\left\vert \alpha \right\vert \leq M_{\ast }.  \label{sec4-14}
\end{equation}%
Once we know that (\ref{sec4-13}) implies \eqref{sec4-14}, we just take $%
M_{\ast}=M$; then \eqref{sec4-13} follows from \eqref{sec4-11} , \eqref{sec4-14} becomes \eqref{sec4-12}, and the proof of
Claim will be complete. Thus, it remains to prove that \eqref{sec4-13}
implies \eqref{sec4-14}. We proceed by induction on $M_{\ast} $.

For $M_{\ast}=0$, \eqref{sec4-13} and \eqref{sec4-14} are obviously
equivalent, since there is only one multiindex $\alpha$ with $|\alpha|\leq 0$%
.

For the induction step, fix $M_{\ast} \geq 1$, and suppose that \eqref{sec4-13}$\implies$\eqref{sec4-14} holds with $M_{\ast}-1$ in place of 
$M_{\ast}$. We will prove \eqref{sec4-13}$\implies$\eqref{sec4-14} for the
given $M_{\ast}$.

In fact, suppose $\vec{P}$ satisfies \eqref{sec4-13}. Fix $(x^{0},y^{0})\in
V $, and $|\alpha |=M_{\ast }$. Recalling the form \eqref{sec4-2} of $X_{j}$%
, we see that%
\begin{equation*}
X^{\beta }=\left( \Delta \left( x,y\right) \right) ^{\left\vert \beta
\right\vert }\partial _{x}^{\beta }+L_{\beta }^{err},
\end{equation*}%
where $L_{\beta }^{err}$ differentiates at most $\left\vert \beta
\right\vert -1$ times in $x$, and perhaps many times in $y$. Therefore, for
any smooth function $A\left( x,y\right) $, we have 
\begin{equation}
X^{\beta }\left[ \left( x-x^{0}\right)^\alpha A\left( x,y\right) \right]
|_{\left( x^{0},y^{0}\right) }=0\text{ for }\left\vert \beta \right\vert
<\left\vert \alpha \right\vert =M_{\ast },  \label{sec4-15}
\end{equation}%
and 
\begin{equation}
X^{\beta }\left[ \left( x-x^{0}\right)^\alpha A\left( x,y\right) \right]
|_{\left( x^{0},y^{0}\right) }=\alpha !\delta _{\beta \alpha }\left( \Delta
\left( x^{0},y^{0}\right) \right) ^{M_{\ast }}\cdot A\left(
x^{0},y^{0}\right) \text{ for }\left\vert \beta \right\vert =\left\vert
\alpha \right\vert =M_{\ast }.  \label{sec4-16}
\end{equation}%
For any $Q\in \mathbb{R}\left[ x_{1},\cdots ,x_{n},y_{1},\cdots ,y_{m}\right]
$, we have from (\ref{sec4-13})%
\begin{equation*}
\sum_{\left\vert \beta \right\vert \leq M_{\ast }}X^{\beta }\left( L_{\beta
}\left( \left( x-x^{0}\right)^\alpha Q\cdot \vec{P}\right) \right) =0\text{
at }\left( x^{0},y^{0}\right) .
\end{equation*}%
However, $L_{\beta }$ involves no $x$-derivatives, and therefore 
\begin{equation*}
L_{\beta }\left( \left( x-x^{0}\right) ^{\alpha }Q\cdot \vec{P}\right)
=\left( x-x^{0}\right) ^{\alpha }L_{\beta }\left( Q\cdot \vec{P}\right) .
\end{equation*}%
Hence, 
\begin{equation*}
\sum_{\left\vert \beta \right\vert \leq M_{\ast }}X^{\beta }\left[ \left(
x-x^{0}\right) ^{\alpha }L_{\beta }\left( Q\cdot \vec{P}\right) \right] =0%
\text{ at }\left( x^{0},y^{0}\right) .
\end{equation*}%
Taking $A\left( x,y\right) =L_{\beta }\left( Q\cdot \vec{P}\right) $ in (\ref%
{sec4-15}), (\ref{sec4-16}), we now conclude that 
\begin{equation*}
\left( \Delta \left( x^{0},y^{0}\right) \right) ^{M_{\ast }}L_{\alpha
}\left( Q\cdot \vec{P}\right) =0\text{ at }\left( x^{0},y^{0}\right) .
\end{equation*}%
This holds for any $Q\in \mathbb{R}\left[ x_{1},\cdots ,x_{n},y_{1},\cdots
,y_{m}\right] $, any $\left( x^{0},y^{0}\right) \in V$ and any $\left\vert
\alpha \right\vert =M_{\ast }$. Thus, $\left( \Delta \left( x,y\right)
\right) ^{M_{\ast }}L_{\alpha }\left( Q\cdot \vec{P}\right) =0$ on $V$, for $%
Q\in \mathbb{R}\left[ x_{1},\cdots ,x_{n},y_{1},\cdots ,y_{m}\right] $ and $%
\left\vert \alpha \right\vert =M_{\ast }$.

Recalling that $V\setminus \{(x,y)\in V:\Delta (x,y)=0\}$ is
dense in $V$, we now conclude that 
\begin{equation}
L_{\alpha }(Q\cdot \vec{P})=0\text{ on }V  \label{sec4-17}
\end{equation}
for all $Q\in \mathbb{R}\left[ x_{1},\cdots ,x_{n},y_{1},\cdots ,y_{m}\right]
$ and $\left\vert \alpha \right\vert =M_{\ast }$. Recalling that the $X_{j}$
are tangent to $V$, we conclude that 
\begin{equation*}
X^{\alpha }\left( L_{\alpha }\left( Q\cdot \vec{P}\right) \right) =0\text{
on }V\text{ for all }Q,\text{ and for }\left\vert \alpha \right\vert
=M_{\ast }\text{.}
\end{equation*}%
Together with (\ref{sec4-13}), this tells us that 
\begin{equation*}
\sum_{\left\vert \alpha \right\vert \leq M_{\ast }-1}X^{\alpha }\left(
L_{\alpha }\left( Q\cdot \vec{P}\right) \right) =0\text{ on }V
\end{equation*}%
for all $Q\in \mathbb{R}\left[ x_{1},\cdots ,x_{n},y_{1},\cdots ,y_{m}\right]
$. Hence, by induction hypothesis (\eqref{sec4-13}$\implies $\eqref{sec4-14}
with $M_{\ast }$ replaced by $M_{\ast }-1$), we have 
\begin{equation*}
L_{\alpha }\left( Q\cdot \vec{P}\right) =0\text{ on }V\text{ for all }Q\text{
and }\left\vert \alpha \right\vert \leq M_{\ast }-1\text{.}
\end{equation*}%
Together with (\ref{sec4-17}) this completes the proof that \eqref{sec4-13}$%
\implies $\eqref{sec4-14} for the given $M_{\ast }$. This also completes our
induction on $M_{\ast }$, the proof of Claim, and the explanation of
Algorithm \ref{main-algorithm-section4}.
\end{proof}

\section{Solutions of Differential Equations IV\label{section-5-solutionofDE}%
}

In this section, we suppose we are given a semialgebraic, connected,
real-analytic submanifold%
\begin{equation*}
V\subset \mathbb{R}^{n}\times \mathbb{R}^{m}\times \mathbb{R}^{p}\text{.}
\end{equation*}%
We write $\left( x,y,z\right) $ to denote a point of $\mathbb{R}^{n}\times 
\mathbb{R}^{m}\times \mathbb{R}^{p}$, with $x=\left( x_{1},\cdots
,x_{n}\right) $, $y=\left( y_{1},\cdots ,y_{m}\right) $, and $z=\left(
z_{1},\cdots ,z_{p}\right) $.

We assume that $x_{1},\cdots ,x_{n}$ serve as local coordinates in some
nonempty relatively open subset of $V$.

We suppose we are given nonzero polynomials $\tilde{P}_{\mu }\left( x,y_{\mu
}\right) $ ($\mu =1,\cdots ,m$) and $\hat{P}_{\lambda }\left( x,z_{\lambda
}\right) $ ($\lambda =1,\cdots p$) that vanish on $V$.

We write $\mathbb{R}\left[ x,y,z\right] $ to denote the ring $\mathbb{R}%
\left[ x_{1},\cdots ,x_{n},y_{1},\cdots ,y_{m},z_{1},\cdots ,z_{p}\right] $;
similarly for $\mathbb{R}\left[ x,y\right] $. We write $\vec{P}$ to denote a
vector $\left( P_{1},\cdots ,P_{J}\right) $ with components in $\mathbb{R}%
\left[ x,y,z\right] $, and we write $\vec{P}\left( x,y\right) $ to denote a
vector $\left( P_{1},\cdots ,P_{J}\right) $ with components in $\mathbb{R}%
\left[ x,y\right] $.

We suppose we are given a linear differential operator $L$ with polynomial
coefficients in $\mathbb{R}\left[ x,y,z\right] $. The operator $L$ acts on
vectors $\vec{P}$ and produces scalars (i.e., $L\vec{P}$ has just one
component).

We present the following

\begin{algorithm}
\label{find-generators-formodules-pde-v1}Given $V$, $\tilde{P}_{\mu }$, $%
\hat{P}_{\lambda }$, $L$, we find generators for the $\mathbb{R}\left[ x,y%
\right] $-module of all polynomial vectors $\vec{P}\left( x,y\right) $ such
that $L\left( Q\cdot \vec{P}\right) =0$ on $V$ for all $Q\in \mathbb{R}\left[
x,y,z\right] $.
\end{algorithm}

\begin{proof}[\textbf{Explanation}]
Regarding $z_{1},\cdots ,z_{p}$ as $y_{m+1},\cdots ,y_{m+p}$, we apply
Algorithm \ref{main-algorithm-section4}, to produce linear differential
operators $L_{1},\cdots ,L_{N}$ with the following properties.

\begin{itemize}
\item Each $L_{\nu }$ maps vectors $\vec{P}$ to scalars (i.e., $L_{\nu }%
\vec{P}$ has only one component).

\item Each $L_{\nu }$ has polynomial coefficients in $\mathbb{R}[x,y,z]$.

\item Each $L_{\nu }$ involves only $y$ and $z$-derivatives (no $x$%
-derivatives).

\item For any $\vec{P}$, we have $L(Q\vec{P})=0$ on $V$ for all $Q\in 
\mathbb{R}[x,y,z]$ if and only if $L_{\nu }(Q\vec{P})=0$ on $V$ for all $%
Q\in \mathbb{R}[x,y,z]$ and all $\nu =1,\cdots ,N$.
\end{itemize}

For each $\nu =1,\cdots ,N$, we apply Algorithm \ref{main-algorithm-section3}
to produce generators for the $\mathbb{R}[x,y]$-module%
\begin{equation*}
\mathscr{M}_{\nu }=\left\{ \vec{P}\left( x,y\right) :L_{\nu }\left( Q\vec{P}%
\right) =0\text{ on }V\text{ for all }Q\in \mathbb{R}\left[ x,y,z\right]
\right\} \text{.}
\end{equation*}%
Thanks to the last bullet point above, the $\mathbb{R}\left[ x,y\right] $-module 
\begin{equation*}
\mathscr{M}=\left\{ \vec{P}\left( x,y\right) :L\left( Q\vec{P}\right) =0%
\text{ on }V\text{ for all }Q\in \mathbb{R}\left[ x,y,z\right] \right\}
\end{equation*}%
is equal to $\mathscr{M}_{1}\cap \cdots \cap \mathscr{M}_{N}$. Since we have
computed generators for each $\mathscr{M}_{\nu }$, we can compute generators
for their intersection. This completes our explanation of the algorithm.
\end{proof}

\section{Solutions of Differential Equations V\label{section7}}

In this section, we work in $\mathbb{R}^{n}$ and we write $x=\left(
x_{1},\cdots ,x_{n}\right) $ to denote a point of $\mathbb{R}^n$. We fix $J\geq 1$, and we write $\vec{F}=\left(
F_{1},\cdots ,F_{J}\right) $ to denote a vector of smooth functions on $%
\mathbb{R}^{n}$.

We suppose we are given a linear differential operator $L$, acting on
vectors $\vec{F}$, and producing scalar-valued functions $L\vec{F}$.

We assume that $L$ has semialgebraic coefficients.

\begin{algorithm}[Main Algorithm for Differential Equations]
\label{main-algorithm-for-differential-equations}Given $L$ as above, we
compute generators for the $\mathbb{R}\left[ x_{1},\cdots ,x_{n}\right] $%
-module $\mathcal{M}(L)$ consisting of all polynomial vectors $\vec{P}=\left(
P_{1},\cdots ,P_{J}\right) $ such that $L\left( Q\vec{P}\right) =0$ on $%
\mathbb{R}^{n}$ for all $Q\in \mathbb{R}\left[ x_{1},\cdots ,x_{n}\right] $.
\end{algorithm}

\begin{proof}[\textbf{Explanation}]
Let $A_{1}\left( x\right) $, $A_{2}\left( x\right) ,\cdots ,A_{K}\left(
x\right) $ be a list of all the coefficients of $L$. The $A_{k}$ are
semialgebraic functions. 

Applying Algorithm \ref{algorithm-6.5} to the semialgebraic set $E = \mathbb{%
R}^n$ and the list of functions $A_1,\cdots, A_K$, we obtain the following:

\begin{itemize}
\item A partition of $\mathbb{R}^n$ into finitely many semialgebraic sets $%
E_\nu \quad (\nu =1 ,\cdots, N)$.

\item For each $\nu$, an invertible linear map $T_\nu: \mathbb{R}^n
\rightarrow \mathbb{R}^n$.

\item We guarantee that, for each $\nu $, $T_{\nu }E_{\nu }$ has the form

\begin{itemize}
\item $T_{\nu }E_{\nu }=\left\{ \left( x^{\prime },x^{\prime \prime }\right)
\in \mathbb{R}^{n_{\nu }}\times \mathbb{R}^{n-n_{\nu }}:x^{\prime }\in
U_{\nu },x^{\prime \prime }=G_{\nu }\left( x^{\prime }\right) \right\} $,
where

\item $U_{\nu }\subset \mathbb{R}^{n_{\nu }}$ is an open, connected
semialgebraic set and

\item $G_{\nu }:U_{\nu }\rightarrow \mathbb{R}^{n-n_{\nu }}$ is
real-analytic and semialgebraic.

\item Moreover, for each $\lambda =1,\cdots ,K$, we have $A_{\lambda }\circ
T_{\nu }^{-1}\left( x^{\prime },G_{\nu }\left( x^{\prime }\right) \right)
=H_{\lambda \nu }\left( x^{\prime }\right) $ for all $x^{\prime }\in U_{\nu
} $, where $H_{\lambda \nu }:U_{\nu }\rightarrow \mathbb{R}$ is a
real-analytic semialgebraic function.
\end{itemize}
\end{itemize}

Algorithm \ref{algorithm-6.5} computes the above objects $E_{\nu },T_{\nu
},U_{\nu },G_{\nu },H_{\lambda \nu }$ as well as nonzero polynomials

\begin{itemize}
\item $P_{\mu \nu }\left( x_{1}^{\prime },\cdots ,x_{n_{\nu }}^{\prime
},y_{\mu }\right) \quad \left( \mu =1,\cdots ,n-n_{\nu },\nu =1,\cdots
,N\right) $ and

\item $\hat{P}_{\lambda \nu }\left( x_{1}^{\prime },\cdots ,x_{n_{\nu
}}^{\prime },z_{\lambda }\right) \quad \left( \lambda =1,\cdots ,K,\nu
=1,\cdots ,N\right) $, such that%
\begin{equation*}
P_{\mu \nu }\left( x_{1}^{\prime },\cdots ,x_{n_{\nu }}^{\prime },y_{\mu
}\right) =0\text{ whenever }x^{\prime }=\left( x_{1}^{\prime },\cdots
,x_{n_{\nu }}^{\prime }\right) \in U_{\nu }\text{ and }\left( y_{1},\cdots
,y_{n-n_{\nu }}\right) =G_{\nu }\left( x^{\prime }\right);
\end{equation*}%
and 
\begin{equation*}
\hat{P}_{\lambda \nu }\left( x_{1}^{\prime },\cdots ,x_{n_{\nu }}^{\prime
},H_{\lambda \nu }\left( x_{1}^{\prime },\cdots ,x_{n_{\nu }}^{\prime
}\right) \right) =0\text{ for all }\left( x_{1}^{\prime },\cdots ,x_{n_{\nu
}}^{\prime }\right) \in U_{\nu }\text{.}
\end{equation*}
\end{itemize}

We define $L_{\nu }\vec{F}=\left[ L\left( \vec{F}\circ T_{\nu }\right) %
\right] \circ T_{\nu }^{-1}$. Thus, $L\vec{F}=0$ if and only if $L_{\nu
}\left( F\circ T_{\nu }^{-1}\right) =0$ on $T_{\nu }\left( E_{\nu }\right) $
for each $\nu =1,\cdots ,N$.

Therefore our module $\mathcal{M}(L)$ is equal to the intersection
over all $\nu =1,\cdots N$ of the $\mathbb{R}\left[ x_{1},\cdots ,x_{n}%
\right] $-modules 
\begin{equation*}
\mathcal{\mathcal{M}}_{\nu }=\left\{ \vec{P}\circ T_{\nu }:\vec{P}\in 
\mathcal{M}_{\nu }^{\#}\right\} \text{, where }
\end{equation*}

\begin{itemize}
\item[\refstepcounter{equation}\text{(\theequation)}\label{7.3.1}] $\mathcal{%
M}_{\nu }^{\#}$ is the $\mathbb{R}\left[ x_{1},\cdots ,x_{n}\right] $-module
consisting of all polynomial vectors $\vec{P}=\left( P_{1},\cdots
,P_{J}\right) $ such that 
\begin{equation*}
L_{\nu }\left( Q\cdot \vec{P}\right) =0
\end{equation*}%
on $\Gamma_{\nu }=\left\{ \left( x^{\prime },x^{\prime \prime }\right) \in 
\mathbb{R}^{n_{\nu }}\times \mathbb{R}^{n-n_{\nu }}:x^{\prime }\in U_{\nu
},x^{\prime \prime }\in G_{\nu }\left( x^{\prime }\right) \right\} $ for all 
$Q \in \mathbb{R}[x^{\prime },x^{\prime \prime }]$.
\end{itemize}

We will compute generators for each $\mathcal{M}_\nu^{\#}$. From those, we
can compute generators for $\mathcal{M}_\nu$, then for $\mathcal{M}$.

The coefficients of $L$ are the functions $A_1,\cdots, A_K$; and $L_\nu$
arose from $L$ by a known linear coordinate change $T_\nu$.

\begin{itemize}
\item[\refstepcounter{equation}\text{(\theequation)}\label{7.3.2}] %
Therefore, the coefficients of $L_{\nu }$ are equal on $\Gamma_{\nu }$ to known linear
combinations of the functions $H_{\lambda \nu }(x^{\prime })\quad (\lambda
=1,\cdots ,K)$.
\end{itemize}

We now introduce the semialgebraic set

\begin{itemize}
\item[\refstepcounter{equation}\text{(\theequation)}\label{7.3.3}] $V_{\nu
}=\left\{ \left( x^{\prime },x^{\prime \prime },z_{1},\cdots ,z_{K}\right)
:x^{\prime }\in U_{\nu },x^{\prime \prime }=G_{\nu }\left( x^{\prime
}\right) ,z_{\lambda }=H_{\lambda \nu }\left( x^{\prime }\right) \text{ for }%
\lambda =1,\cdots ,K\right\} $.
\end{itemize}

Since $U_{\nu }$ is connected, open and semialgebraic in $\mathbb{R}^{n_{\nu
}}$, and since $G_{\nu },H_{\lambda \nu }$ are real-analytic and
semialgebraic on $U_{\nu }$, we see that $V_{\nu }$ is a semialgebraic,
connected, real-analytic manifold in $\mathbb{R}^{n_{\nu }}\times \mathbb{R}%
^{n-n_{\nu }}\times \mathbb{R}^{K}$ , on which the rectangular coordinates $%
x_1^{\prime}, \cdots, x_{n_\nu}^{\prime}$ of $x^{\prime }$ serve as global
real-analytic coordinates.\newline
Moreover, our nonzero polynomials $P_{\mu \nu },\hat{P}_{\lambda \nu }$
satisfy 
\begin{equation*}
P_{\mu \nu }\left( x_{1}^{\prime },\cdots ,x_{n_{\nu }}^{\prime },y_{\mu
}\right) =0
\end{equation*}
for $\left( x_{1}^{\prime },\cdots ,x_{n_{\nu }}^{\prime },y_{1},\cdots
,y_{n-n_{\nu }},z_{1},\cdots ,z_{k}\right) \in V_{\nu }$ \ and 
\begin{equation*}
\hat{P}_{\lambda \nu }\left( x_{1}^{\prime },\cdots ,x_{n_{\nu }}^{\prime
},z_{\lambda }\right) =0
\end{equation*}%
for $\left( x_{1}^{\prime },\cdots ,x_{n_{\nu }}^{\prime },y_{1},\cdots
,y_{n-n_{\nu }},z_{1},\cdots ,z_{k}\right) \in V_{\nu }$.\newline
Thus, the $V_{\nu },P_{\mu \nu },\hat{P}_{\lambda \nu }$ (for fixed $\nu $)
are as in the setup for Section \ref{section-5-solutionofDE}.

Moreover, we may lift $L_{\nu }$ in a natural way to a differential operator 
$L_{\nu }^{\#}$, acting on vectors of smooth functions on $\mathbb{R}%
^{n_{\nu }}\times \mathbb{R}^{n-n_{\nu }}\times \mathbb{R}^{K}$. To define $%
L_{\nu }^{\#}$, and to see its relationship to $L_{\nu }$, we recall the
remark (\ref{7.3.2}). Thus, for a finite list of coefficients $\left( \Omega
_{\alpha \beta \nu}^{\lambda j}\right) $, we have 
\begin{equation*}
L_{\nu }\left( F_{1},\cdots ,F_{J}\right) \left( x^{\prime },x^{\prime
\prime }\right) =\sum_{\lambda ,j,\alpha ,\beta }\Omega _{\alpha \beta \nu
}^{\lambda j}\cdot H_{\lambda \nu }\left( x^{\prime }\right) \cdot \partial
_{x^{\prime }}^{\alpha }\partial _{x^{\prime \prime }}^{\beta }F_{j}\left(
x^{\prime },x^{\prime \prime }\right)
\end{equation*}%
for $\left( x^{\prime },x^{\prime \prime }\right) \in \Gamma_{\nu }$. The $\Omega _{\alpha \beta
\nu}^{\lambda j}$ are real numbers, which we can compute. 

Recalling \eqref{7.3.3}, we are led to define%
\begin{equation}
L_{\nu }^{\#}\left( F_{1},\cdots ,F_{J}\right) \left( x^{\prime },x^{\prime
\prime },z_{1},\cdots ,z_{K}\right) =\sum_{\lambda ,j,\alpha ,\beta }\Omega
_{\alpha \beta 
\nu}^{\lambda j}\cdot z_\lambda \cdot \partial _{x^{\prime
}}^{\alpha }\partial _{x^{\prime \prime }}^{\beta }F_j\left( x^{\prime
},x^{\prime \prime },z_{1},\cdots ,z_{K}\right)  \label{7.3.star}
\end{equation}%
for vectors $(F_1,\cdots,F_J)$ of smooth functions defined on $\mathbb{R}^{n_{\nu }}\times 
\mathbb{R}^{n-n_{\nu }}\times \mathbb{R}^{K}$. \newline
The relationship between $L_{\nu }$ and $L_{\nu }^{\#}$ is as follows:
Suppose $\vec{F}=\left( F_{1},\cdots ,F_{J}\right) $, where the
functions $F_{j}:\mathbb{R}^{n_{\nu }}\times \mathbb{R}^{n-n_{\nu }}\times 
\mathbb{R}^{K}\rightarrow \mathbb{R}$ do not depend on the last $K$
coordinates. Then we may regard $\vec{F}$ either as a vector-valued function
on $\mathbb{R}^{n_{\nu }}\times \mathbb{R}^{n-n_{\nu }}\times \mathbb{R}^{K}$
or on $\mathbb{R}^{n_{\nu }}\times \mathbb{R}^{n-n_{\nu }}$. We have $L_{\nu
}\vec{F}=0$ on $\Gamma_{\nu }$ if and only if $L_{\nu }^{\#}\vec{F}=0$ on $V_{\nu
}$.

Accordingly, our definition \eqref{7.3.1} of the module $\mathcal{M}_{\nu
}^{\#}$ is equivalent to the following

\begin{itemize}
\item[\refstepcounter{equation}\text{(\theequation)}\label{7.3.4}] $\mathcal{%
M}_{\nu }^{\#}$ is the $\mathbb{R}[x_{1}^{\prime },\cdots ,x_{n_{\nu
}}^{\prime },y_{1},\cdots ,y_{n-n_{\nu }}]$-module of all polynomial vectors 
$$\vec{P}=(P_{1},\cdots ,P_{J}),$$ with each $P_{j}\in \mathbb{R}%
[x_{1}^{\prime },\cdots ,x_{n_{\nu }}^{\prime },y_{1},\cdots ,y_{n-n_{\nu
}}] $, such that 
\begin{equation*}
L_{\nu }^{\#}\left( Q\cdot \vec{P}\right) =0\text{ on }V_{\nu }
\end{equation*}%
for all $Q\in \mathbb{R}[x_{1}^{\prime },\cdots ,x_{n_{\nu }}^{\prime
},y_{1},\cdots ,y_{n-n_{\nu }}]$.
\end{itemize}

Note that $L_{\nu }^{\#}$ is a linear differential operator with polynomial
coefficients in 
\begin{equation*}
\mathbb{R}[x_{1}^{\prime },\cdots ,x_{n_{\nu }}^{\prime },y_{1},\cdots
,y_{n-n_{\nu }},z_{1},\cdots ,z_{K}].
\end{equation*}

Thus, $L_\nu^{\#}$ is as in the setup for Section \ref%
{section-5-solutionofDE}.

We are now in position to apply Algorithm \ref%
{find-generators-formodules-pde-v1}. Thus, we compute generators of the 
\begin{equation*}
\mathbb{R}[x_{1}^{\prime },\cdots ,x_{n_{\nu }}^{\prime },y_{1},\cdots
,y_{n-n_{\nu }}]\text{-module}
\end{equation*}

\begin{itemize}
\item[\refstepcounter{equation}\text{(\theequation)}\label{7.3.5}] $\mathcal{%
M}_{\nu}^{\# \#}$ consisting of all polynomial vectors $\vec{P}%
=(P_1,\cdots,P_J)$ with each 
\begin{equation*}
P_j \in \mathbb{R}[x_{1}^{\prime },\cdots ,x_{n_{\nu }}^{\prime
},y_{1},\cdots ,y_{n-n_{\nu }}],
\end{equation*}
such that $L_\nu^{\#}(Q\vec{P})=0$ on $V_\nu$ for all $Q \in \mathbb{R}%
[x_{1}^{\prime },\cdots ,x_{n_{\nu }}^{\prime },y_{1},\cdots ,y_{n-n_{\nu
}},z_1,\cdots,z_K] $.
\end{itemize}

Note that the $Q$ allowed in \eqref{7.3.5} are more general than the $Q$
allowed in \eqref{7.3.4}.

We now show that

\begin{itemize}
\item[\refstepcounter{equation}\text{(\theequation)}\label{7.3.6}] $\mathcal{%
M}_{\nu}^{\#}=\mathcal{M}_{\nu}^{\#\#}$.
\end{itemize}

Once we know this, we will have computed generators for each $\mathcal{M}_{\nu }^{\#}$%
; as we explained earlier, this allows us to compute generators for the
module $\mathcal{M}(L)$ introduced in the statement of Algorithm \ref%
{main-algorithm-for-differential-equations}. Thus, our task is reduced to
proving \eqref{7.3.6}.

Trivially, $\mathcal{M}_{\nu }^{\#\#}\subset \mathcal{M}_{\nu }^{\#}$. Our
task is to show that $\mathcal{M}_{\nu }^{\#}\subset \mathcal{M}_{\nu
}^{\#\#}$.

Let $\vec{P}=\left( P_{1},\cdots ,P_{J}\right) $ belong to $\mathcal{M}_{\nu
}^{\#}$, and let $\left( x^{o},y^{o},z^{o}\right) \in V$, where $%
x^{o}=\left( x_{1}^{o},\cdots ,x_{n_{\nu }}^{o}\right) $, $y^{o}=\left(
y_{1}^{o},\cdots ,y_{n-n_{\nu }}^{o}\right) $, $z^{o}=\left(
z_{1}^{o},\cdots ,z_{K}^{o}\right) $.

Thus, each $P_{j}$ is a polynomial in the variables $x_{1},\cdots ,x_{n_{\nu
}}$, $y_{1},\cdots ,y_{n-n_{\nu }}$ (not involving $z_{1},\cdots ,z_{K}$),
and we have 
\begin{equation*}
L_{\nu }^{\#}\left( Q^{o}\cdot \vec{P}\right) |_{\left(
x^{o},y^{o},z^{o}\right) }=0
\end{equation*}%
for any $Q^{o}\in \mathbb{R}\left[ x_{1},\cdots ,x_{n_{\nu }},y_{1},\cdots
,y_{n-n_{\nu }}\right] $.

Let $Q\in \mathbb{R}[x_{1},\cdots ,x_{n},y_{1},\cdots ,y_{n-n_{\nu
}},z_{1},\cdots ,z_{K}]$ be given. Define 
\begin{equation*}
Q^{o}\left( x_{1},\cdots ,x_{n},y_{1},\cdots ,y_{n-n_{\nu }}\right) =Q\left(
x_{1},\cdots ,x_{n},y_{1},\cdots ,y_{n-n_{\nu }},z_{1}^{o},\cdots
,z_{K}^{o}\right) \text{.}
\end{equation*}%
Thus%
\begin{equation*}
Q^{o}\in \mathbb{R}\left[ x_{1},\cdots ,x_{n_{\nu }},y_{1},\cdots
,y_{n-n_{\nu }}\right] \text{,}
\end{equation*}%
hence 
\begin{equation*}
L_{\nu }^{\#}\left( Q^{o}\cdot \vec{P}\right) |_{\left(
x^{o},y^{o},z^{o}\right) }=0\text{.}
\end{equation*}

However, since $L_{\nu }^{\#}$ (see \eqref{7.3.star}) involves no
derivatives in the $z_{1},\cdots ,z_{K}$, we have 
\begin{equation*}
L_{\nu }^{\#}\left( Q\cdot \vec{P}\right) |_{\left( x^{o},y^{o},z^{o}\right)
}=L_{\nu }^{\#}\left( Q^{o}\cdot \vec{P}\right) |_{\left(
x^{o},y^{o},z^{o}\right) }\text{.}
\end{equation*}

Therefore, $L_{\nu }^{\#}\left( Q\cdot \vec{P}\right) |_{\left(
x^{o},y^{o},z^{o}\right) }=0$.

Since $\left( x^{o},y^{o},z^{o}\right) \in V$ and $Q\in \mathbb{R}\left[
x_{1},\cdots ,x_{n_{\nu }},y_{1},\cdots ,y_{n-n_{\nu }},z_{1},\cdots ,z_{K}%
\right] $ are arbitrary, we conclude from \eqref{7.3.5} that $P\in \mathcal{M%
}_{\nu }^{\#\#}$.

Thus $\mathcal{M}_{\nu }^{\#}\subset \mathcal{M}_{\nu }^{\#\#}$, proving %
\eqref{7.3.6} and completing the explanation of Algorithm \ref%
{main-algorithm-for-differential-equations}.
\end{proof}

\bibliographystyle{plain}
\bibliography{papers}

\end{document}